\documentclass[10pt]{amsart}
\usepackage{amsthm,amsmath,amssymb,tikz}
\usetikzlibrary{arrows}
\usetikzlibrary{patterns}
\usepackage{mathcomp}

\title[The genus $g$ series for $(p,q,n)$-dipoles]{A finite generating set for the genus $g$ $(p,q,n)$-dipole series from perturbative Yang-Mills theory}
\author{D. M. Jackson$^1$ and C. A. Sloss$^2$}

\subjclass[2010]{Primary 05A15, Secondary 05E15, 70S15}

\keywords{Join-Cut analysis, dipoles in orientable surfaces, non-central permutation factorization, perturbative Yang-Mills theory}

\theoremstyle{plain}% default 
\newtheorem{thm}{Theorem}[section] 
\newtheorem{lemma}[thm]{Lemma} 
\newtheorem{prop}[thm]{Proposition} 
\newtheorem{cor}[thm]{Corollary}

\newtheorem{defn}[thm]{Definition} 

\theoremstyle{definition} 

\newtheorem{remark}[thm]{Remark}

\newcommand{\bdot}{\text{\textbullet}}
\newcommand{\wdot}{\text{\textopenbullet}}

\newcommand{\rj}{\rho}
\newcommand{\nrj}{\nu}
\newcommand{\dhat}{\widehat{\mathcal{D}}}

\newcommand{\strc}{\eta}

\newcommand{\serA}{A}
\newcommand{\serB}{B}
\newcommand{\serC}{\Gamma}

\newcommand{\localvari}{\zeta}
\newcommand{\localvarii}{\xi}

\parskip=1pt

\thanks{
${\hspace{-1ex}}^1$ Department of Combinatorics and Optimization, University of Waterloo, Waterloo, Ontario, Canada. Partially supported by an NSERC Discovery Grant. \texttt{dmjackson@math.uwaterloo.ca}}

\thanks{
${\hspace{-1ex}}^2$ Department of Combinatorics and Optimization, University of Waterloo, Waterloo, Ontario, Canada. Partially supported by an NSERC Postgraduate Scholarship. \texttt{csloss@theorem.ca}}

\begin{document}

\maketitle

\begin{abstract}

There is an emerging class of permutation factorization questions that cannot be expressed wholly in terms of the centre of the group algebra of the symmetric group. We shall term these \emph{non-central}. A notable instance appears in recent work of Constable \textit{et al.} \cite{ConstableFreedmanHeadrick:2002} in perturbative Yang-Mills theory on the determination of a 2-point correlation function of the Berenstein-Maldacena-Nastase operators by means of Feynman diagrams. In combinatorial terms, this question relates to $(p,q,n)$-\textit{dipoles}:  loopless maps with exactly two vertices and $n$ edges, with two distinguished edges, separated by $p$ edges at one vertex and $q$ edges at the other.

By the introduction of join and cut operators, we construct a formal partial differential equation which uniquely determines a generating series from which the $(p,q,n)$-dipole series may be obtained. 
Moreover, we exhibit a set of functions with the property that the genus $g$ solution to this equation may be obtained recursively as an explicit finite linear combination of these. These functions have explicit expressions as sums indexed by elementary combinatorial objects, and we demonstrate how the recursion can be used to give series solutions for surfaces of low genera.

\end{abstract}

%========================================================================
\section{Introduction}
%========================================================================

The question of enumerating maps (2-cell embeddings) with $n$ edges in orientable surfaces with respect to vertex- and face-degree type may be studied using Tutte's encoding \cite{Tutte:1984} of a map as a pair $(\epsilon, \nu)$, where $\epsilon$ is a fixed point-free involution, $\nu$ is a permutation whose cycle type is the vertex-degree sequence of the map, and $\epsilon\nu$ is a permutation whose cycle type is the face-degree sequence of the map. These sequences (which are topological invariants of the map) index the conjugacy classes of the symmetric group $\mathfrak{S}_{2n}$, so the question of enumerating these maps may be conducted entirely within the centre $Z(2n)$ of the group algebra $\mathbb{C}[\mathfrak{S}_{2n}]$. This is an example of a \emph{central problem}. However, there are several other important permutation factorization problems involving distinguished substructures which are a barrier to centrality, and such problems are called \emph{non-central}. In this paper, we analyze the non-central \emph{$(p,q,n)$-dipole problem} by means of a join-cut analysis. We introduce a combinatorial refinement of the $(p,q,n)$-dipole problem and prove that the generating series for this refinement is the solution to the partial differential equation given in Theorem~\ref{thm:abcd-PDE}. Series solutions for surfaces of small genus are then derived from this equation. These solutions are expressed in terms of a set of functions which are natural to the problem, and which can be expressed as a sum indexed by compositions of a binary string.

\subsection{Loopless dipoles}

A \emph{dipole} is a 2-cell embedding of a graph with exactly two vertices in a locally orientable surface. In this paper, all surfaces are orientable, all dipoles are loopless, and all dipoles are \emph{rooted} by the selection of an edge and a vertex. In diagrams, the root edge will be denoted by an arrow on the edge, directed away from the root vertex. The selection of a root edge and vertex uniquely identifies a \emph{root face}, namely, the face encountered first on a counterclockwise circulation of the root vertex, starting at the root edge. The corner of the root face which is incident with the root edge and vertex is called the \emph{root corner}. These definitions are illustrated in Figure \ref{fig:roots}. Let $\mathcal{D}$ denote the set of dipoles, and let $\mathcal{D}_n$ denote the set of dipoles having $n$ edges. The following notation will be used for $D\in \mathcal{D}$.
\begin{itemize}
\item[-]
$n(D)$ is the number of edges of $D$.
\item[-]
$m(D)$ is the number of faces of $D$.
\item[-]
$g(D)$ is the genus of the embedding surface. 
\end{itemize}
We note that $g(D) = \frac{1}{2}(n(D)-m(D))$ by the Euler-Poincar\'{e} formula.
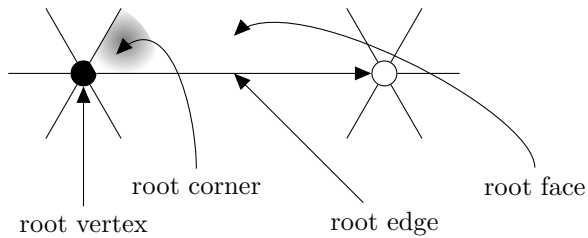
\begin{figure}
\begin{center}
\begin{tikzpicture}[>=triangle 45]
\path (0,0) node[draw,shape=circle, fill=black] (v0) {};
\path (4,0) node[draw,shape=circle,fill=white] (v1){};

\shade [inner color=gray, outer color=white] (v0) -- (1,0) arc (0:60:1cm) -- (v0);

\draw[->] (v0) -- (v1);

\path (v0)+(60:1cm) coordinate (v0a);
\path (v0)+(120:1cm) coordinate (v0b);
\path (v0)+(180:1cm) coordinate (v0c);
\path (v0)+(240:1cm) coordinate (v0d);
\path (v0)+(300:1cm) coordinate (v0e);

\draw (v0) -- (v0a);
\draw (v0) -- (v0b);
\draw (v0) -- (v0c);
\draw (v0) -- (v0d);
\draw (v0) -- (v0e);

\path (v1)+(60:1cm) coordinate (v1a);
\path (v1)+(120:1cm) coordinate (v1b);
\path (v1)+(0:1cm) coordinate (v1c);
\path (v1)+(240:1cm) coordinate (v1d);
\path (v1)+(300:1cm) coordinate (v1e);

\draw (v1) -- (v1a);
\draw (v1) -- (v1b);
\draw (v1) -- (v1c);
\draw (v1) -- (v1d);
\draw (v1) -- (v1e);

\path (0,-2) node (label1) {root vertex};
\draw [->] (label1) -- (v0);

\path (4,-2) node (label2) {root edge};
\draw[->] (label2) -- (2,0);

\path (1.5,-1.5) node (label3) {root corner};
\path (v0)+(30:0.5cm) coordinate (label3end);
\draw[->] (label3) .. controls +(90:1cm) and +(45:1cm) .. (label3end);

\path (6,-1.5) node (label4) {root face};
\path (2,0.5) coordinate (label4end);
\draw[->] (label4) .. controls +(90:1cm) and +(45:1cm) .. (label4end);

\end{tikzpicture}
\end{center}
\caption{The root edge, vertex, face and corner of a loopless dipole.}
\label{fig:roots}
\end{figure}

\subsubsection{$(p,q,n)$-dipoles}
\label{sec:pqn-definition}

A non-central refinement of loopless dipoles, called \emph{$(p,q,n)$-dipoles}, was introduced by Constable \textit{et al.} \cite{ConstableFreedmanHeadrick:2002} in the study of duality between string theory and Yang-Mills theory. It concerns the set $\dhat$ of rooted dipoles with a second distinguished edge, to be called the \emph{secondary edge}, which will be indicated in diagrams by a dashed line. An edge which is neither the root edge nor the secondary edge shall be referred to as an \emph{ordinary edge}.
Given such a dipole, the neighbourhoods of each vertex can be partitioned into four regions as follows:
\begin{center}
\begin{tabular}{cccc}
\begin{tikzpicture}[>=triangle 45]

\shade [inner color=gray, outer color=white]  (0,0) -- ++(215:0.75cm) arc (210:325:0.75cm) -- (0,0);

\path (0,0) node[draw,shape=circle, fill=black] (v0) {};
\path (v0)+(270:2cm) node[draw,shape=circle,fill=white] (v1){};

\draw[->] (v0) .. controls +(210:1.5cm) and +(150:1.5cm) .. (v1);
\draw[dashed] (v0) .. controls +(-30:1.5cm) and +(30:1.5cm) .. (v1);

%\shade (v0) -- ++(215:0.75cm) arc (210:325:0.75cm) -- (v0);

\end{tikzpicture}
& 
\begin{tikzpicture}[>=triangle 45]

\shade [inner color=gray, outer color=white] (0,0) -- ++(210:0.75cm) arc (210:-30:0.75cm) -- (0,0);

\path (0,0) node[draw,shape=circle, fill=black] (v0) {};
\path (v0)+(270:2cm) node[draw,shape=circle,fill=white] (v1){};

\draw[->] (v0) .. controls +(210:1.5cm) and +(150:1.5cm) .. (v1);
\draw[dashed] (v0) .. controls +(-30:1.5cm) and +(30:1.5cm) .. (v1);

%\shade (v0) -- ++(205:0.75cm) arc (205:-25:0.75cm) -- (v0);

\end{tikzpicture} 
&
\begin{tikzpicture}[>=triangle 45]

\shade [inner color=gray, outer color=white] (0,-2) -- ++(30:0.75cm) arc (30:-210:0.75cm) -- (0,-2);

\path (0,0) node[draw,shape=circle, fill=black] (v0) {};
\path (v0)+(270:2cm) node[draw,shape=circle,fill=white] (v1){};

\draw[->] (v0) .. controls +(210:1.5cm) and +(150:1.5cm) .. (v1);
\draw[dashed] (v0) .. controls +(-30:1.5cm) and +(30:1.5cm) .. (v1);

%\shade (v1) -- ++(25:0.75cm) arc (25:-205:0.75cm) -- (v1);

\end{tikzpicture}
&

\begin{tikzpicture}[>=triangle 45]

\shade [inner color=gray, outer color=white] (0,-2) -- ++(35:0.75cm) arc (35:145:0.75cm) -- (0,-2);

\path (0,0) node[draw,shape=circle, fill=black] (v0) {};
\path (v0)+(270:2cm) node[draw,shape=circle,fill=white] (v1){};

\draw[->] (v0) .. controls +(210:1.5cm) and +(150:1.5cm) .. (v1);
\draw[dashed] (v0) .. controls +(-30:1.5cm) and +(30:1.5cm) .. (v1);

%\shade (v1) -- ++(35:0.75cm) arc (35:145:0.75cm) -- (v1);

\end{tikzpicture}

\\
Region 1 & Region 2 & Region 3 & Region 4
\end{tabular}
\end{center}

\noindent(In these diagrams, the ordinary edges are suppressed.) The partition of the neighbourhoods of the root vertices into these four regions permits the following definition to be made.
\begin{defn}[Root jump and Non-root jump]
Let $D$ be a rooted dipole with a secondary edge. The \emph{root jump} (resp. \emph{non-root jump}) of $D$, denoted by $\rj(D)$ (resp. $\nrj(D)$), is one plus the number of edges intersecting the interior of Region 1 (resp. Region 3). 
\end{defn}
\noindent We are now in a position to define the main problem of this paper.
\begin{defn}[$(p,q,n)$-dipole problem]
A dipole $D$ with $n$ edges for which $\rj(D)=p$ and $\nrj(D)=q$ is referred to as a \emph{$(p,q,n)$-dipole}. 
\label{defn:pqn-dipole}
The \emph{$(p,q,n)$-dipole problem} is the problem of determining the number of $(p,q,n)$-dipoles in an orientable surface of genus $g$. 
\label{defn:pqn-dipole}
\end{defn}

Maps arise in a physical context as embeddings of Feynman diagrams. Constable \textit{et al}. \cite{ConstableFreedmanHeadrick:2002} were concerned with the free two-point functions of the Berenstein-Maldacena-Nastase operators. These functions can be obtained from the $(p,q,n)$-dipole series
\[
\Phi := \sum_{D\in \dhat}r^{\rj(D)}s^{\nrj(D)} t^{n(D)}u^{2g(D)},
\]
where $r$, $s$, $t$ and $u$ are indeterminates. 

Asymptotic expressions for the torus and double torus were given by Constable \textit{et al.} \cite{ConstableFreedmanHeadrick:2002}, and exact expressions for the coefficients of $\Phi$ in these cases were given by Visentin and Wieler~\cite{VisentinWieler:2007}.

\subsection{Centrality and Non-centrality}
\label{sec:centrality-and-non-centrality}

Historically, use of the symmetric group algebra $\mathbb{C}[\mathfrak{S}_n]$ has proven to be an effective approach to map enumeration problems. Although the present paper does not apply this approach to the $(p,q,n)$-dipole problem, it provides a context in which the complications of the $(p,q,n)$-dipole problem may be understood.

Map enumeration problems, such as the loopless dipole problem, can be studied using the centre $Z(n)$ of $\mathbb{C}[\mathfrak{S}_n]$. For maps in general, this is done using Tutte's \cite{Tutte:1984} encoding of a map as a rotation system. Although Tutte's encoding does not exclude loops, Kwak and Lee's \cite{Kwak:1992} adaptation of it does in the case of dipoles, encoding the latter as products of two full cycles. For a partition $\lambda$ of $n$, let $K_{\lambda} = \sum_{\sigma\in \mathcal{C}_{\lambda}} \sigma$, where $\mathcal{C}_{\lambda}$ is the conjugacy class in $\mathfrak{S}_n$ consisting of all permutations of cycle type $\lambda$. The loopless dipole problem is equivalent to determining $(n-1)!^{-1} K_{(n)}^2$ in $Z(n)$, and is therefore \emph{central} since it may be done using the character theory of the symmetric group. The set $\{K_{\lambda}\}_{\lambda\vdash n}$ is a basis for $Z(n)$, and thus the solution to any central problem is necessarily a class function. 

In contrast, the $(p,q,n)$-dipole problem is \emph{non-central} since it cannot be solved by central methods because the weight functions $\rj$ and $\nrj$ are not class functions. Thus, it is an example of a \emph{non-central} problem. Centralizers of $\mathbb{C}[\mathfrak{S}_n]$, such as
\[
Z_k(n) := \{ g \in \mathbb{C}[\mathfrak{S}_n] : \pi g \pi^{-1} = g \text{ for all } \pi \in \mathfrak{S}_{n-k}\}
\]
provide an algebraic context for studying non-central problems, with the non-negative integer $k$ providing a measure of non-centrality. For prescribed values of $p, q,$ and $n$, the set of $(p,q,n)$-dipoles may be encoded in $Z_2(n)$ by the element
\[
\sum_{\substack{\sigma_1\in \mathcal{C}_{(n)} \\ \sigma_1^q(n)=n-1}} \sigma_1 \sum_{\substack{\sigma_2\in \mathcal{C}_{(n)} \\ \sigma_2^p(n)=n-1}} \sigma_2.
\]
Since this does not lie in $Z(n)$, the usual character-theoretic techniques for map enumeration may not be applied. Consequently, in this paper we apply to the $(p,q,n)$-dipole problem a join-cut approach, which is not based on character theory. 

\subsection{Join-cut Analysis}

Join-cut analysis is an algebraic-combinatorial approach, applied to a set $\mathcal{S}$ of combinatorial objects, that requires the following:

\noindent \textbf{(1)}
the characterization of a substructure in $\sigma\in \mathcal{S}$ and parts of $\sigma$ such that the deletion (or, dually, addition) of the substructure either \emph{cuts} some part of $\sigma$ into two parts or \emph{joins} two parts of $\sigma$ into a single part; \\
\noindent \textbf{(2)}
the realization of the combinatorial cut and join operations as weight-preserving differential operators acting on the generating series, which in turn leads to a partial differential equation for the generating series; \\
\noindent \textbf{(3)}
and a means of analyzing the partial differential equation in order to deduce combinatorial information about $\mathcal{S}$. Of course, a full solution to the equation is the ultimate goal.

Join-cut analysis has been applied to many combinatorial problems such as the enumeration of factorizations into transpositions \cite{Goulden:1994} and the enumeration of ramified covers of the sphere (see, for example, \cite{GouldenJacksonVakil:2005}). Notably, these are all examples of problems which may, alternatively, be studied using central methods. The join-cut approach, however, does not explicitly rely on character theory and thus does not require the weight functions being studied to be class functions. Thus, it is a natural approach to adopt in studying the $(p,q,n)$-dipole problem since it is not impeded by the non-centrality of the problem. 

\begin{remark}
Many of these problems contain a connectivity condition which, in the character-based approach, is enforced algebraically by taking the logarithm of the generating series. This results in character sums that are difficult to resolve. On the other hand, the join-cut approach has the additional benefit of enforcing connectivity directly, at the combinatorial level. This is not a concern with the enumeration of loopless dipoles, since connectivity is forced. \hfill $\vert$
\end{remark}

Our approach is a synthesis of two ideas arising in previous applications of join-cut analysis. The first is Kwak and Shim's \cite{KwakShim:2002} argument that adding an edge to a loopless dipole results in either a face being cut into two faces of smaller degree, or two faces being joined into a face of larger degree. Our strategy is to introduce a refined marking of bi-rooted dipoles which allows us to track how $\theta$ and $\vartheta$ change when an edge is added to a bi-rooted dipole. The second idea is the introduction of additional indeterminates to record ``non-central'' information about this marking, in a manner similar to that used by Goulden and Jackson \cite{GouldenJackson:2007} in the enumeration of transitive powers of Jucys-Murphy elements.

\subsection{Organization of the Paper}

In Section \ref{sec:join-cut-operators}, we introduce a refinement of the $(p,q,n)$-dipole problem, called the \emph{$(a,b,c,d)$-dipole problem}, which is amenable to a join-cut approach, and from which the solution to the $(p,q,n)$-dipole problem may be recovered. We give an encoding for $(a,b,c,d)$-dipoles such that the generating series is determined by two partial differential equations (Lemma \ref{lemma:ab00-PDE} and Theorem \ref{thm:abcd-PDE}). Section \ref{sec:genus-recursion} describes a process, recursive in genus, for determining the solution to these equations for a given orientable surface. We identify a family of functions $\mathcal{F}$ such that solutions can be written as a linear combination of elements of $\mathcal{F}$, and give expressions for these functions as a sums indexed by combinatorial objects (Theorems \ref{thm:tau-nonrecursive} and \ref{thm:sigma-nonrecursive}). Section \ref{sec:small-genus-solutions} applies the results of Section \ref{sec:genus-recursion} to give series solutions on surfaces of small genus (Lemma \ref{lemma:ab00-on-torus}, Lemma \ref{lemma:ab00-double-torus}, and \ref{thm:abcd-torus}), and contains the lemmas needed to extract coefficients from these series. 

%========================================================================
\section{Join-cut Operators For the $(p,q,n)$-dipole Problem}
\label{sec:join-cut-operators}
%========================================================================

%========================================================================
\subsection{A refinement of the problem: $(a,b,c,d)$-dipoles}
%========================================================================

We consider the effect on the jumps $\rj(D)$ and $\nrj(D)$ of $D$ by the insertion of an edge $e$. This may be characterized by the regions through which $e$ approaches the two vertices, as shown in Figure  \ref{figure:edge-types}. This classifies $e$ into one of four edge types, which are defined in Table \ref{table:edge-types}, together with the increments in $\rj$ and $\nrj$.
\begin{center}
\begin{table}
\begin{tabular}{cccc}
edge type & ends through regions & increment in $\rj$ & increment in $\nrj$ \\
\hline
$a$ & 2, 3 & 0 & 1 \\
$b$ & 1, 3 & 1 & 1 \\
$c$ & 2, 4 & 0 & 0 \\
$d$ & 1, 4 & 1 & 0
\end{tabular}
\caption{Classification of four edge types in a bi-rooted dipole.}
\label{table:edge-types}
\end{table}
\end{center}

Let $\alpha(D),\beta(D),\gamma(D)$ and $\delta(D)$ denote the number of $a$-, $b$-, $c$-, and $d$-edges of $D$, respectively. The jumps of a dipole can be recovered from this information by
\[
\rj(D) = \beta(D)+\delta(D) + 1
\quad \text{ and } \quad
\nrj(D) = \alpha(D)+\beta(D) + 1.
\]
Moreover, the total number of edges in a dipole is
\[
n(D) = \alpha(D)+\beta(D)+\gamma(D)+\delta(D)+2.
\]

\begin{figure}
\begin{center}
\begin{tabular}{cccc}
\begin{tikzpicture}[>=triangle 45]

\path (0,0) node[draw,shape=circle, fill=black] (v0) {};
\path (v0)+(270:2cm) node[draw,shape=circle,fill=white] (v1){};

\draw[->] (v0) .. controls +(195:1.5cm) and +(165:1.5cm) .. (v1);
\draw[dashed] (v0) .. controls +(-15:1cm) and +(15:1cm) .. (v1);
\draw[ultra thick] (v0) .. controls +(0:1.5cm) and +(0:1.5cm) .. (v1);

\end{tikzpicture}
& 
\begin{tikzpicture}[>=triangle 45]

\path (0,0) node[draw,shape=circle, fill=black] (v0) {};
\path (v0)+(270:2cm) node[draw,shape=circle,fill=white] (v1){};

\draw[->] (v0) .. controls +(195:1.5cm) and +(165:1.5cm) .. (v1);
\draw[dashed] (v0) .. controls +(-15:1cm) and +(15:1cm) .. (v1);
\draw[ultra thick] (v0) .. controls +(270:1.5cm) and +(0:1.5cm) .. (v1);

\end{tikzpicture}
&
\begin{tikzpicture}[>=triangle 45]

\path (0,0) node[draw,shape=circle, fill=black] (v0) {};
\path (v0)+(270:2cm) node[draw,shape=circle,fill=white] (v1){};

\draw[->] (v0) .. controls +(195:1.5cm) and +(165:1.5cm) .. (v1);
\draw[dashed] (v0) .. controls +(-15:1cm) and +(15:1cm) .. (v1);
\draw[ultra thick] (v0) .. controls +(0:1.5cm) and +(90:1.5cm) .. (v1);

\end{tikzpicture}&
\begin{tikzpicture}[>=triangle 45]

\path (0,0) node[draw,shape=circle, fill=black] (v0) {};
\path (v0)+(270:2cm) node[draw,shape=circle,fill=white] (v1){};

\draw[->] (v0) .. controls +(195:1.5cm) and +(165:1.5cm) .. (v1);
\draw[dashed] (v0) .. controls +(-15:1cm) and +(15:1cm) .. (v1);
\draw[ultra thick] (v0)--(v1);

\end{tikzpicture} \\
$a$-edge & $b$-edge & $c$-edge & $d$-edge
\end{tabular}
\end{center}
\caption{Classification an ordinary edge by the regions through which it is incident with the two vertices.}
\label{figure:edge-types}
\end{figure}
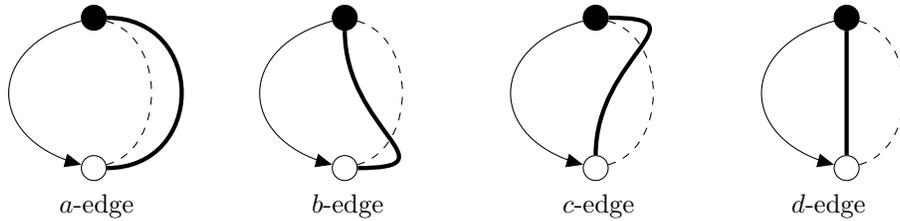

To enumerate dipoles with respect to the number of $a$-, $b$-, $c$-, and $d$-edges, mark the corners of the dipole $D$ in Region 1 with a black dot, $\bdot$, and the corners in Region 2 with a white dot, $\wdot$. Recall, from Section \ref{sec:pqn-definition}, that these are complementary regions whose union is an open neighbourhood of the root vertex. The marking of corners of the faces in this manner plays an important role in Theorem \ref{thm:abcd-PDE}, where it allows us to differentiate between $c$-edges and $d$-edges. This marking associates a unique binary string $R(D)$ to the root face, namely, the string encountered during a counterclockwise boundary walk of the root face, starting at the root corner. Throughout this paper we use the notational convention that for a binary string $R$, the $i^{\text{th}}$ symbol of $R$ is denoted by $R_i$. 

Unlike the root face, the non-root faces do not have a canonical corner to start a boundary walk, so they must be considered up to cyclic equivalence. We therefore define the following.
\begin{defn}[Cyclic binary string]
Given an element $S\in \{\bdot,\wdot\}^*$, let $(S)$ denote the multiset of cyclic shifts of $S$. Such a set shall be referred to as a \emph{cyclic binary string}. The notation $\mathcal{S}(\bdot, \wdot)$ denotes the set of all cyclic binary strings on the symbols $\bdot$ and $\wdot$.
\end{defn}
For example, the cyclic shifts of $\wdot\bdot\wdot\bdot$ are
\[
\wdot\bdot\wdot\bdot, \bdot\wdot\bdot\wdot, \wdot\bdot\wdot\bdot, \text{ and } \bdot\wdot\bdot\wdot, 
\]
and thus 
\[
(\wdot\bdot\wdot\bdot) = (\bdot\wdot\bdot\wdot) = \{ 2 \wdot\bdot\wdot\bdot, 2 \bdot\wdot\bdot\wdot\}.
\]
Each non-root face of a dipole may be associated with the cyclic binary string encountered in a counterclockwise boundary walk of the face. 

Let $\Lambda(D)$ denote the multiset of cyclic binary strings corresponding to the non-root faces of $D$.
We use two infinite classes of indeterminates to record the information associate with this marking: $\{g_S : S\in \{\bdot,\wdot\}^*\}$, indexed by binary strings, and $\{f_{(S)} : (S)\in \mathcal{S}(\bdot,\wdot)\}$, indexed by cyclic binary strings. Let $f_{\Lambda(D)} := \prod_{(S)\in \Lambda(D)} f_{(S)}$. A birooted dipole $D$ is encoded by the monomial $g_{R(D)} f_{\Lambda(D)}$. 

This encoding is illustrated using the toroidal bi-rooted dipole shown in Figure \ref{figure:marking-example}, in which the root face is gray. It is encoded by either of the (equivalent) monomials $g_{\bdot\wdot} f_{(\bdot\wdot)}$ or $g_{\bdot\wdot} f_{(\wdot\bdot)}$. The monomials $g_{\wdot\bdot}f_{(\bdot\wdot)}$ and $g_{\wdot\bdot}f_{(\wdot\bdot)}$ do \emph{not} encode this dipole, since the root edge induces a canonical ``starting point'' for reading the symbols on a boundary walk of the root face.

\begin{figure}
\begin{center}
\begin{tikzpicture}[>=triangle 45,scale=0.75]

%\fill[gray] (3,-1.5) -- (6,-3) -- (6,-6) -- (3,-4.5);
%\fill[gray] (0,-3) -- (0,-6) -- (3,-6) -- (3,-4.5);
%\fill[gray] (0,0) -- (3,0) -- (3,-1.5);

\path (0,0) coordinate (NW);
\path (6,0) coordinate (NE);
\path (0,-6) coordinate (SW);
\path (6,-6) coordinate (SE);

\path (3,-1.5) coordinate (rv);
\path (3,-4.5) coordinate (nrv);
\path (3,-3) coordinate (middle);

\path (3,0) coordinate (N);
\path (3,-6) coordinate (S);
\path (0,-3) coordinate (W);
\path (6,-3) coordinate (E);

\path(4,0) coordinate (topside);
\path(0,-2) coordinate (leftside);
\path(4,-6) coordinate (bottomside);
\path(6,-2) coordinate (rightside);

\fill[gray] (rv) -- (E) -- (SE) -- (nrv);
\fill[gray] (nrv) -- (S) -- (SW) -- (W);
\fill[gray] (rv) -- (N) -- (NW);

%\draw[dashed] (NW) -- (topside) -- (NE) -- (SE) -- (SW) -- (NW);

\begin{scope} [>=to]
\draw[densely dotted, ->] (NW) -- (topside);
\draw[densely dotted] (topside) -- (NE) -- (rightside);
\draw[densely dotted, ->] (SW) -- (bottomside);
\draw[densely dotted, ->>] (SW) -- (leftside);
\draw[densely dotted, ->>] (SE) -- (rightside);
\draw[densely dotted] (bottomside) -- (SE);
\draw[densely dotted] (leftside) -- (NW);
\end{scope}

\path (nrv) ++ (0,0.2) coordinate (arrowend);
\draw[->] (rv) -- (arrowend);

\draw (rv) -- (E);
\draw [dashed] (rv) -- (N);
\draw (rv) -- (NW);

\draw (nrv) -- (W);
\draw [dashed] (nrv) -- (S);
\draw (nrv) -- (SE);

\fill[black] (rv)+(30:0.5cm) circle(0.1cm);
\fill[white] (rv)+(120:0.5cm) circle(0.1cm);
\fill[white] (rv)+(210:0.5cm) circle(0.1cm);
\draw (rv)+(120:0.5cm) circle(0.1cm);
\draw (rv)+(210:0.5cm) circle(0.1cm);
\fill[black] (rv)+(300:0.5cm) circle(0.1cm);

\fill[black] (rv) circle(0.2cm);
\fill[white] (nrv) circle(0.2cm);
\draw (nrv) circle(0.2cm);

\end{tikzpicture}
\end{center}
\caption{Example of a bi-rooted dipole, embedded in the torus, in which the corners incident to the root vertex are marked.}
\label{figure:marking-example}
\end{figure}
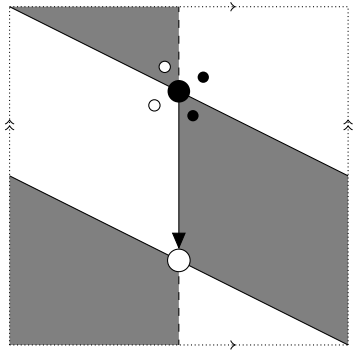

%========================================================================
\subsection{The generating series for $(a,b,c,d)$-dipoles}
%========================================================================

We may now specify a generating series for $(a,b,c,d)$-dipoles which admits a join-cut analysis. It is a three-stage process in which we introduce classes of edges incrementally to build up classes of dipoles of increasing complexity. 

\noindent \textbf{(1)}
First, enumerate $(a,0,0,0)$-dipoles, which can be regarded as rooted dipoles with a ``doubled'' root edge. This is a central problem. The $(a,0,0,0)$-dipole series $\serA$ is defined by
\[
\serA := g_{\bdot} \sum_{D\in \mathcal{D}} \frac{x^{n(D)}}{n(D)!} u^{2g(D)} f_{\lambda(D)},
\]
where $\lambda(D) = (\wdot^{\lambda_1},\wdot^{\lambda_2},\ldots,\wdot^{\lambda_{m(\lambda)}})$ when $D$ has face-degree sequence $(2\lambda_1,2\lambda_2,\ldots,2\lambda_{m(\lambda)})$, is known. 

\noindent \textbf{(2)}
By considering the adjoining of $b$-edges, obtain a differential equation for the $(a,b,0,0)$-dipole series $\serB'$ defined by
\[
\serB:= \sum_{\substack{D\in \dhat, \\ \gamma(D)=\delta(D)=0}} \frac{x^{\alpha(D)+1}}{(\alpha(D)+1)!} \frac{y^{\beta(D)}}{\beta(D)!}u^{2g(D)} g_{R(D)} f_{\Lambda(D)}.
\]
whose initial condition, at $y=0$, is the $(a,0,0,0)$-dipole series $\serA$.

\noindent \textbf{(3)}
We next consider the addition of edges having an end in Region 4, \textit{i.e.} edges which are either a $c$- or a $d$-edge. The need for a refined marking is now clear, since the symbols $\wdot$ and $\bdot$ specify whether a new edge is a $c$-edge or a $d$-edge. We obtain a differential equation for the $(a,b,c,d)$-dipole series $\serC$ defined by
\[
\serC := \sum_{D\in \dhat} \frac{x^{\alpha(D)+1}}{(\alpha(D)+1)!} \frac{y^{\beta(D)}}{\beta(D)!}\frac{v^{\gamma(D)+\delta(D)}}{(\gamma(D)+\delta(D))!} w^{\delta(D)}u^{2g(D)} g_{R(D)} f_{\Lambda(D)}, 
\]
whose initial condition, at $v=0$, is the $(a,b,0,0)$-dipole series $\serB$.

The $(p,q,n)$-dipole series may be obtained by evaluating the series $\serC$ at $f_{(R)}=1$ and $g_R=1$ for all $R\in \{\bdot,\wdot\}^*$ to ``forget'' the information recorded by the marking of face corners incident with the root vertex, and summing the coefficients corresponding to specified values of $p$ and $q$. We therefore introduce the following notation.
\begin{defn}
For a formal power series $F$ whose coefficients are polynomials in the $f$- and $g$-type indeterminates, let $\langle F\rangle$ denote the series obtained setting $g_R=1$ for all $R \in \{\bdot,\wdot\}^*$, and $f_{(S)}=1$ for all $(S) \in \mathcal{S}(\bdot,\wdot)$.
\end{defn}

\begin{remark}
It is worth noting that, for combinatorial reasons, all series $F$ appearing in this paper satisfy the requirement that $\langle F \rangle$ is well-defined. Indeed, such series are linear in the $g$-type indeterminates, and the sum of degrees of $f$-type indeterminates in a monomial encoding a dipole $D$ is bounded by $\alpha(D)+\beta(D)+\gamma(D)+\delta(D)+1$. \hfill $\vert$
\end{remark}
With the introduction of this notation, the solution to the $(p,q,n)$-dipole problem may be obtained from the solution to the $(a,b,c,d)$-dipole problem as follows.
\begin{prop}
The coefficients of the generating series for $(p,q,n)$-dipoles are given by
\[
[r^ps^qt^nu^{2g}] \Phi = \sum_{0\leq b \leq p-1}  \left[\frac{v^{n-q-1}}{(n-q-1)!}\frac{y^b}{b!} w^{p-1-b}\frac{x^{q-b}}{(q-b)!}u^{2g}\right] \langle \serC \rangle,
\]
where $\serC$ is the $(a,b,c,d)$-dipole series. 
\label{prop:abcd-to-pqn}
\end{prop}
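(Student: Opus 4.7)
The plan is to verify the identity by a direct coefficient comparison, exploiting the fact that $\langle \serC \rangle$ is a well-defined formal power series in $x, y, v, w, u$ alone (by the remark immediately above), in which each $D \in \dhat$ contributes exactly one monomial whose exponents record the edge-type counts $\alpha(D), \beta(D), \gamma(D), \delta(D)$ and the genus $g(D)$. First I would apply the three identities
\[
\rj(D) = \beta(D) + \delta(D) + 1, \quad \nrj(D) = \alpha(D) + \beta(D) + 1, \quad n(D) = \alpha(D) + \beta(D) + \gamma(D) + \delta(D) + 2
\]
derived in Section~\ref{sec:join-cut-operators} to convert the conditions $\rj(D)=p$, $\nrj(D)=q$, $n(D)=n$, $g(D)=g$ into a linear system in the four edge-type counts. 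Parametrising by $b := \beta(D)$, this system has the unique solution
\[
\alpha(D) = q - 1 - b, \quad \beta(D) = b, \quad \gamma(D) = n - p - q + b, \quad \delta(D) = p - 1 - b,
\]
so the set of $(p,q,n)$-dipoles of genus $g$ decomposes, according to the value of $b$, as a disjoint union of sets $\mathcal{E}_b$ of bi-rooted dipoles realising these four counts at genus $g$. Non-negativity of $\alpha, \gamma, \delta$ confines the non-empty $\mathcal{E}_b$ to $\max(0, p+q-n) \le b \le \min(p-1, q-1)$.

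Next I would read off the coefficient appearing inside the summation on the right-hand side. After applying $\langle \cdot \rangle$, each $D \in \dhat$ contributes the single monomial
\[
\frac{x^{\alpha(D)+1}}{(\alpha(D)+1)!} \frac{y^{\beta(D)}}{\beta(D)!} \frac{v^{\gamma(D)+\delta(D)}}{(\gamma(D)+\delta(D))!} w^{\delta(D)} u^{2g(D)}
\]
to $\langle \serC \rangle$. The factorials in these denominators match exactly the ones prescribed by the coefficient operator on the right-hand side, so the $b$-th coefficient on the right equals $|\mathcal{E}_b|$. Summing over $0 \le b \le p-1$ and noting that the terms outside $\max(0,p+q-n) \le b \le \min(p-1,q-1)$ vanish (for $b > q-1$ the requested power of $x$ is non-positive while $\alpha+1 \ge 1$; for $b < p+q-n$ the set $\mathcal{E}_b$ is empty) recovers the total count $[r^p s^q t^n u^{2g}]\Phi$.

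No serious obstacle arises; the only bookkeeping point is that $(\gamma, \delta)$ must be recovered from the right-hand monomial, and this works because $v$ tracks $\gamma + \delta$ while $w$ tracks $\delta$ separately, so $\gamma$ is determined by their difference. Once this is verified, the proposition follows from the disjoint-union decomposition of $(p,q,n)$-dipoles indexed by $b$.
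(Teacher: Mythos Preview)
Your argument is correct and follows the same route as the paper: you invoke the three identities $\rj=\beta+\delta+1$, $\nrj=\alpha+\beta+1$, $n=\alpha+\beta+\gamma+\delta+2$, parametrise by $b=\beta(D)$, and read off the matching monomial in $\langle\serC\rangle$. The paper's own proof is a one-sentence version of exactly this computation, so there is no substantive difference in approach.
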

\begin{proof}
Since $\gamma(D) + \delta(D) = n(D)-\nrj(D)-1$, $\rj(D) = \beta(D)+\delta(D)+1$, and $\alpha(D)= \nrj(D)-\beta(D)-1$, the number of $(p,q,n)$-dipoles may be recovered from the $(a,b,c,d)$-dipole series as the given sum of coefficients. 
\end{proof}

%========================================================================
\subsection{The initial condition: $(a,0,0,0)$-dipoles}
%========================================================================

Before proceeding with this strategy, we first make note of two useful expressions for $\serA$. The first follows directly from the definitions:
\begin{equation}
\serA = g_{\bdot} \sum_{n,m \geq 1} \frac{x^n}{n!} u^{n-m} \!\!\!\!\!\!\!\! \sum_{\substack{\lambda\vdash n \\ \lambda \text{ has } m \text{ parts }}}  \psi_{\lambda} f_{\lambda}. 
\label{eqn:Psi-form-2}
\end{equation}
where $\psi_{\lambda}$ is the number of rooted dipoles with face-degree sequence $2\lambda$. The following explicit expressions for $\psi_{\lambda}$ and $\serA$ are a standard result in the theory of map enumeration.
\begin{lemma}
The number of rooted dipoles with face-degree sequence $2\lambda$ is 
\begin{equation}
\label{eqn:psi-expression}
\psi_{\lambda} = \frac{1}{n} |\mathcal{C}_{\lambda}|(n-1)! \sum_{0\leq k \leq n-1} \binom{n-1}{k}^{-1} [y^k] (1+y)^{-1} \prod_{1\leq i\leq m} (1- (-y)^{\lambda_i}),
\end{equation}
and their generating series is given by
\begin{equation}
\label{eqn:Psi-closed}
\serA = g_{\bdot} L\frac{ e^{\sum_{i\geq 1} \frac{x^iu^{i-1}}{i}f_{(\wdot^i)}(1-(-y)^i)}-1}{1+y},
\end{equation}
where $L$ is the linear transformation defined by
\[
L: f(x,y) \longmapsto \int_0^1s^{-1} f\left(xs,\frac{1-s}{s}\right)ds.
\]
\end{lemma}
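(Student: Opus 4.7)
The plan is to prove the two claims sequentially: first the explicit formula \eqref{eqn:psi-expression} for $\psi_\lambda$ via the character theory of $\mathfrak{S}_n$, then the closed form \eqref{eqn:Psi-closed} by summing the result over all partitions and recognizing a Beta integral inside the operator $L$.

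First I would interpret $\psi_\lambda$ using the Kwak--Lee encoding of a rooted loopless dipole of face-degree sequence $2\lambda$ as an ordered pair $(\sigma_1,\sigma_2)$ of full $n$-cycles in $\mathfrak{S}_n$ with $\sigma_1\sigma_2$ of cycle type $\lambda$ (each cycle of length $\lambda_i$ tracing a face of degree $2\lambda_i$). The number of such pairs is $|\mathcal{C}_\lambda|$ times the structure constant of $K_\lambda$ in $K_{(n)}^2$, which by Frobenius's formula is
\[
\frac{|\mathcal{C}_{(n)}|^2\,|\mathcal{C}_\lambda|}{n!}\sum_{\chi\vdash n}\frac{\chi_{(n)}^2\chi_\lambda}{\chi_{(1^n)}}.
\]
Because $\chi^\mu_{(n)}$ vanishes outside the hook shapes $\mu=(n-k,1^k)$, where $\chi^{(n-k,1^k)}_{(n)}=(-1)^k$ and $\chi^{(n-k,1^k)}_{(1^n)}=\binom{n-1}{k}$, the sum collapses to one indexed by $k$. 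The values $\chi^{(n-k,1^k)}_\lambda$ are assembled by the classical Murnaghan--Nakayama generating function identity
\[
\sum_{k\geq 0}\chi^{(n-k,1^k)}_\lambda\,y^k = (1+y)^{-1}\prod_{1\leq i\leq m}(1-(-y)^{\lambda_i}),
\]
and combining this with $|\mathcal{C}_{(n)}|=(n-1)!$ gives \eqref{eqn:psi-expression}.

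For \eqref{eqn:Psi-closed}, I would substitute the expression for $\psi_\lambda$ back into \eqref{eqn:Psi-form-2} and interchange the sums. The sum over partitions is resolved by the classical power-sum exponential identity
\[
\sum_\lambda \frac{1}{z_\lambda}\prod_j p_{\lambda_j} = \exp\!\Bigl(\sum_{i\geq 1}\frac{p_i}{i}\Bigr),
\]
applied with $p_i = x^i u^{i-1} f_{(\wdot^i)}(1-(-y)^i)$ and $|\mathcal{C}_\lambda|/n!=1/z_\lambda$; the factor $u^{i-1}$ per part correctly assembles into $u^{n-m}=u^{2g}$ over a partition with $m$ parts. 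The weight $\binom{n-1}{k}^{-1}/n$ that remains is exactly what the operator $L$ produces: the Beta integral
\[
\int_0^1 s^{n-k-1}(1-s)^k\,ds = \frac{k!(n-k-1)!}{n!} = \frac{1}{n\binom{n-1}{k}}
\]
yields $L[x^n y^k] = x^n/(n\binom{n-1}{k})$, and the ``$-1$'' in the numerator of \eqref{eqn:Psi-closed} removes the empty-partition contribution absent from $\mathcal{D}$.

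The main technical step is the hook-character generating function identity, whose verification uses the Murnaghan--Nakayama rule restricted to rim-hook fillings of hook shapes; this is a standard computation available in Macdonald. The remainder of the argument is bookkeeping, but care is needed in tracking the normalizations $|\mathcal{C}_\lambda|, z_\lambda, (n-1)!,$ and $\binom{n-1}{k}^{-1}$ through the computation and in commuting $L$ with infinite summations (justified since only finitely many $\lambda\vdash n$ contribute at each power of $x$).
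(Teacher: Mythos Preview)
Your proposal is correct and follows essentially the same approach as the paper's own proof: the Kwak--Lee encoding as pairs of full $n$-cycles, character theory restricted to hook shapes, and the Beta integral identity $\binom{n-1}{k}^{-1}=n\int_0^1 s^{n-k-1}(1-s)^k\,ds$ to pass from \eqref{eqn:psi-expression} to \eqref{eqn:Psi-closed}. The paper's proof is deliberately terse (it just invokes ``standard results from the character theory of the symmetric group'' and the Beta integral), whereas you have supplied the specific ingredients---Frobenius's formula, the hook-character generating function via Murnaghan--Nakayama, and the power-sum exponential---that make the argument explicit.
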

\begin{proof}
Using the encoding of rooted dipoles as a pair of full cycle permutations (see Section \ref{sec:centrality-and-non-centrality}), standard results from the character theory of the symmetric group may be used to derive Equation (\ref{eqn:psi-expression}). The beta integral $\binom{n-1}{k}^{-1} = n \int_0^1 s^{n-k-1}(1-s)^k ds$ 
allows us to write $\serA$ in the form given in Equation (\ref{eqn:Psi-closed}).
\end{proof}

%========================================================================
\subsection{Join and Cut operators for $(a,b,0,0)$-dipoles}
%========================================================================

The following theorem gives a formal partial differential equation for the $(a,b,0,0)$-dipole series $\serB$.
\begin{lemma}
The $(a,b,0,0)$-dipole series $\serB$ is the unique solution to the partial differential equation 
\[
(C' + u^2 J') \serB = \frac{\partial \serB}{\partial y},
\]
where
\begin{align*}
C' &:= \sum_{R\in \{\wdot,\bdot\}^*} \left( \sum_{2\leq i \leq \ell(R)} g_{R_1\cdots R_i} f_{(R_1R_{i+1}\cdots R_{\ell(R)})} \right) \frac{\partial}{\partial g_R}, \\
J' &:=  \sum_{R\in \{\wdot,\bdot\}^*} \sum_{(S) \in \mathcal{S}(\wdot,\bdot)} \left(\sum_{S\in (S)} g_{RR_1 S} \right) \frac{\partial^2}{\partial g_R \partial f_{(S)}},
\end{align*}
and the initial condition is
\[
\serB|_{y=0} = \serA,
\]
where $\serA$ is given by (\ref{eqn:Psi-closed}).
\label{lemma:ab00-PDE}
\end{lemma}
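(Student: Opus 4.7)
The plan is to perform a standard join-cut analysis. We interpret $\partial_y \serB$ combinatorially as the generating series enumerating pairs $(D', e)$, where $D'$ is a $\serB$-dipole and $e$ is a new $b$-edge inserted into $D'$ under the canonical convention that $e$ becomes the first $b$-edge encountered counterclockwise from the root edge at the root vertex; equivalently, each $\serB$-dipole $D$ with $\beta(D) \geq 1$ corresponds to the unique pair $(D' = D - e_\star, \text{insertion position of } e_\star)$, where $e_\star$ is the first $b$-edge of $D$. Under this convention the root-vertex end of $e$ is placed in the unique slot immediately CCW of the root edge, which lies in the root face of $D'$, while the non-root-vertex end may be placed in any of the $\alpha(D') + \beta(D') + 1$ slots in Region 3 at the non-root vertex.

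A crucial structural observation is that for any $\serB$-dipole, the root face always contains the unique Region 4 corner at the non-root vertex; this holds because the absence of $c$- and $d$-edges means Region 4 contains no ordinary edges, so the boundary walk of the root face, after entering the non-root vertex via the secondary edge, immediately exits via the root edge. Consequently the root face has exactly $\ell(R) - 1$ non-root corners in Region 3, and the $\alpha + \beta + 1$ insertion slots decompose into exactly $\ell(R) - 1$ slots within the root face (yielding \emph{cuts}, with genus unchanged) and $\alpha + \beta + 2 - \ell(R) = \sum_{(S)} \ell(S)$ slots within non-root faces (yielding \emph{joins} through a handle, with the genus increasing by one and producing the $u^2$ factor). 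For a cut, parametrising by $i$ with $2 \leq i \leq \ell(R)$ and tracing the modified boundary walk yields a new root face $R_1 R_2 \ldots R_i$ and a new non-root face $(R_1 R_{i+1} \ldots R_{\ell(R)})$, matching $C'$; for a join into a non-root face $(S)$, the insertion position at the non-root vertex is in bijection with the cyclic shifts $S$ of the representative of $(S)$, and the merged root face has encoding $R R_1 S$, matching $u^2 J'$. In each new encoding the appearing $R_1 = \bdot$ is a newly introduced $\bdot$-corner produced by the split of the old root corner by $e$.

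The initial condition $\serB|_{y = 0} = \serA$ follows immediately upon setting $y = 0$, which eliminates all terms with $\beta \geq 1$ and leaves exactly the $(a,0,0,0)$-dipoles. Uniqueness then follows by induction on the $y$-degree: the PDE determines the coefficient of $y^{\beta+1}/(\beta+1)!$ from the coefficient of $y^\beta/\beta!$. The main technical obstacle is the careful tracing of face-boundary walks in the cut and join cases — in particular, correctly identifying the leading $R_1$ in the cut encoding and the middle $R_1$ in the join encoding with the newly introduced $\bdot$-corner from the split, and verifying that the sum over cyclic shifts $S \in (S)$ in $J'$ exactly enumerates the non-root-vertex insertion positions inside the chosen non-root face $(S)$.
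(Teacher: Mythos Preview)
Your proposal is correct and follows essentially the same join--cut argument as the paper: add a canonical $b$-edge with its root-vertex end at the root corner, and split into cut and join cases according to whether the non-root end lands in the root face or a non-root face. Your explicit structural observation that the root face must contain the unique Region~4 corner (so that exactly $\ell(R)-1$ of its non-root-vertex corners lie in Region~3) is a welcome clarification of why the $i=1$ slot is excluded, a point the paper only asserts in passing.
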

\begin{proof} 
Let $\serB_b:= b![y^b]\serB$. It suffices to show that
\[
\serB_b = (C' + u^2 J')\serB_{b-1}
\]
for $b\geq 1$. The set of all $(a,b,0,0)$-dipoles is generated uniquely from the set of $(a,b-1,0,0)$-dipoles by adding a $b$-edge $e$ in all possible ways such that one end of $e$ is affixed to the root vertex at the root corner of the dipole. Suppose that $D$ is a $(a,b-1,0,0)$-dipole encoded by the monomial
\[
u^{2g} g_R \prod_i f_{(S^{(i)})},
\]
where $R = R_1\ldots R_{\ell(R)}$. We construct operators $C'$ and $J'$ by considering how this monomial changes when $e$ is added to $D$. They will be called the cut and join operators, respectively. There are two cases, since the non-root end of $e$ is added to a corner of either the root face, or a non-root face. 

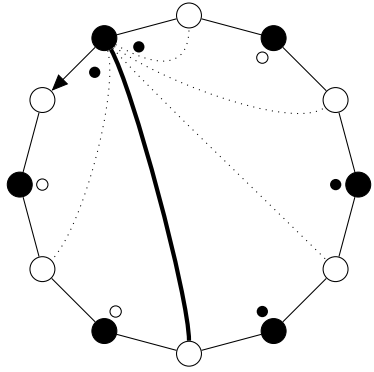
\begin{figure}
\begin{center}
\begin{tikzpicture}[>=triangle 45, scale=0.75]

\path (0,0) coordinate (origin);

\path (origin)+(0:3cm) node[draw,shape=circle,fill=black](v9){};
\path (origin)+(30:3cm) node[draw,shape=circle,fill=white](v10){};
\path (origin)+(60:3cm) node[draw,shape=circle,fill=black](v11){};
\path (origin)+(90:3cm) node[draw,shape=circle,fill=white](v12){};
\path (origin)+(120:3cm) node[draw,shape=circle,fill=black](v1){};
\path (origin)+(150:3cm) node[draw,shape=circle,fill=white](v2){};
\path (origin)+(180:3cm) node[draw,shape=circle,fill=black](v3){};
\path (origin)+(210:3cm) node[draw,shape=circle,fill=white](v4){};
\path (origin)+(240:3cm) node[draw,shape=circle,fill=black](v5){};
\path (origin)+(270:3cm) node[draw,shape=circle,fill=white](v6){};
\path (origin)+(300:3cm) node[draw,shape=circle,fill=black](v7){};
\path (origin)+(330:3cm) node[draw,shape=circle,fill=white](v8){};

\path (origin)+(0:2.6cm) coordinate (v9label){};
\path (origin)+(60:2.6cm) coordinate(v11label){};
\path (origin)+(110:2.6cm) coordinate(v1label-a){};
\path (origin)+(130:2.6cm) coordinate(v1label-b){};
\path (origin)+(180:2.6cm) coordinate(v3label){};
\path (origin)+(240:2.6cm) coordinate(v5label){};
\path (origin)+(300:2.6cm) coordinate(v7label){};

\fill[black] (v1label-a) circle(0.1cm);
\fill[black] (v1label-b) circle(0.1cm);
\draw (v3label) circle(0.1cm);
\draw (v5label) circle(0.1cm);
\fill[black] (v7label) circle(0.1cm);
\fill[black] (v9label) circle(0.1cm);
\draw (v11label) circle(0.1cm);

\draw [->] (v1)--(v2);
\draw (v2)--(v3)--(v4)--(v5)--(v6)--(v7)--(v8)--(v9)--(v10)--(v11)--(v12)--(v1);

\draw[ultra thick] (v1) .. controls +(-60:1cm) and +(90:1cm) .. (v6);

\draw[dotted] (v1) .. controls +(-75:1cm) and +(45:1cm) .. (v4);
\draw[dotted] (v1) .. controls +(-50:1cm) and +(135:1cm) .. (v8);
\draw[dotted] (v1) .. controls +(-40:1cm) and +(215:1cm) .. (v10);
\draw[dotted] (v1) .. controls +(-30:1cm) and +(270:1cm) .. (v12);

\end{tikzpicture}
\end{center}
\caption{A new $b$-edge, indicated by the thickened edge, is added to an $(a,b,0,0)$-dipole in a way that cuts the root face.}
\label{fig:ab00-cut-illustration}
\end{figure}

\noindent \textbf{``Cut'' case}: If the non-root end of $e$ is attached to a corner of the root face then the root face is cut into a root face and a non-root face, and the genus is unchanged. This case is illustrated in Figure \ref{fig:ab00-cut-illustration}. In this example, a root face marked by the indeterminate $g_{\bdot\wdot\wdot\bdot\bdot\wdot}$ is cut into two faces, one of which is a new root face marked by $g_{\bdot\wdot\wdot}$, and the other of which is a non-root face marked by $f_{(\bdot\bdot\wdot\bdot)}$. The dotted lines indicate other permissible choices for the new $b$-edge. The edge $e$ also divides the root corner into two corners each of  which is marked by the symbol $R_1$, one being the new root corner, and the other a new non-root corner. 

To determine the monomial encoding the resulting $(a,b,0,0)$-dipole, consider the symbols encountered on a counterclockwise boundary tour of the root face starting at the root corner. The non-root end of $e$ was added to a corner immediately following $R_i$ for some $2\leq i\leq \ell(R)$. (It cannot be added to the corner following $R_1$, since otherwise it would be a $d$-edge.) Thus, on a boundary tour of the root face, once the symbol $R_i$ is encountered, the edge $e$ completes the boundary tour, so the root face of the resulting dipole is encoded by $g_{R_1\cdots R_i}$. 

To determine the encoding of the new non-root face, consider a counterclockwise boundary tour starting at the newly created non-root corner, marked by $R_1$. The first edge encountered is $e$, and the next symbol encountered will be $R_{i+1}$, after which the boundary tour visits the remaining corners of the former root face, ending when the newly created non-root corner is reached. Hence, this face is encoded by $f_{(R_1R_i\cdots R_{\ell(R)})}$. The differential operator corresponding to replacing $g_R$ with $g_{R_1\cdots R_i}f_{(R_1R_i\cdots R_{\ell(R)})}$ for some $2\leq i \leq \ell(R)$ is the cut operator
\[
C' = \sum_{R\in \{\wdot,\bdot\}^*} \left( \sum_{2\leq i \leq \ell(R)} g_{R_1\cdots R_i} f_{(R_1R_{i+1}\cdots R_{\ell(R)})} \right) \frac{\partial}{\partial g_R}. 
\]

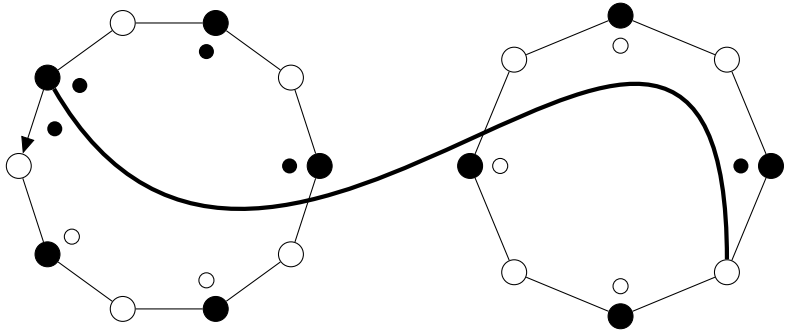
\begin{figure}
\begin{center}
\begin{tikzpicture}[>=triangle 45]

\path (0,0) coordinate (origin);

\path (origin)+(0:2cm) node[draw,shape=circle,fill=black](v7){};
\path (origin)+(36:2cm) node[draw,shape=circle,fill=white](v8){};
\path (origin)+(72:2cm) node[draw,shape=circle,fill=black](v9){};
\path (origin)+(108:2cm) node[draw,shape=circle,fill=white](v10){};
\path (origin)+(144:2cm) node[draw,shape=circle,fill=black](v1){};
\path (origin)+(180:2cm) node[draw,shape=circle,fill=white](v2){};
\path (origin)+(216:2cm) node[draw,shape=circle,fill=black](v3){};
\path (origin)+(252:2cm) node[draw,shape=circle,fill=white](v4){};
\path (origin)+(288:2cm) node[draw,shape=circle,fill=black](v5){};
\path (origin)+(324:2cm) node[draw,shape=circle,fill=white](v6){};

\path (origin)+(0:1.6cm) coordinate (v7label){};
\path (origin)+(72:1.6cm) coordinate(v9label){};
\path (origin)+(162:1.6cm) coordinate(v1label-a){};
\path (origin)+(138:1.6cm) coordinate(v1label-b){};
\path (origin)+(216:1.6cm) coordinate(v3label){};
\path (origin)+(288:1.6cm) coordinate(v5label){};

\fill[black] (v1label-a) circle(0.1cm);
\fill[black] (v1label-b) circle(0.1cm);
\draw (v3label) circle(0.1cm);
\draw (v5label) circle(0.1cm);
\fill[black] (v7label) circle(0.1cm);
\fill[black] (v9label) circle(0.1cm);

\draw[->] (v1)--(v2);
\draw (v2)--(v3)--(v4)--(v5)--(v6)--(v7)--(v8)--(v9)--(v10)--(v1);

\path (6,0) coordinate (origin2);

\path (origin2)+(0:2cm) node[draw,shape=circle,fill=black](w7){};
\path (origin2)+(45:2cm) node[draw,shape=circle,fill=white](w8){};
\path (origin2)+(90:2cm) node[draw,shape=circle,fill=black](w1){};
\path (origin2)+(135:2cm) node[draw,shape=circle,fill=white](w2){};
\path (origin2)+(180:2cm) node[draw,shape=circle,fill=black](w3){};
\path (origin2)+(225:2cm) node[draw,shape=circle,fill=white](w4){};
\path (origin2)+(270:2cm) node[draw,shape=circle,fill=black](w5){};
\path (origin2)+(315:2cm) node[draw,shape=circle,fill=white](w6){};

\path (origin2)+(0:1.6cm) coordinate (w7label){};
\path (origin2)+(90:1.6cm) coordinate (w1label){};
\path (origin2)+(180:1.6cm) coordinate (w3label){};
\path (origin2)+(270:1.6cm) coordinate (w5label){};

\draw (w1) -- (w2) -- (w3) -- (w4) -- (w5) -- (w6) -- (w7) -- (w8) -- (w1);

\draw (w1label) circle(0.1cm);
\draw (w3label) circle(0.1cm);
\draw (w5label) circle(0.1cm);
\fill[black] (w7label) circle(0.1cm);

\draw[ultra thick] (v1) .. controls +(-60:6cm) and +(90:6.5cm) .. (w6);

\end{tikzpicture}
\end{center}
\caption{A new $b$-edge, indicated by the thickened line, is added to an $(a,b,0,0)$-dipole such that the root face is joined to a non-root face.}
\label{fig:ab00-join-illustrated}
\end{figure}

\noindent \textbf{``Join'' case}: Suppose, as illustrated in Figure \ref{fig:ab00-join-illustrated}, that the non-root end of $e$ is attached to a corner of a non-root face, marked by $f_{(S)}$, where $S = S_1\cdots S_{\ell(S)}$ is some fixed representative element of $(S)$.  (For any given $(S)$, the number of choices for a face with $(S)$ as the sequence of symbols encountered on a counterclockwise boundary walk is equal to the degree of $f_{(S)}$ in  $\prod_i f_{(S^{(i)})}$.) To join $e$ to corners of two different faces, it is necessary to add a handle to the surface, thereby increasing its genus by $1$. Adding the edge $e$ joins the two faces marked by $g_R$ and $f_{(S)}$ into a root face of higher degree. In the example illustrated, the root face, marked by $g_{\bdot\wdot\wdot\bdot\bdot}$, is joined to a non-root face, marked by $f_{(\wdot\wdot\wdot\bdot)}$. The resulting face is marked by $g_{\bdot\wdot\wdot\bdot\bdot\bdot\bdot\wdot\wdot\wdot}$. 

Consider a counterclockwise boundary tour of this new face, starting at the root corner. Since the non-root end of $e$ was added to a non-root face, this tour will first visit corners marked by $R_1,R_2,\ldots, R_{\ell(R)}$ before visiting the newly created non-root corner, marked by $R_1$, followed by the edge $e$. The boundary walk then continues around the face marked by $f_{(S)}$, with the sequence of symbols encountered given by some element of the set $(S)$. In terms of the fixed representative $S$, the sequence is $S_i,\cdots,S_{\ell(S)} S_1\cdots S_{i-1}$ for some $1\leq i\leq \ell(S)$, followed by the edge $e$ which returns to the root corner, ending the boundary tour. Thus, the new root face is encoded by $g_{R R_1 S_i,\cdots,S_{\ell(S)} S_1\cdots S_{i-1}}$. 
Summing over all cyclic shifts of $S$ gives the following join operator corresponding to this case.
\[
J' =  \sum_{R\in \{\wdot,\bdot\}^*} \sum_{(S) \in \mathcal{S}(\wdot,\bdot)} \left(\sum_{S\in (S)} g_{RR_1 S} \right) \frac{\partial^2}{\partial g_R \partial f_{(S)}}.
\]
Since contracting the root face of a $(a,0,0,0)$-dipole results in an ordinary rooted dipole with $a(D)+1$ edges, the initial condition is
\begin{align*}
\serB_{y=0} &=\!\!\!\!\!\!\!\! \sum_{\substack{ D \in \dhat, \\b(D)=c(D)=d(D)=0}}\!\!\!\!\!\!\!\! \frac{x^{a(D)+1}}{(a(D)+1)!} u^{2g(D)} g_{\bdot} f_{\Lambda(D)} = g_{\bdot} \sum_{D\in \mathcal{D}} \frac{x^{n(D)}}{n(D)!} u^{2g(D)} f_{\lambda'(D)}.
\end{align*}
This completes the proof.
\end{proof}
It is worth noting that the set of $(a,b,0,0)$-dipoles corresponds to the set of $(p,n-1,n)$-dipoles. While the marking of the faces with black dots and white dots is needed in order to produce a generating series which may be used as an initial condition for the $(a,b,c,d)$-dipole problem, if we record only the degrees of the faces, an argument similar to the proof of Lemma \ref{lemma:ab00-PDE} gives the cut operator
\[
\tilde{C}= \sum_{i\geq 2} \sum_{1\leq j\leq i-1} g_{j+1}f_{i-j} \frac{\partial}{\partial g_i} = \sum_{i\geq 1} \sum_{j\geq 1} g_{i+1}f_j \frac{\partial}{\partial g_{i+j}}
\]
and the join operator 
\[
\tilde{J} = \sum_{i\geq 1}\sum_{j\geq 1} j g_{i+j+1}\frac{\partial^2}{\partial g_i \partial f_j}.
\]
These operators only involve the face-degree sequence and the length of the root face, suggesting that the problem of enumerating $(p,n-1,n)$-dipoles lies in $Z_1(n)$, as opposed to $Z_2(n)$. This is significant because $Z_1(n)$ is a commutative algebra for which an explicit basis of orthogonal idempotents is known, whereas $Z_2(n)$ is non-commutative. The $(p,n-1,n)$-dipole problem is an example of a \emph{near-central} problem that can be solved using algebraic methods in the same spirit as the character-theoretic approach to loopless dipoles. The  reader is directed to \cite{JacksonSloss:2011}, where Strahov's \cite{Strahov:2007} generalized characters are used to solve this problem.

%========================================================================
\subsection{Join and Cut operators for $(a,b,c,d)$-dipoles}
%========================================================================

A similar analysis gives the following equation for $\serC$.
\begin{thm}
The $(a,b,c,d)$-dipole series $\serC$ is the unique solution to the formal partial differential equation
\begin{equation}
\label{eqn:abcd-PDE}
(C''+u^2 J'') \serC = \frac{\partial \serC}{\partial v}
\end{equation}
where
\begin{align*}
C'' &:= \sum_{R\in \{\wdot,\bdot\}^*} \left( \sum_{2\leq i \leq \ell(R)}w^{\delta_{R_i,\bdot} } g_{R_1R_i\cdots R_{\ell(R)}} f_{(R_2\cdots R_i)} + w g_{R_1}f_{(R)} \right) \frac{\partial}{\partial g_R},\\
J'' &:=  \sum_{R\in \{\wdot,\bdot\}^*} \sum_{(S) \in \mathcal{S}(\wdot,\bdot)}\left( \sum_{S_1S_2\cdots S_{\ell(S)}\in (S)} \!\!\!\!\!\!\!\! w^{\delta_{S_1,\bdot}} g_{R_1S_1\cdots S_{\ell(S)} S_1 R_2\cdots R_{\ell(R)}} \right) \frac{\partial^2}{\partial g_R \partial f_{(S)}},
\end{align*}
and the initial condition is $\serC|_{v=0} = \serB$,
where $\serB$ is given by Lemma \ref{lemma:ab00-PDE}.
\label{thm:abcd-PDE}
\end{thm}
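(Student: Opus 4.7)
The plan is to proceed by induction on the degree in $v$, mirroring the strategy used in Lemma~\ref{lemma:ab00-PDE}. Writing $\serC_k := k![v^k]\serC$, the initial condition $\serC_0 = \serB$ holds by construction, so it suffices to establish the recursion $\serC_k = (C''+u^2J'')\serC_{k-1}$ for each $k\geq 1$. The exponential factor $v^k/k!$ converts the adjoining of a single $c$- or $d$-edge $e$ into differentiation in $v$, so we must show that each $(a,b,c,d)$-dipole encoded in $\serC_k$ arises uniquely from a dipole encoded in $\serC_{k-1}$ by an edge insertion whose contribution is captured by $C''+u^2J''$.

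Fix a dipole $D'$ contributing a monomial $u^{2g}g_R\prod_i f_{(S^{(i)})}$ with $R=R_1\cdots R_{\ell(R)}$. Since the non-root end of $e$ lies in Region~4 at the non-root vertex, the analysis splits into the cut case (both ends of $e$ in the root face, preserving genus) and the join case (non-root end in a non-root face $f_{(S)}$, requiring a handle and producing the factor $u^2$). In either case, the root end of $e$ contributes a factor $w$ precisely when inserted into a $\bdot$ corner (Region~1, yielding a $d$-edge) and contributes $1$ when inserted into a $\wdot$ corner (Region~2, yielding a $c$-edge), which accounts for the factors $w^{\delta_{R_i,\bdot}}$ and $w^{\delta_{S_1,\bdot}}$ respectively.

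For the cut, $e$ subdivides the root face at the root-vertex corner marked $R_i$. For $2\leq i\leq \ell(R)$, a counterclockwise boundary walk of the new root face starting at the root corner reads $R_1,R_i,R_{i+1},\ldots,R_{\ell(R)}$, while the cyclic boundary string of the new non-root face reads $R_2,\ldots,R_i$; the corner $R_i$ appears in both faces because it is split by $e$'s root end. The degenerate case in which $e$ is placed adjacent to the root edge at both vertices cuts off a bigon root face containing only the split of the root corner, and yields the isolated summand $wg_{R_1}f_{(R)}$ (the weight is unconditionally $w$ because $R_1$ is always $\bdot$). Summing over $R$ recovers $C''$. For the join, the root end of $e$ is inserted at the corner marked $S_1$ of the non-root face $f_{(S)}$ for a chosen cyclic representative $S=S_1\cdots S_{\ell(S)}\in(S)$, and the non-root end merges the two faces via the added handle. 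Tracing the boundary walk of the merged face from the root corner produces the string $R_1,S_1,\ldots,S_{\ell(S)},S_1,R_2,\ldots,R_{\ell(R)}$, in which $S_1$ appears twice because it is split by $e$; summing over cyclic representatives $S\in(S)$ accounts for all insertion positions and reproduces $J''$.

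The main obstacle lies in correctly identifying the interleaving of the two boundary walks in the join case. In contrast to Lemma~\ref{lemma:ab00-PDE}, where the new $b$-edge splits the root corner $R_1$ (giving the pattern $R\,R_1\,S$ for $J'$), here the $c$- or $d$-edge splits a root-vertex corner of the non-root face, giving the pattern $R_1\,S\,S_1\,R_2\cdots R_{\ell(R)}$ with $S_1$ duplicated. This asymmetry stems from the topological distinction that a $b$-edge accesses its non-root endpoint via Region~3 (on the same side as the root face), whereas a $c$- or $d$-edge must reach Region~4 on the opposite side of the non-root vertex. Verifying this interleaving carefully, and separately handling the degenerate $i=1$ bigon case in the cut, constitute the main combinatorial content of the proof.
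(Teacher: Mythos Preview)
Your approach is the same as the paper's: induct on the $v$-degree, interpret $\serC_k=(C''+u^2J'')\serC_{k-1}$ as the effect of adjoining a single edge with one end in Region~4, and split into cut and join cases. Your boundary-walk computations for both cases match the paper's exactly, and your identification of the degenerate $i=1$ bigon (with the observation that $R_1=\bdot$ always) is correct.

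There is, however, one genuine omission. You assert that each dipole in $\serC_k$ arises \emph{uniquely} from one in $\serC_{k-1}$, but you never specify \emph{which} corner of Region~4 receives the non-root end of $e$. Region~4 contains $k$ corners once $k-1$ edges have already been placed there, so without a canonical choice the construction is not a bijection. The paper fixes this by always placing the non-root end at the first corner encountered clockwise from the root edge at the non-root vertex; this is precisely what forces the new edge to sit immediately after the root edge in the root-face boundary walk, and it is the reason your strings $R_1R_i\cdots R_{\ell(R)}$ and $R_1S_1\cdots S_{\ell(S)}S_1R_2\cdots R_{\ell(R)}$ come out as they do. Your calculations silently assume this placement, so you should state it. Relatedly, your sentence ``join case (non-root end in a non-root face $f_{(S)}$)'' is a slip: with the canonical placement the non-root end is always in the root face, and it is the \emph{root} end that lands in the non-root face in the join case---as you yourself write two paragraphs later.
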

\begin{proof}
Let $\serC_m := m! [v^m]\serC$ be the $(a,b,c,d)$-dipole series in which $c+d=m$. In other words, this is the series for dipoles in which there are $m$ edges having an end in Region~4. The set of dipoles having $m$ edges in Region~4 is uniquely generated by adding an edge $e$, with one end in Region~4, to a dipole $D$ having $m-1$ edges in Region~4 in a canonical way. This edge will be added so that its non-root end is added to the corner which is the first one encountered as one travels clockwise around the non-root vertex starting from the root edge. Placing the new edge in this manner ensures that it will be part of the root face, being the next edge encountered after the root edge on a counterclockwise boundary tour of the root face. The root end of $e$ will be added to a corner which is marked either with a $\bdot$ or with a $\wdot$. If it is added to a corner marked with a $\bdot$, then it is a $d$-edge. If it is added to a corner marked with a $\wdot$, then it is a $c$-edge. Suppose that $D$ is encoded by the monomial 
\[
u^{2g} g_R \prod_i f_{(S^{(i)})},
\]
where $R=R_1\cdots R_{\ell(R)}$. As before, there are two cases depending on whether the root end of $e$ is added to a corner of the root face, or of a non-root face. 

\begin{figure}
\begin{center}
\begin{tikzpicture}[>=triangle 45, scale=0.75]

\path (0,0) coordinate (origin);

\path (origin)+(0:3cm) node[draw,shape=circle,fill=black](v9){};
\path (origin)+(30:3cm) node[draw,shape=circle,fill=white](v10){};
\path (origin)+(60:3cm) node[draw,shape=circle,fill=black](v11){};
\path (origin)+(90:3cm) node[draw,shape=circle,fill=white](v12){};
\path (origin)+(120:3cm) node[draw,shape=circle,fill=black](v1){};
\path (origin)+(150:3cm) node[draw,shape=circle,fill=white](v2){};
\path (origin)+(180:3cm) node[draw,shape=circle,fill=black](v3){};
\path (origin)+(210:3cm) node[draw,shape=circle,fill=white](v4){};
\path (origin)+(240:3cm) node[draw,shape=circle,fill=black](v5){};
\path (origin)+(270:3cm) node[draw,shape=circle,fill=white](v6){};
\path (origin)+(300:3cm) node[draw,shape=circle,fill=black](v7){};
\path (origin)+(330:3cm) node[draw,shape=circle,fill=white](v8){};

\path (origin)+(0:2.6cm) coordinate (v9label-a){};
\path (origin)+(-15:2.6cm) coordinate (v9label-b){};
\path (origin)+(60:2.6cm) coordinate(v11label){};
\path (origin)+(120:2.6cm) coordinate(v1label){};
\path (origin)+(180:2.6cm) coordinate(v3label){};
\path (origin)+(240:2.6cm) coordinate(v5label){};
\path (origin)+(300:2.6cm) coordinate(v7label){};

\fill[black] (v1label) circle(0.1cm);
\draw (v3label) circle(0.1cm);
\draw (v5label) circle(0.1cm);
\fill[black] (v7label) circle(0.1cm);
\fill[black] (v9label-a) circle(0.1cm);
\fill[black] (v9label-b) circle(0.1cm);
\draw (v11label) circle(0.1cm);

\draw [->] (v1)--(v2);
\draw (v2)--(v3)--(v4)--(v5)--(v6)--(v7)--(v8)--(v9)--(v10)--(v11)--(v12)--(v1);

\draw[ultra thick] (v2) .. controls +(-75:3cm) and +(-135:3cm) .. (v9);

\end{tikzpicture}
\end{center}
\caption{A $d$-edge is added to an $(a,b,c,d)$-dipole in a manner which cuts the root face.}
\label{fig:abcd-cut-illustrated}
\end{figure}
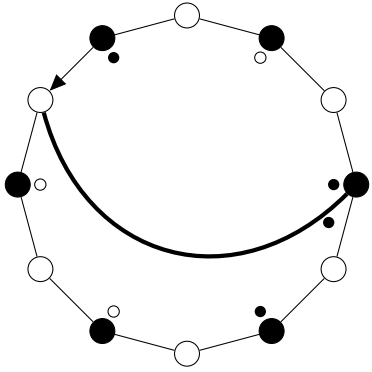

\noindent \textbf{``Cut'' Case}: First, consider the case when the root end of $e$ is added to a corner of the root face, say, the corner marked by $R_i$ where $1\leq i \leq \ell(R)$. This case is illustrated in Figure \ref{fig:abcd-cut-illustrated}. In the ``cut'' case, the root face is cut into two faces, one of which is the new root face, and one of which is a non-root face. In the example illustrated, the root face, originally marked by $g_{\bdot\wdot\wdot\bdot\bdot\wdot}$, is cut into a new root face, marked by $g_{\bdot\bdot\wdot}$, and a non-root face, marked by $f_{(\wdot\wdot\bdot\bdot)}$. This edge is known to be a $d$-edge, as opposed to a $c$-edge, since its root end is added to a corner marked by $\bdot$. Thus, in this example a factor of $w$ must be added to the monomial encoding the dipole.

When $i=1$, the addition of $e$ is a $d$ edge, and creates a new root face which is a digon marked by $R_1$. In this case, the corners of the non-root face are marked by the same sequence of symbols as the old root face, so the contribution in this case is
\[
w g_{R_1} f_{(R)}  \prod_i f_{(S^{(i)})}.
\]

The general case occurs if $2\leq i \leq \ell(R)$. Consider a counterclockwise boundary tour of the new root face, starting at the root corner (marked by $R_1$). After travelling along the root edge, the edge $e$ is encountered, which goes to the corner marked by $R_i$. Continuing the boundary walk, the symbols $R_{i+1},\ldots R_{\ell}$ are encountered, after which the boundary walk is complete. As for the newly-created non-root face, since non-root faces are encoded up to cyclic equivalence any starting point for the boundary walk may be chosen. It is convenient to start at the corner counterclockwise from the non-root end of $e$. Starting from this corner, the corners encountered are marked by $R_2, R_3,\ldots, R_i$, upon which the edge $e$ is encountered, returning to the starting point. Thus, the contribution from this case is
\[
w^{\delta_{R_i,\bdot}} g_{R_1R_i\cdots R_{\ell(R)}} f_{(R_2\cdots R_i)}  \prod_i f_{(S^{(i)})}.
\]
Summing over all cases, the differential operator corresponding to a cut edge is 
\[
C'' = \sum_{R\in \{\wdot,\bdot\}^*} \left( \sum_{2\leq i \leq \ell(R)}w^{\delta_{R_i,\bdot} } g_{R_1R_i\cdots R_{\ell(R)}} f_{(R_2\cdots R_i)} + w g_{R_1}f_{(R)} \right) \frac{\partial}{\partial g_R}.
\]

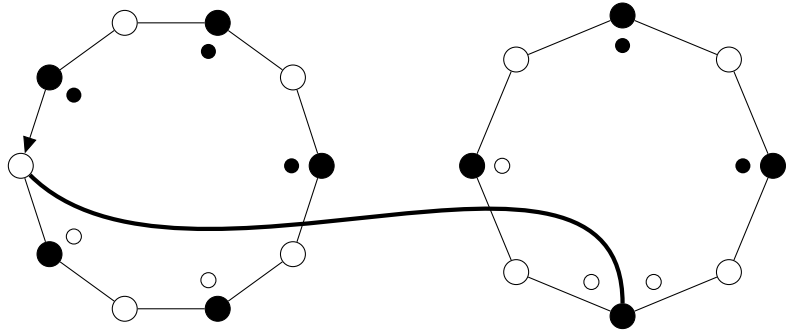
\begin{figure}
\begin{center}
\begin{tikzpicture}[>=triangle 45]

\path (0,0) coordinate (origin);

\path (origin)+(0:2cm) node[draw,shape=circle,fill=black](v7){};
\path (origin)+(36:2cm) node[draw,shape=circle,fill=white](v8){};
\path (origin)+(72:2cm) node[draw,shape=circle,fill=black](v9){};
\path (origin)+(108:2cm) node[draw,shape=circle,fill=white](v10){};
\path (origin)+(144:2cm) node[draw,shape=circle,fill=black](v1){};
\path (origin)+(180:2cm) node[draw,shape=circle,fill=white](v2){};
\path (origin)+(216:2cm) node[draw,shape=circle,fill=black](v3){};
\path (origin)+(252:2cm) node[draw,shape=circle,fill=white](v4){};
\path (origin)+(288:2cm) node[draw,shape=circle,fill=black](v5){};
\path (origin)+(324:2cm) node[draw,shape=circle,fill=white](v6){};

\path (origin)+(0:1.6cm) coordinate (v7label){};
\path (origin)+(72:1.6cm) coordinate(v9label){};
\path (origin)+(144:1.6cm) coordinate(v1label){};
\path (origin)+(216:1.6cm) coordinate(v3label){};
\path (origin)+(288:1.6cm) coordinate(v5label){};

\fill[black] (v1label) circle(0.1cm);
\draw (v3label) circle(0.1cm);
\draw (v5label) circle(0.1cm);
\fill[black] (v7label) circle(0.1cm);
\fill[black] (v9label) circle(0.1cm);

\draw[->] (v1)--(v2);
\draw (v2)--(v3)--(v4)--(v5)--(v6)--(v7)--(v8)--(v9)--(v10)--(v1);

\path (6,0) coordinate (origin2);

\path (origin2)+(0:2cm) node[draw,shape=circle,fill=black](w7){};
\path (origin2)+(45:2cm) node[draw,shape=circle,fill=white](w8){};
\path (origin2)+(90:2cm) node[draw,shape=circle,fill=black](w1){};
\path (origin2)+(135:2cm) node[draw,shape=circle,fill=white](w2){};
\path (origin2)+(180:2cm) node[draw,shape=circle,fill=black](w3){};
\path (origin2)+(225:2cm) node[draw,shape=circle,fill=white](w4){};
\path (origin2)+(270:2cm) node[draw,shape=circle,fill=black](w5){};
\path (origin2)+(315:2cm) node[draw,shape=circle,fill=white](w6){};

\path (origin2)+(0:1.6cm) coordinate (w7label){};
\path (origin2)+(90:1.6cm) coordinate (w1label){};
\path (origin2)+(180:1.6cm) coordinate (w3label){};
\path (origin2)+(285:1.6cm) coordinate (w5label-a){};
\path (origin2)+(255:1.6cm) coordinate (w5label-b){};

\draw (w1) -- (w2) -- (w3) -- (w4) -- (w5) -- (w6) -- (w7) -- (w8) -- (w1);

\fill[black] (w1label) circle(0.1cm);
\draw (w3label) circle(0.1cm);
\draw (w5label-a) circle(0.1cm);
\draw (w5label-b) circle(0.1cm);
\fill[black] (w7label) circle(0.1cm);

\draw[ultra thick] (v2) .. controls +(-45:3cm) and +(90:3cm) .. (w5);

\end{tikzpicture}
\end{center}

\caption{A $c$-edge is added to an $(a,b,c,d)$-dipole such that the root face is joined to a non-root face.}

\label{fig:abcd:join-illustrated}
\end{figure}

\noindent \textbf{``Join'' case}: Next, consider the case when the root end of $e$ is added to a corner of a non-root face, as illustrated in Figure \ref{fig:abcd:join-illustrated}. In this example, the two original faces were marked by $g_{\bdot\wdot\wdot\bdot\bdot}$ and $f_{(\wdot\bdot\bdot\wdot)}$, and the resulting face is marked by $g_{\bdot\wdot\bdot\bdot\wdot\wdot \wdot\wdot\bdot\bdot}$. The edge added in this example is known to be a $c$-edge, as opposed to a $d$-edge, since its root end was added to a corner marked by $\wdot$, so no additional factor of $w$ is contributed.

Suppose the non-root face to which the edge $e$ is added is marked by $f_{(S)}$. (The number of choices for a given $(S)$ is equal to the degree of $f_{(S)}$ in $\prod_i f_{(S^{(i)})}$.) The choice of corner of the non-root face picks out one specific string from the set of cyclic binary strings $(S)$, say, the string $S_1\cdots S_{\ell(S)}$ such that $S_1$ is the corner to which $e$ is added. Then, on a counterclockwise boundary tour of the new root face, the corner labels are encountered in the order
\[
R_1, S_1, S_2,\ldots, S_{\ell(S)}, S_1, R_2, R_3,\ldots, R_{\ell(R)}.
\]
If $S_1= \bdot$, $e$ is a $d$-edge and an additional power of $w$ is needed. If $S_1=\wdot$, $e$ is a $c$-edge and the power of $w$ remains the same. Summing over all choices of a cyclic binary string in $S$, the differential operator corresponding to this case is the join operator
\[
J'' :=  \sum_{R\in \{\wdot,\bdot\}^*} \sum_{(S) \in \mathcal{S}(\wdot,\bdot)}\left( \sum_{S_1S_2\cdots S_{\ell(S)}\in (S)} w^{\delta_{S_1,\bdot}} g_{R_1S_1\cdots S_{\ell(S)} S_1 R_2\cdots R_{\ell(R)}} \right) \frac{\partial^2}{\partial g_R \partial f_{(S)}}.
\]
This completes the proof.
\end{proof}

%========================================================================
\section{A Genus-recursive Approach to Solving the $(a,b,c,d)$-dipole problem}
\label{sec:genus-recursion}
%========================================================================

%========================================================================
\subsection{An overview of the strategy}
\label{sec:strategy}
%========================================================================
In this section, we describe a recursive process for determining the solution $\serC^{(g)}:= [u^{2g}]\serC$ for the $(a,b,c,d)$-dipole series for a surface of genus $g$. We let $\serB^{(g)}:= [u^{2g}]\serB$ denote the genus $g$ solution for $(a,b,0,0)$-dipoles. 

\subsubsection{Absorbing $J''$ through dehomogenization} 

When solving Equation (\ref{eqn:abcd-PDE}) recursively in genus, the operator $J''$ may be effectively removed from the equation, at the cost of introducing inhomogeneous terms. Applying $[u^{2g}]$ to both sides of Equation (\ref{eqn:abcd-PDE}), 
\begin{equation}
\label{eqn:abcd-recursion}
\left( \frac{\partial}{\partial v} -C''\right) \serC^{(g)} = J'' \serC^{(g-1)}
\end{equation}
when $g\geq 1$, with
\begin{align*}
\serC^{(0)} &= \sum_{a\geq 0} \sum_{d\geq 0} \frac{x^{a+1}}{(a+1)!} \frac{(vw)^d}{d!} g_{\bdot} f_{(\bdot)}^d f_{(\wdot)}^{a+1}.
\end{align*}
Note that Equation (\ref{eqn:abcd-recursion}) only requires that we be able to apply $J''$ to $\serC^{(g-1)}$, a series which we regard as known, which is a simpler task than solving a differential equation involving $J''$. The analysis of Equation (\ref{eqn:abcd-recursion}) is further simplified by converting it to a differential equation whose initial condition is zero. Let 
\[
\hat{\serC}^{(g)} = \serC^{(g)} - \serC^{(g)}|_{v=0} = \serC^{(g)} - \serB^{(g)}.
\]
Then $\hat{\serC}^{(g)}$ is the solution to the equation
\begin{equation}
\label{eqn:psi-doubleprime-no-ic}
\left( \frac{\partial}{\partial v} -C''\right)  \hat{\serC}^{(g)} = J'' \serC^{(g-1)} + C''\serB^{(g)}.
\end{equation}

Because $\frac{\partial}{\partial v} -C''$ is linear, and all $f$-type indeterminates are constant with respect to it, in order to determine $\hat{\serC}$ it suffices to solve equations of the form
\[
\left( \frac{\partial}{\partial v} -C''\right) H = g_R \omega(v)
\]
for an arbitrary function $\omega$. That is, we need to determine the preimages of $g_R \omega(v)$ under $\left( \frac{\partial}{\partial v} -C''\right)$.

\subsubsection{Constructing candidate $H$ and $\omega$}

Since the series of combinatorial interest are linear in the $g$-type indeterminates, our solution technique is motivated by looking for solutions of the form 
\[
H = g_R \hat{\omega}(v) + F
\]
where $F$ and $\hat\omega$ are some functions to be determined. For solutions of this form,
\begin{align*}
\left( \frac{\partial}{\partial v} -C''\right) H &= g_R \frac{\partial \hat{\omega}}{\partial v} - w^{\delta_{R_2,\bdot}} g_R f_{(R_2)}\hat\omega(v) - \text{``lower order terms''} + \left( \frac{\partial}{\partial v} -C''\right)F,
\end{align*}
where the ``lower order terms'' involve $g$-type indeterminates of length strictly less than that of $R$. Thus, we may inductively choose $F$ so that $\left( \frac{\partial}{\partial v} -C''\right)F$ cancels the ``lower order terms,'' giving
\[
\left( \frac{\partial}{\partial v} -C''\right) H = g_R \left(\frac{\partial \hat{\omega}}{\partial v} - w^{\delta_{R_2,\bdot}} f_{(R_2)}\hat\omega(v)\right).
\]
Making the supposition that $\hat{\omega}$ may be chosen to be the solution to the ordinary differential equations
\begin{equation}
\frac{\partial \hat{\omega}}{\partial v} - w f_{(\bdot)}\hat\omega(v) = \omega(v)
\label{eqn:omega-ODE-1}
\end{equation}
when $R_2=\bdot$, and
\begin{equation}
\frac{\partial \hat{\omega}}{\partial v} - f_{(\wdot)}\hat\omega(v) = \omega(v)
\label{eqn:omega-ODE-2}
\end{equation}
when $R_2=\wdot$, then $\left( \frac{\partial}{\partial v} -C''\right) H = g_R \omega(v)$. This characterization of $\hat\omega$ suggests that the we require a family of functions of $v$ which are naturally indexed by binary strings. Our approach to determining the preimage of $\left( \frac{\partial}{\partial v} -C''\right)$ consists of two steps: \textbf{(Step 1)} the identification of a class of functions which satisfies Equations (\ref{eqn:omega-ODE-1}) and (\ref{eqn:omega-ODE-2}), and \textbf{(Step 2)} the determination of expressions for the preimages with respect to $\frac{\partial}{\partial v} -C''$ and $\frac{\partial}{\partial y} -C'$. Since our solution technique is recursive, a vital property of our descriptions for the preimages is that they are expressed in terms of the class of functions identified in Step 1. Step 1 is carried out in Section \ref{sec:phi-functions}. Step 2 is carried out in Section \ref{sec:ab00-preimages} (for the $(a,b,0,0)$-dipole problem) and Section \ref{sec:abcd-preimages} (for the $(a,b,c,d)$-dipole problem).

\subsubsection{The $(a,b,0,0)$ case}
In a similar manner, the genus $g$ solution to the $(a,b,0,0)$-dipole problem may be determined by solving the equation
\begin{equation}
\label{eqn:ab00-diff-recursion}
\left(\frac{\partial}{\partial y} - C'\right) \serB^{(g)} = J \serB^{(g-1)}
\end{equation}
when $g\geq 1$. When $g=0$,
\[
\serB^{(0)} = g_{\bdot} \sum_{a\geq 0} \frac{x^{a+1}}{(a+1)!} f_{(\wdot)}^{a+1} = g_{\bdot}(\exp(xf_{(\wdot)}) -1).
\]
Let $\hat{\serB}^{(g)} = \serB^{(g)} - \serB^{(g)}|_{y=0}$. When $y=0$, the series $\serB$ corresponds to dipoles with no $b$-edges, for which the root face is a digon. Thus, $C' \serB^{(g)}|_{y=0} =0$, so solving Equation (\ref{eqn:ab00-diff-recursion}) is equivalent to solving
\begin{equation}
\label{eqn:ab00-diff-recursion-2}
\left(\frac{\partial}{\partial y} - C'\right) \hat{\serB}^{(g)} = J \serB^{(g-1)}
\end{equation}
with the initial condition $\hat{\serB}^{(g)} =0$. 

\subsection{The functions $\phi_S$ and $\phi_i$}
\label{sec:phi-functions}

We now exhibit an explicit set of functions which satisfy Equations (\ref{eqn:psi-doubleprime-no-ic}) and (\ref{eqn:ab00-diff-recursion-2}).
\begin{defn}[General $\phi$-function]
Let $i\geq 1$, and let $v,x_1,\ldots,x_i$ be indeterminates. Define
\[
\phi(v,x_1,\ldots,x_i) = 
\begin{cases}
\sum_{n\geq i} h_{n-i}(x_1,\ldots,x_i) \frac{v^n}{n!} & \text{ if } i>0,\\
1 & \text{ if } i=0,
\end{cases}
\]
where $h_j(x_1,\ldots,x_k)$ is the complete symmetric function of total degree $j$ in $k$ indeterminates.
\label{defn:general-phi} 
\end{defn}
\noindent (If $j<0$, the convention $h_j(x_1,\ldots x_k)=0$ is used.) The usefulness of these functions lies in the fact that they satisfy the following essential property, which generalizes Equations (\ref{eqn:omega-ODE-1}) and (\ref{eqn:omega-ODE-2}).
\begin{lemma}
\label{lem:fundamental-phi-property}
For $i\geq 1$, 
\[
\frac{\partial}{\partial v} \phi(v,x_1,\ldots,x_i) - x_i \phi(v,x_1,\ldots,x_{i}) = \phi(v,x_1,\ldots,x_{i-1}).
\]
\end{lemma}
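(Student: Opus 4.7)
The plan is to reduce the claimed identity, term by term in $v$, to a classical recurrence for the complete symmetric functions, and then prove that recurrence via the standard generating-function identity $\sum_{k\geq 0} h_k(x_1,\ldots,x_i)\,t^k = \prod_{j=1}^{i}(1-x_j t)^{-1}$.

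First I would differentiate $\phi$ term by term using Definition \ref{defn:general-phi}, obtaining
\[
\frac{\partial}{\partial v}\phi(v,x_1,\ldots,x_i) = \sum_{n\geq i-1} h_{n-i+1}(x_1,\ldots,x_i)\,\frac{v^n}{n!},
\]
after re-indexing. Subtracting $x_i\phi(v,x_1,\ldots,x_i) = \sum_{n\geq i} x_i\, h_{n-i}(x_1,\ldots,x_i)\,\frac{v^n}{n!}$ and then comparing coefficients of $v^n/n!$ on both sides of the claimed identity reduces the lemma to the single algebraic identity
\[
h_k(x_1,\ldots,x_i) - x_i\, h_{k-1}(x_1,\ldots,x_i) = h_k(x_1,\ldots,x_{i-1}),
\]
for all $k\geq 0$ (with the convention $h_{-1}=0$), where $k = n-i+1$.

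Next I would prove this identity. Multiplying the generating function
\[
\sum_{k\geq 0} h_k(x_1,\ldots,x_i)\,t^k = \prod_{j=1}^{i}\frac{1}{1-x_j t}
\]
by $(1-x_i t)$ yields $\sum_{k\geq 0} h_k(x_1,\ldots,x_{i-1})\,t^k$; reading off the coefficient of $t^k$ gives exactly the displayed identity. Equivalently, one may expand $h_k(x_1,\ldots,x_i)$ by splitting off the highest power of $x_i$ in each monomial.

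Finally I would handle the boundary $i=1$ separately, since there $\phi(v,x_1,\ldots,x_{i-1})$ refers to the $i=0$ case and equals $1$ by convention rather than a sum. A direct check using $h_k(x_1) = x_1^k$ gives $\phi(v,x_1) = (e^{x_1 v}-1)/x_1$, so $\partial_v\phi(v,x_1) - x_1\phi(v,x_1) = e^{x_1 v} - (e^{x_1 v}-1) = 1$, matching the right-hand side. I do not expect any significant obstacle; the only care needed is tracking the lower summation index when re-indexing (to confirm the $v^{i-1}/(i-1)!$ term on the left equals $h_0(x_1,\ldots,x_{i-1})=1$) and handling the $i=1$ edge case compatibly with the convention $\phi=1$.
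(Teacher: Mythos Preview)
Your proposal is correct and takes essentially the same approach as the paper: both reduce the identity, coefficient by coefficient in $v$, to the standard recurrence $h_k(x_1,\ldots,x_i)-x_ih_{k-1}(x_1,\ldots,x_i)=h_k(x_1,\ldots,x_{i-1})$ for complete symmetric functions. The only cosmetic difference is that the paper proves this recurrence by the monomial argument you mention as an alternative, rather than via the generating function $\prod_j(1-x_jt)^{-1}$.
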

\begin{proof}
For $n\geq i$, $x_i h_{n-i}(x_1,\ldots,x_i)$ is the sum over monomials of degree $n-i+1$ in the variables $x_1,\ldots,x_i$ such that $x_i$ appears with degree at least 1. However, $h_{n-i+1}(x_1,\ldots,x_i) - h_{n-i+1}(x_1,\ldots,x_{i-1})$ is the sum over the same set of monomials. When $n=i-1$, both $h_{n-i+1}(x_1,\ldots,x_i)$ and $h_{n-i+1}(x_1,\ldots,x_{i-1})$ are equal to 1, and $x_ih_{n-i}(x_1,\ldots,x_i)=0$. Multiplying the equation
\[
x_i h_{n-i}(x_1,\ldots,x_i) = h_{n-i+1}(x_1,\ldots,x_i) - h_{n-i+1}(x_1,\ldots,x_{i-1})
\]
by $\frac{v^n}{n!}$ and summing over $n\geq i-1$ yields the desired result. 
\end{proof}
 
Two specializations of $\phi$ are used in the analysis of the differential operators corresponding to $(a,b,c,d)$-dipoles and $(a,b,0,0)$-dipoles. 
\begin{defn}[$\phi_S(v)$ and $\phi_{i,j}(v)$]
Let $S \in \{\bdot,\wdot\}^i$, say, $S = S_1S_2\cdots S_i$. Define the function $\phi_S(v)$ to be the evaluation of $\phi(v,x_1,\ldots,x_i)$, given by Definition \ref{defn:general-phi}, at
 \[
 x_j = \begin{cases}
 f_{(\wdot)} & \text{~if } S_j=\wdot, \\
 w f_{(\bdot)} & \text{~if } S_j = \bdot.
 \end{cases}
 \]
 Let $\phi_{i,j}(v) = \phi_S(v)$ whenever $S$ is a string consisting of $i$ white dots and $j$ black dots.
 \end{defn}
The notation $\phi_{i,j}(v)$ is well-defined since $\phi$ is symmetric in $x_1,\ldots,x_i$. (Nevertheless, in many cases it will often be more natural to index $\phi$ by $S$ instead of $i$ and $j$; thus, both notations are used.) The specialization of Lemma \ref{lem:fundamental-phi-property} to $\phi_S(v)$ is as follows. 
 \begin{cor}
 \label{cor:phi-first-specialization}
 Let $i\geq 1$ and let $S \in \{\wdot,\bdot\}^{i-1}$. Then
 \[
 \frac{\partial}{\partial v} \phi_{S\wdot}(v) - f_{(\wdot)} \phi_{S\wdot}(v) = \phi_S(v)
\quad\text{ and }\quad
 \frac{\partial}{\partial v} \phi_{S\bdot}(v) - f_{(\bdot)} w\phi_{S\bdot}(v) = \phi_S(v).
 \]
 \end{cor}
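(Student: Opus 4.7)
The plan is to deduce this corollary by direct specialization of Lemma~\ref{lem:fundamental-phi-property}. Given $S\in\{\wdot,\bdot\}^{i-1}$, write the extended string $S\wdot = T_1T_2\cdots T_i$ and apply the definition of $\phi_{S\wdot}(v)$: set $x_j = f_{(\wdot)}$ if $T_j = \wdot$ and $x_j = w f_{(\bdot)}$ if $T_j = \bdot$, for $1\le j\le i$. Because the last letter $T_i$ is $\wdot$, this forces $x_i = f_{(\wdot)}$. Furthermore, since $T_1\cdots T_{i-1}=S$, the first $i-1$ of these values are precisely the specializations that transform $\phi(v,x_1,\ldots,x_{i-1})$ into $\phi_S(v)$. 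Substituting into the identity provided by Lemma~\ref{lem:fundamental-phi-property},
\[
\frac{\partial}{\partial v}\phi(v,x_1,\ldots,x_i) - x_i\,\phi(v,x_1,\ldots,x_i) = \phi(v,x_1,\ldots,x_{i-1}),
\]
then gives the first identity of the corollary. The second identity follows in exactly the same manner, except that the last letter of $S\bdot$ is $\bdot$, so $x_i = w f_{(\bdot)}$ in place of $f_{(\wdot)}$.

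There is no substantive obstacle here, as the corollary is a clean instance of the lemma under the substitution defining the $\phi_S$ notation. The only point worth verifying is that the right-hand side $\phi(v,x_1,\ldots,x_{i-1})$, after substitution, really does coincide with $\phi_S(v)$; this is immediate because the same substitution rule is applied to the common prefix $S$. In particular, no appeal to the symmetry of $h_j$ or to the specific structure of the complete symmetric functions is required beyond what is already packaged in Lemma~\ref{lem:fundamental-phi-property}.
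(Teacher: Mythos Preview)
Your proof is correct and matches the paper's approach exactly: the paper presents this corollary as an immediate specialization of Lemma~\ref{lem:fundamental-phi-property} without further argument, and your proposal simply makes that specialization explicit.
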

A further specialization is the following.
\begin{defn}[$\phi_i(y)$]
Let $\phi_i(y)$ denote the evaluation of $\phi(v,x_1,\ldots,x_i)$, given by Definition \ref{defn:general-phi}, at $v=y$ and $x_j= f_{(\bdot)}$ for $1\leq j\leq i$. 
\end{defn}
In this case, Lemma \ref{lem:fundamental-phi-property} specializes as follows.
 \begin{cor}
 \label{cor:phi-second-specialization}
 Let $i\geq 0$. Then 
 \[
 \frac{\partial \phi_{i+1}}{\partial y} - f_{(\bdot)} \phi_{i+1} = \phi_i.
 \]
 \end{cor}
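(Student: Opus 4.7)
The plan is to obtain this corollary as an immediate specialization of Lemma \ref{lem:fundamental-phi-property}, rather than by any fresh computation. By the definition of $\phi_i(y)$, we have $\phi_i(y) = \phi(v, x_1, \ldots, x_i)$ evaluated at $v = y$ and $x_j = f_{(\bdot)}$ for every $1 \le j \le i$, and similarly for $\phi_{i+1}(y)$. So the first step is to rewrite Lemma \ref{lem:fundamental-phi-property}, with its index raised to $i+1$, as
\[
\frac{\partial}{\partial v} \phi(v, x_1, \ldots, x_{i+1}) \;-\; x_{i+1}\, \phi(v, x_1, \ldots, x_{i+1}) \;=\; \phi(v, x_1, \ldots, x_i).
\]
The second step is simply to set $v = y$ and $x_j = f_{(\bdot)}$ for $1 \le j \le i+1$. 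Since the indeterminates $x_j$ are independent of $v$, the substitution commutes with $\partial/\partial v$; the factor $x_{i+1}$ becomes $f_{(\bdot)}$; and the two $\phi$-terms collapse to $\phi_{i+1}(y)$ and $\phi_i(y)$ respectively by definition.

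There is really no obstacle: the content of the corollary is entirely carried by Lemma \ref{lem:fundamental-phi-property}, and the role of the proof is merely to track the notational specialization. The only points worth a sentence of verification are that the $i = 0$ case is covered (where $\phi_1(y) = \phi(y, f_{(\bdot)})$ and $\phi_0(y) = 1$, consistent with the $i = 0$ branch of Definition \ref{defn:general-phi}), and that the substitution is well-defined as a map of formal power series, which is clear since each coefficient of $\phi(v, x_1, \ldots, x_{i+1})$ is a polynomial in $x_1, \ldots, x_{i+1}$ by Definition \ref{defn:general-phi}.
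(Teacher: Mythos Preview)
Your proposal is correct and follows exactly the approach the paper intends: the corollary is stated immediately after the words ``In this case, Lemma~\ref{lem:fundamental-phi-property} specializes as follows,'' with no further argument given, and your proof simply makes that specialization explicit by shifting the index to $i+1$ and substituting $v=y$, $x_j=f_{(\bdot)}$.
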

Since our strategy involves applying $J'$ and $J''$ to series involving $\phi_i$ and $\phi_S$, it is necessary to determine the derivatives of these functions with respect to $f_{\bdot}$ and $f_{\wdot}$. This is given in the following.
\begin{lemma}
Let $i,j\geq 1$. Then
\[
 \frac{\partial \phi_{i,j}}{\partial f_{(\wdot)}} = i \phi_{i+1,j}, \quad
 \frac{\partial \phi_{i,j}}{\partial f_{(\bdot)}} = jw \phi_{i,j+1}, \quad\text{and }\quad
\frac{\partial \phi_i}{\partial f_{(\bdot)}} = i \phi_{i+1}
\]
\label{lem:phi-derivative}
\end{lemma}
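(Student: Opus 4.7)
The plan is to reduce all three identities to a single chain-rule computation on complete homogeneous symmetric polynomials. For integers $i, j \geq 0$ and variables $u, v$, write $h_k^{[i,j]}(u,v)$ for $h_k(x_1, \ldots, x_{i+j})$ evaluated with the first $i$ arguments set to $u$ and the remaining $j$ arguments set to $v$. The standard generating function
\[
\sum_{k \geq 0} h_k^{[i,j]}(u,v)\, t^k = (1-ut)^{-i}(1-vt)^{-j}
\]
yields, upon differentiating with respect to $u$ and reading off the coefficient of $t^k$, the relation $\partial h_k^{[i,j]}/\partial u = i\, h_{k-1}^{[i+1,j]}$, and symmetrically $\partial h_k^{[i,j]}/\partial v = j\, h_{k-1}^{[i,j+1]}$.

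By Definitions 3.7 and 3.8, the series $\phi_{i,j}(v)$ is obtained from $\phi(v, x_1, \ldots, x_{i+j})$ by substituting $x_1 = \cdots = x_i = f_{(\wdot)}$ and $x_{i+1} = \cdots = x_{i+j} = w f_{(\bdot)}$, so
\[
\phi_{i,j}(v) = \sum_{n \geq i+j} h_{n-(i+j)}^{[i,j]}\bigl(f_{(\wdot)}, w f_{(\bdot)}\bigr) \frac{v^n}{n!}.
\]
Differentiating term by term with respect to $f_{(\wdot)}$ and applying the symmetric-function identity above with $u = f_{(\wdot)}$ produces the factor $i$ and converts each coefficient into $h_{n-(i+j)-1}^{[i+1,j]}$; reindexing so that the lower bound reads $n \geq (i+1)+j$ (the formerly leading $n=i+j$ term vanishes because $h_{-1} = 0$) matches the definition of $\phi_{i+1,j}(v)$ exactly, giving the first claim. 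For the second claim, the chain rule introduces an extra factor of $w$ when $\partial/\partial f_{(\bdot)}$ is applied to the arguments $wf_{(\bdot)}$, and the identity for $\partial_v$ supplies the factor $j$; the total coefficient is $jw$ and the reindexed series is $\phi_{i,j+1}(v)$.

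The third identity is the specialization of the same argument to the one-variable case: $\phi_i(y)$ is $\phi(v, x_1, \ldots, x_i)$ evaluated at $v=y$ and $x_1 = \cdots = x_i = f_{(\bdot)}$, and here the substitution $u = f_{(\bdot)}$ carries no extra constant, so differentiation produces only the factor $i$ and the resulting series is $\phi_{i+1}(y)$. There is no substantive obstacle; the only point requiring care is the bookkeeping of the summation index shift so that the output series lines up precisely with the definitions of $\phi_{i+1,j}$, $\phi_{i,j+1}$, and $\phi_{i+1}$.
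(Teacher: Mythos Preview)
Your proof is correct and follows essentially the same approach as the paper: both arguments use the generating function $(1-tf_{(\wdot)})^{-i}(1-twf_{(\bdot)})^{-j}$ for the complete symmetric functions, differentiate, and match coefficients. Your presentation is slightly more explicit about the chain-rule bookkeeping and the index shift (the paper absorbs these into a single displayed line), but the underlying computation is the same.
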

\begin{proof}
We give a proof of the first equation, noting that the other two may be proven in a similar manner. By definition,
 \begin{align*}
\left[ \frac{v^n}{n!} \right] \phi_{i,j}  &= [t^{n-i-j}] (1-tf_{(\wdot)})^{-i} (1-twf_{(\bdot)})^{-j}.
 \end{align*}
If $i=0$, then $\frac{\partial \phi_{0,j}}{\partial f_{(\wdot)}}=0$. For $i>1$, and any $n\geq i$, 
 \begin{align*}
 \left[ \frac{v^n}{n!} \right]  \frac{\partial \phi_{i,j}}{\partial f_{(\wdot)}} &= [t^{n-i}] \frac{\partial}{\partial f_{(\wdot)}}  (1-tf_{(\wdot)})^{-i} (1-tyf_{(\bdot)})^{-j} 
 = i \left[\frac{v^n}{n!}\right]\phi_{i+1,j}.
 \end{align*}
 Thus,
 \[
 \frac{\partial \phi_{i,j}}{\partial f_{(\wdot)}} = i \phi_{i+1,j}
 \]
 when $i>0$. 
\end{proof}

\subsection{Preimage series for the $(a,b,c,d)$-dipole problem}
 
We now return to the question of determing the preimages of $g_R \phi_i$ and $g_R \phi_S$ under $\partial/\partial y - C'$ and $\partial/\partial v - C''$, respectively.

%========================================================================
\subsubsection{The preimage under $\partial/\partial y - C'$}
\label{sec:ab00-preimages}
%========================================================================

The following series play an important role in determining the solution to the $(a,b,0,0)$-dipole problem. They form a natural set of functions in which the $(a,b,0,0)$-dipole series may be expressed.
\begin{defn}[Preimage series for $\partial/\partial y - C'$]
Let $\tau_{R,k}(y)$ be the unique solution to the partial differential equation
\begin{equation}
\label{eqn:tauRk-definition}
\left(\frac{\partial}{\partial y} - C'\right) \tau_{R,k} = g_{\bdot R} \phi_k(y).
\end{equation}
\end{defn}
These functions may be expressed as a sum indexed by the following combinatorial objects.
\begin{defn}
Let $R \in \{\bdot,\wdot\}^*$ be a binary string. An ordered sequence $\strc = (\strc_1,\strc_2,\ldots,\strc_i)$ such that each $\strc_i\in \{\bdot,\wdot\}^*\setminus \epsilon $ and $\strc_1\strc_2\cdots\strc_i = R$ is called a string composition of $R$ into $k$ parts. Let $\mathcal{C}_i(R)$ denote the set of string compositions of $R$ into $i$ parts. For a part $\strc_k$ of a string composition, $\strc_{k,1}$ shall denote the first symbol of $\strc_k$.
\end{defn}
The empty string in  $\{\bdot,\wdot\}^*$ is denoted by $\epsilon$. We then have the following expression for $\tau_{R,k}$.
\begin{thm}
\label{thm:tau-nonrecursive}
Let $R\in \{\bdot,\wdot\}^*\setminus \epsilon$, and $k\geq 0$. Then
\begin{equation}
\label{eqn:tau-nonrecursive}
\tau_{R,k} = \sum_{i\geq 1} \phi_{k+i} \sum_{(\strc_1,\ldots,\strc_i) \in\mathcal{C}_i(R)} g_{\bdot \strc_1} \prod_{2\leq j\leq i} f_{(\bdot \strc_j)}.
\end{equation}
\end{thm}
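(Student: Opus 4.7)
The plan is to verify directly that the right-hand side of \eqref{eqn:tau-nonrecursive}, call it $T_{R,k}$, satisfies the defining equation \eqref{eqn:tauRk-definition} together with the natural initial condition $T_{R,k}|_{y=0}=0$; uniqueness of the solution (as a formal power series in $y$ with zero constant term) follows at once because the PDE rewrites coefficient-by-coefficient as the triangular recursion $(n+1)[y^{n+1}]\tau_{R,k} = C'[y^n]\tau_{R,k} + g_{\bdot R}[y^n]\phi_k$, which determines $\tau_{R,k}$ uniquely from $\tau_{R,k}|_{y=0}=0$. The initial condition for $T_{R,k}$ is immediate: every $\phi_{k+i}$ with $i\geq 1$ vanishes at $y=0$ since $\phi_j(y)$ has lowest-order term $y^j/j!$.

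The computational core is the comparison of $\partial T_{R,k}/\partial y$ and $C'T_{R,k}$. Corollary \ref{cor:phi-second-specialization} gives
\[
\frac{\partial T_{R,k}}{\partial y} = f_{(\bdot)}T_{R,k} + \sum_{i\geq 1} \phi_{k+i-1} \sum_{(\strc_1,\ldots,\strc_i)\in \mathcal{C}_i(R)} g_{\bdot\strc_1}\prod_{2\leq j\leq i} f_{(\bdot\strc_j)}.
\]
For the action of $C'$, note that for any binary string $\sigma=s_1\cdots s_\ell$, unwinding the definition yields
\[
C'(g_{\bdot\sigma}) = \sum_{\substack{\sigma=UV\\ U\neq\epsilon}} g_{\bdot U} f_{(\bdot V)},
\]
with the convention $f_{(\bdot\epsilon)}=f_{(\bdot)}$; the sum ranges over decompositions in which $U$ is non-empty and $V$ is allowed to be empty.

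Applying this identity inside $T_{R,k}$ and separating the $V=\epsilon$ contribution (which rebuilds $f_{(\bdot)}T_{R,k}$) from the $V\neq\epsilon$ contribution, the decisive combinatorial observation is the bijection between pairs (composition of $R$ into $i$ parts with a further splitting of the first part into two non-empty pieces) and compositions of $R$ into $i+1$ parts: the pair $((\strc_1,\ldots,\strc_i),\,\strc_1=UV)$ with both $U,V$ non-empty corresponds uniquely to $(U,V,\strc_2,\ldots,\strc_i)\in\mathcal{C}_{i+1}(R)$. After reindexing $i'=i+1$, the $V\neq\epsilon$ part of $C'T_{R,k}$ reads
\[
\sum_{i'\geq 2} \phi_{k+i'-1} \sum_{(\strc_1,\ldots,\strc_{i'})\in\mathcal{C}_{i'}(R)} g_{\bdot\strc_1}\prod_{2\leq j\leq i'} f_{(\bdot\strc_j)}.
\]

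Subtracting gives
\[
\left(\frac{\partial}{\partial y}-C'\right)T_{R,k} = \phi_k\!\!\sum_{(\strc_1)\in\mathcal{C}_1(R)}\!\! g_{\bdot\strc_1} = g_{\bdot R}\phi_k,
\]
the $f_{(\bdot)}T_{R,k}$ terms cancelling and the $i\geq 2$ terms telescoping against the shifted sum, leaving only the $i=1$ summand. This completes the verification, and uniqueness then finishes the proof. The main obstacle is the bookkeeping in the bijection above, but once one sees that splitting the first part of a composition is the operation on compositions that mirrors the cut operator's action on $g_{\bdot\strc_1}$, the telescoping is transparent.
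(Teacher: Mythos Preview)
Your proof is correct and follows essentially the same route as the paper: verify that the proposed expression is annihilated by $\partial/\partial y - C'$ up to the term $g_{\bdot R}\phi_k$, using Corollary~\ref{cor:phi-second-specialization} for the derivative, the splitting-the-first-part bijection $\mathcal{C}_i(R)\times\{\text{splits}\}\to\mathcal{C}_{i+1}(R)$ for the action of $C'$, and then invoke uniqueness via the vanishing at $y=0$. Your bijection is stated a bit more cleanly than the paper's relabelling, and you make the uniqueness argument explicit, but the substance is identical.
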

\begin{proof}
The strategy for determining $\tau_{R,k}$ described in Section \ref{sec:strategy} leads to the following recursion for $\tau_{R,k}$:
\begin{equation}
\label{eqn:tau-recursion}
\tau_{R,k} = g_{\bdot R} \phi_{k+1} + \sum_{1\leq i \leq \ell(R)-1} f_{(\bdot R_{i+1}\cdots R_{\ell(R)})}\tau_{ R_1\cdots R_i,k+1}.
\end{equation}
The expression given in (\ref{eqn:tau-nonrecursive}) is the closed form of this recursion. This will be proven by demonstrating that applying $\frac{\partial}{\partial y}- C'$ to the right hand side of (\ref{eqn:tau-nonrecursive}) yields $g_{\bdot R} \phi_k(y)$. 

For a string composition $\strc\in \mathcal{C}_i(R)$, let
\[
W_i(\strc):= \prod_{2\leq j\leq i} f_{(\bdot\strc_j)}.
\]
Applying $C'$ to the right side of Equation (\ref{eqn:tau-nonrecursive}) gives
\begin{align*}
& \sum_{i\geq 1} \phi_{k+i} \sum_{(\strc_1,\ldots,\strc_i) \in\mathcal{C}_i(R)} \sum_{1\leq n\leq \ell(\strc_1)} g_{\bdot \strc_{1,1}\cdots \strc_{1,n}} f_{(\bdot \strc_{1,n+1}\cdots \strc_{1,\ell(\strc)})} W_i(\strc) \\
&\quad = f_{(\bdot)} \sum_{i\geq 1} \phi_{k+i}  \sum_{(\strc_1,\ldots,\strc_i) \in\mathcal{C}_i(R)} g_{\bdot \strc_1} W_i(\strc) \\
&\qquad +  \sum_{i\geq 1} \phi_{k+i} \sum_{(\strc_1,\ldots,\strc_i) \in\mathcal{C}_i(R)} \sum_{1\leq n\leq \ell(\strc_1)-1} g_{\bdot \strc_{1,1}\cdots \strc_{1,n}} f_{(\bdot \strc_{1,n+1}\cdots \strc_{1,\ell(\strc)})} W_i(\strc) \\
&\quad = f_{(\bdot)} \sum_{i\geq 1} \phi_{k+i}  \sum_{(\strc_1,\ldots,\strc_i) \in\mathcal{C}_i(R)} g_{\bdot \strc_1} W_i(\strc) \\
&\qquad + \sum_{i\geq 1} \phi_{k+i} \sum_{(\strc_1,\ldots,\strc_{i+1})\in \mathcal{C}_{i+1}(R)} g_{\bdot \strc_1} W_{i+1}(\strc),
\end{align*}
by relabelling $\strc_{1,1}\cdots \strc_{1,n}$ to be $\strc_1$, and $\strc_{1,n+1}\cdots \strc_{1,\ell(\strc)}$ to be $\strc_{i+1}$. Thus,
\begin{align*}
&\left( \frac{\partial}{\partial y} - C' \right) \sum_{i\geq 1} \phi_{k+i} \sum_{(\strc_1,\ldots,\strc_i) \in\mathcal{C}_i(R)} g_{\bdot \strc_1} W_i(\strc) \\
&\quad= \sum_{i\geq 1} \left(\frac{\partial \phi_{k+i}}{\partial y} - f_{(\bdot)} \phi_{k+i} \right)\sum_{(\strc_1,\ldots,\strc_i) \in\mathcal{C}_i(R)} g_{\bdot \strc_1} W_i(\strc) \\
&\qquad -  \sum_{i\geq 1} \phi_{k+i} \sum_{(\strc_1,\ldots,\strc_{i+1})\in \mathcal{C}_{i+1}(R)} g_{\bdot \strc_1} W_{i+1}(\strc) \\
&\quad = \sum_{i\geq 1} \phi_{k+i-1 } \sum_{(\strc_1,\ldots,\strc_i) \in\mathcal{C}_i(R)} g_{\bdot \strc_1} W_i(\strc)  \\
&\qquad - \sum_{i\geq 2} \phi_{k+i-1} \sum_{(\strc_1,\ldots,\strc_{i})\in \mathcal{C}_{i}(R)} g_{\bdot \strc_1} W_i(\strc)\\
&\quad = \phi_k g_{\bdot R}.
\end{align*}
Since the right side of Equation (\ref{eqn:tau-nonrecursive}) vanishes when $x=0$, then by uniqueness of the solution to the partial differential equation, it must be equal to $\tau_{R,k}$. 
\end{proof}

%========================================================================
\subsubsection{The preimage under $\partial/\partial v - C''$}
\label{sec:abcd-preimages}
%========================================================================

We now describe the preimages of $g_R \phi_S$ under the operator $\partial/\partial v - C''$, starting by introducing the following notation.
\begin{defn}[Preimage series for $\partial/\partial v - C''$]
Define the functions $\tau_{R,S}$ to be the unique solution to the partial differential equation
\[
\left( \frac{\partial}{\partial v} - C'' \right) \tau_{R,S} = g_{\bdot R} \phi_S(v)
\]
which satisfies the initial condition $\tau_{R,S}|_{v=0}=0$. 
\end{defn}
Notationally, these are distinguished from the $\tau$-functions in the preceding section by the fact that they are indexed by a pair of binary strings, as opposed to a binary string and an integer. For a string composition $(\strc_1,\ldots,\strc_k)$ of $R$, let $\strc_{j,1}$ denote the first symbol of $\strc_j$, and define
\[
\iota(\strc_1,\ldots,\strc_k):= |\{2\leq j\leq k : \strc_{j,1} = \bdot\}|.
\]
Then the functions $\tau_{R,S}$ may be expressed as follows.
\begin{thm}
\label{thm:sigma-nonrecursive}
Let $R\in \{\bdot,\wdot\}^*\setminus \epsilon$. Let $S\in \{\bdot,\wdot\}^*$. Then
\begin{equation}
\label{eqn:sigma-nonrecursive}
\tau_{R,S} = \sum_{k\geq 1} \left( \localvari_{1,k} + \localvari_{2,k}\right),
\end{equation}
where 
\begin{align*}
\localvari_{1,k} &= \sum_{(\strc_1,\ldots,\strc_k) \in \mathcal{C}_k(R)} w^{\iota(\strc_1,\ldots,\strc_k)}g_{\bdot \strc_k}\Omega(\strc) \phi_{\strc_{k,1}\cdots \strc_{1,1}S},\\
\localvari_{2,k} &= \sum_{(\strc_1,\ldots,\strc_k) \in \mathcal{C}_k(R)} w^{\iota(\strc_1,\ldots,\strc_k)+1}g_{\bdot} f_{(\bdot \strc_k)} \Omega(\strc) \phi_{\bdot \strc_{k,1}\cdots \strc_{1,1}S} ,
\end{align*}
and
\[
\Omega(\strc) := \prod_{1\leq j\leq k-1} f_{(\strc_j \strc_{j+1,1})}.
\]
Furthermore, $\tau_{\epsilon,S} = g_{\bdot} \phi_{\bdot S}$. 
\end{thm}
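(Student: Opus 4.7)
The proof strategy is to verify Equation (\ref{eqn:sigma-nonrecursive}) directly by substituting the claimed expression for $\tau_{R,S}$ into the defining PDE and confirming that the result is $g_{\bdot R}\phi_S(v)$. The initial condition $\tau_{R,S}|_{v=0}=0$ is then automatic, since Definition \ref{defn:general-phi} gives $\phi_T|_{v=0}=0$ whenever $T$ is nonempty, and every $\phi$-factor appearing in $\localvari_{1,k}$ and $\localvari_{2,k}$ has nonempty index; uniqueness of the solution then identifies the claimed closed form with $\tau_{R,S}$. The boundary case $\tau_{\epsilon,S}=g_\bdot\phi_{\bdot S}$ can be dispatched first: since $C''g_\bdot = wg_\bdot f_{(\bdot)}$, Corollary \ref{cor:phi-first-specialization} (used together with the symmetry of $\phi$ in its arguments) immediately yields $(\partial_v-C'')(g_\bdot\phi_{\bdot S})=g_\bdot\phi_S$.

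For general $R$, the first step is to expand $C'' g_{\bdot\strc_k}$ using the formula in Theorem \ref{thm:abcd-PDE}, splitting it into three natural pieces: the $i=2$ contribution, equal to $w^{\delta_{\strc_{k,1},\bdot}}g_{\bdot\strc_k}f_{(\strc_{k,1})}$; the sum over $i\in\{3,\ldots,\ell(\strc_k)+1\}$, which I reindex by $j = i-1 \in\{2,\ldots,\ell(\strc_k)\}$; and the boundary term $wg_\bdot f_{(\bdot\strc_k)}$. Applied to $\localvari_{1,k}$, the first piece combines with $\partial_v\phi_{\strc_{k,1}\cdots\strc_{1,1}S}$ via Corollary \ref{cor:phi-first-specialization} to peel off the leading symbol $\strc_{k,1}$, producing the residue $\sum_{\strc\in\mathcal{C}_k(R)} w^{\iota(\strc)}\Omega(\strc)g_{\bdot\strc_k}\phi_{\strc_{k-1,1}\cdots\strc_{1,1}S}$, which equals $g_{\bdot R}\phi_S$ precisely when $k=1$. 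A short direct computation using $C'' g_\bdot = wg_\bdot f_{(\bdot)}$ then shows that $(\partial_v-C'')\localvari_{2,k}$ exactly cancels the boundary-term contribution $-w^{\iota(\strc)+1}\Omega(\strc)g_\bdot f_{(\bdot\strc_k)}\phi_{\strc_{k,1}\cdots\strc_{1,1}S}$ arising from $-C''\localvari_{1,k}$.

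The combinatorial heart of the argument is a bijection between pairs $(\strc,j)$ with $\strc\in\mathcal{C}_k(R)$ and $j\in\{2,\ldots,\ell(\strc_k)\}$, and compositions $\strc'\in\mathcal{C}_{k+1}(R)$, obtained by splitting the last part of $\strc$ at position $j$ so that $\strc'_k = \strc_{k,1}\cdots\strc_{k,j-1}$ and $\strc'_{k+1}=\strc_{k,j}\cdots\strc_{k,\ell(\strc_k)}$. Under this bijection one verifies the identities $\iota(\strc')=\iota(\strc)+\delta_{\strc_{k,j},\bdot}$, $\Omega(\strc')=\Omega(\strc)\cdot f_{(\strc_{k,1}\cdots\strc_{k,j})}$, and $\strc'_{i,1}=\strc_{i,1}$ for $1\leq i\leq k$ (while $\strc'_{k+1,1}=\strc_{k,j}$). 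The last of these ensures $\phi_{\strc'_{k,1}\cdots\strc'_{1,1}S}=\phi_{\strc_{k,1}\cdots\strc_{1,1}S}$, so that the level-$(k+1)$ peeled residue is termwise equal (with opposite sign) to the $i\geq 3$ contribution from $-C''\localvari_{1,k}$. Summing over $k\geq 1$ therefore produces a telescoping cancellation leaving only the $k=1$ residue $g_{\bdot R}\phi_S$, as required.

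The main obstacle I anticipate is the careful tracking of $\phi$-indices through the bijection: the $i=2$ term of $C''g_{\bdot\strc_k}$ is precisely what supplies the weight $w^{\delta_{\strc_{k,1},\bdot}}f_{(\strc_{k,1})}$ needed by Corollary \ref{cor:phi-first-specialization} to peel off the leading symbol $\strc_{k,1}$, whereas the $i\geq 3$ terms arrive with the $\phi$-index still \emph{unpeeled} and must be matched against the already-peeled $\phi_{\strc'_{k,1}\cdots\strc'_{1,1}S}$ produced at the next level up. Once this alignment is made explicit, the remaining verification is direct algebra.
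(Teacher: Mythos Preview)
Your proposal is correct and follows essentially the same argument as the paper's own proof: both verify the closed form by applying $\partial/\partial v - C''$ directly, splitting $C''g_{\bdot\strc_k}$ into the same three pieces (the ``$i=2$'' term that combines with $\partial_v$ via Corollary~\ref{cor:phi-first-specialization} to peel a symbol from the $\phi$-index, the boundary term $wg_\bdot f_{(\bdot\strc_k)}$ cancelled by $(\partial_v - C'')\localvari_{2,k}$, and the remaining sum handled by the refinement bijection $\mathcal{C}_k(R)\times\{2,\ldots,\ell(\strc_k)\}\to\mathcal{C}_{k+1}(R)$), and then telescoping to leave only the $k=1$ residue $g_{\bdot R}\phi_S$. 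Your explicit invocation of the symmetry of $\phi$ in its arguments (needed because Corollary~\ref{cor:phi-first-specialization} is stated for a symbol appended at the end, while here symbols are prepended) is a point the paper leaves implicit.
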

\begin{proof}
The verification that $\tau_{\epsilon,S} = g_{\bdot} \phi_{\bdot S}$ is a routine calculation using Corollary~\ref{cor:phi-first-specialization}.
As in the case for $\tau_{R,k}$, the functions $\tau_{R,S}$ satisfy the recursion
\begin{equation}
\label{eqn:sigma-recursion}
\tau_{tR,S}(v) = \phi_{tS}(v) g_{ \bdot tR} + \sum_{1\leq i \leq \ell(R)} f_{(tR_1\cdots R_i)} w^{\delta_{R_i,\bdot}}\tau_{R_i\cdots R_{\ell(R)},tS} + w f_{(\bdot tR)} \tau_{\epsilon, tS},
\end{equation}
the closed form of which is given by (\ref{eqn:sigma-nonrecursive}). We prove this by applying $\frac{\partial}{\partial v} - C''$ to the right side of (\ref{eqn:sigma-nonrecursive}). 

Applying $C''$ to the first of these expressions gives
\begin{align*}
C'' \localvari_{1,k} 
&= \localvarii_{1,k} + \localvarii_{2,k} + \localvarii_{3,k},
\end{align*}
where
\begin{align*}
\localvarii_{1,k} &= \sum_{(\strc_1,\ldots,\strc_k) \in \mathcal{C}_k(R)} w^{\iota(\strc_1,\ldots,\strc_k) +1} g_{\bdot} f_{(\bdot \strc_k)} \Omega(\strc) \phi_{\strc_{k,1}\cdots \strc_{1,1}S},\\
\localvarii_{2,k} &=  \sum_{(\strc_1,\ldots,\strc_k) \in \mathcal{C}_k(R)} w^{\iota(\strc_1,\ldots,\strc_k)  + \delta_{\strc_{k,1},\bdot} }g_{\bdot \strc_k} f_{(\strc_{k,1})} \Omega(\strc) \phi_{\strc_{k,1}\cdots \strc_{1,1}S},
\end{align*}
and
\[
\localvarii_{3,k} =  \sum_{(\strc_1,\ldots,\strc_k) \in \mathcal{C}_k(R)} \sum_{2\leq i\leq \ell(\strc_k)} w^{\iota(\strc_1,\ldots,\strc_k)  + \delta_{\strc_{k,i},\bdot}} g_{\bdot \strc_{k,i}\cdots \strc_{k,\ell(\strc_k)}} f_{(\strc_{k,1}\cdots\strc_{k,i})} \Omega(\strc) \phi_{\strc_{k,1}\cdots \strc_{1,1}S} .
\]
Applying Corollary \ref{cor:phi-first-specialization},
\begin{align*}
\frac{\partial}{\partial v} \localvari_{1,k} - \localvarii_{2,k} &= \sum_{(\strc_1,\ldots,\strc_k) \in \mathcal{C}_k(R)} w^{\iota(\strc_1,\ldots,\strc_k)} g_{\bdot \strc_k} \Omega(\strc)  \\
&\qquad\qquad\qquad\qquad\qquad \times \left(\frac{\partial}{\partial v} \phi_{\strc_{k,1}\cdots \strc_{1,1}S} - w^{\delta_{\strc_{k,1}},\bdot} f_{(\strc_{k,1})} \phi_{\strc_{k,1}\cdots \strc_{1,1}S} \right)  \\
&=  \sum_{(\strc_1,\ldots,\strc_k) \in \mathcal{C}_k(R)} w^{\iota(\strc_1,\ldots,\strc_k)} g_{\bdot \strc_k} \Omega(\strc) \phi_{\strc_{k-1,1}\cdots \strc_{1,1} S}.
\end{align*}
To analyze $\localvarii_{k,3}$, given any $(\strc_1,\ldots,\strc_k)\in \mathcal{C}_k(R)$ and $2\leq i\leq \ell(\strc_k)$, define the string composition $(\strc'_1,\ldots,\strc'_k,\strc'_{k+1}) \in \mathcal{C}_{k+1}(R)$ by $\strc'_j = \strc_j$ when $1\leq j\leq k-1$, $\strc'_k = \strc_{k,1}\cdots \strc_{k,i-1}$ and $\strc'_{k+1} = \strc_{k,i}\cdots \strc_{k,\ell(\strc_k)}$. Since $c(\strc'_1,\ldots,\strc'_{k+1}) = c(\strc_1,\ldots,\strc_k)  + \delta_{\strc_{k,i},\bdot}$, then
\[
\localvarii_{3,k} = \sum_{(\strc'_1,\ldots,\strc'_{k+1})\in \mathcal{C}_{k+1}(R)} w^{c(\strc'_1,\ldots,\strc'_{k+1})} g_{\bdot \strc'_{k+1}} \phi_{\strc'_{k,1}\cdots \strc'_{1,1}S} \prod_{1\leq j\leq k} f_{(\strc'_j \strc'_{j+1,1})}.
\]
In other words, for $k\geq 2$,
\[
\frac{\partial}{\partial v} \localvari_{1,k} - \localvarii_{2,k} - \localvarii_{3,k-1} = 0.
\]

Next, apply $\left(\frac{\partial}{\partial v} -C'' \right)$ to $\localvari_{2,k}$ and use Corollary \ref{cor:phi-first-specialization}  to obtain
\begin{align*}
\left(\frac{\partial}{\partial v} -C''\right) \localvari_{2,k}  &=  \sum_{(\strc_1,\ldots,\strc_k) \in \mathcal{C}_k(R)} w^{c(\strc_1,\ldots,\strc_k)+1}g_{\bdot} f_{(\bdot \strc_k)} \Omega(\strc)\\
&\qquad\qquad\qquad\qquad \times \left( \frac{\partial}{\partial v} \phi_{\bdot \strc_{k,1}\cdots \strc_{1,1}S} - wf_{(\bdot)} \phi_{\bdot \strc_{k,1}\cdots \strc_{1,1}S}\right) \\
&=  \sum_{(\strc_1,\ldots,\strc_k) \in \mathcal{C}_k(R)} w^{c(\strc_1,\ldots,\strc_k)+1}g_{\bdot} f_{(\bdot \strc_k)} \Omega(\strc) \phi_{\strc_{k,1}\cdots \strc_{1,1}S}  \\
&= \localvarii_{1,k}.
\end{align*}

Consequently, applying $\left( \frac{\partial}{\partial v} - C'' \right)$ to the right side of Equation (\ref{eqn:sigma-nonrecursive}) yields
\begin{align*}
\left( \frac{\partial}{\partial v} - C'' \right) \sum_{k\geq 1} \localvari_{1,k} + \localvari_{2,k} &= \sum_{k\geq 1}\left( \frac{\partial}{\partial v} \localvari_{1,k}  - \localvarii_{1,k} - \localvarii_{2,k} - \localvarii_{3,k}+ \left(\frac{\partial}{\partial v} - C'' \right) \localvari_{2,k}\right) \\
&=  \frac{\partial}{\partial v}\localvari_{1,1} - \localvarii_{2,1} \\
&= g_{\bdot R}\frac{\partial}{\partial v} \phi_{R_1S}  - w^{\delta_{R_1,\bdot}} g_{\bdot R} f_{(R_1)} \phi_{R_1S} \\
&= g_{\bdot R} \phi_S,
\end{align*}
again using Corollary \ref{cor:phi-first-specialization}. Since the right side of Equation (\ref{eqn:sigma-nonrecursive}) vanishes when $v=0$, it is the unique solution to $\left( \frac{\partial}{\partial v} - C'' \right) \tau_{R,S} = g_{\bdot R} \phi_S.$
\end{proof}

As a demonstration of Theorem \ref{thm:sigma-nonrecursive}, Table \ref{table:tau-contributions} gives the contribution to $\tau_{R,S}$ from various string compositions when $\ell(R)\leq 2$. The function $\tau_{R,S}$ is the sum of the terms in the third column which correspond to it.

\begin{table}
\[
\begin{array}{c|c|c}
\tau_{R,S} & \text{String Composition} & \text{Contribution to } \tau_{R,S} \\
\hline
\tau_{\bdot,S} & (\bdot) & g_{\bdot\bdot} \phi_{\bdot S} + wg_{\bdot} f_{(\bdot\bdot)} \phi_{\bdot\bdot S} \\
\hline
\tau_{\wdot,S} & (\wdot) & g_{\bdot\wdot} \phi_{\wdot S} + wg_{\bdot} f_{(\bdot \wdot)} \phi_{\bdot \wdot S} \\
\hline
\tau_{\bdot\bdot,S} & (\bdot\bdot) & g_{\bdot\bdot\bdot} \phi_{\bdot S} + wg_{\bdot} f_{(\bdot\bdot\bdot)} \phi_{\bdot\bdot S} \\ 
& (\bdot,\bdot) & w (g_{\bdot\bdot} \phi_{\bdot\bdot S} + w g_{\bdot} f_{(\bdot\bdot)} \phi_{\bdot\bdot \bdot S} ) f_{(\bdot\bdot)} \\
\hline
\tau_{\wdot\bdot,S} & (\wdot \bdot) & g_{\bdot\wdot\bdot}\phi_{\wdot S} + wg_{\bdot} f_{(\bdot\wdot\bdot)}\phi_{\bdot\wdot S} \\
& (\wdot, \bdot) & w(g_{\bdot\bdot} \phi_{\bdot \wdot S} + wg_{\bdot} f_{(\bdot \bdot)} \phi_{\bdot \bdot\wdot S}) f_{(\wdot \bdot)} \\
\hline
\tau_{\bdot\wdot,S} & (\bdot\wdot) &  g_{\bdot\bdot\wdot} \phi_{\bdot S} + wg_{\bdot} f_{(\bdot\bdot\wdot)}\phi_{\bdot\bdot S} \\
& (\bdot, \wdot) & (g_{\bdot\wdot} \phi_{\wdot\bdot S} + w g_{\bdot} f_{(\bdot \wdot)} \phi_{\bdot\wdot\bdot S}) f_{(\bdot \wdot)} \\
\hline
\tau_{\wdot\wdot,S} & (\wdot\wdot) & g_{\bdot\wdot\wdot} \phi_{\wdot S} + w g_{\bdot} f_{(\bdot \wdot\wdot)}\phi_{\bdot\wdot S} \\
& (\wdot,\wdot) & (g_{\bdot \wdot}\phi_{\wdot\wdot S} + w g_{\bdot} f_{(\bdot\wdot)} \phi_{\bdot\wdot\wdot S}) f_{(\wdot \wdot)}
\end{array}
\]
\caption{Contributions to $\tau_{R,S}$ from various small string compositions.} 
\label{table:tau-contributions}
\end{table}

%========================================================================
\section{Solutions for Low Genera}
\label{sec:small-genus-solutions}
%========================================================================

In this section, we use the strategy described in Section \ref{sec:strategy} to give explicit expressions for the $(a,b,c,d)$-dipole series $\langle \serC^{(g)}\rangle$, when $g=1$ and $g=2$, as a linear combination of the functions $\langle \tau_{R,S}\rangle$. We give $\langle \serC^{(1)}\rangle$ in Corollary \ref{cor:abcd-torus-specialized}, and we discuss the derivation of $\langle \serC^{(2)}\rangle$ in Section \ref{subsec:double-torus-solution}, with the final result appearing in Appendix \ref{sec:appendix-A}. Extracting coefficients from these series to obtain expressions for the number of $(a,b,c,d)$ dipoles on the given surface. These coefficients may be used to determine the number of $(p,q,n)$-dipoles on the surface using the formula
\[
[r^ps^qt^nu^{2g}] \Phi = \sum_{0\leq b \leq p-1}  \left[\frac{v^{n-q-1}}{(n-q-1)!}\frac{y^b}{b!} w^{p-1-b}\frac{x^{q-b}}{(q-b)!}u^{2g}\right] \langle \serC \rangle,
\]
given by Proposition \ref{prop:abcd-to-pqn}.

\subsection{Solutions for the Torus}
\label{subsec:torus-solution}

In order to determine $\serC^{(1)}$, we first require the generating series for $(a,b,0,0)$-dipoles on the torus.
\begin{lemma}
The generating series for $(a,b,0,0)$-dipoles on the torus is 
\begin{align*}
\serB^{(1)} &= \tau_{\bdot\wdot,0} x\exp(x f_{(\wdot)}) + g_{\bdot} \left( \sum_{n\geq 3} \frac{x^n}{n!}  \psi_{(3,1^{n-3})} f_{(\wdot\wdot\wdot)}f_{(\wdot)}^{n-3} + \sum_{n\geq 4}  \psi_{(2^2,1^{n-4})} f_{(\wdot\wdot)}^2 f_{(\wdot)}^{n-4}\right),\end{align*}
where $\psi_{\lambda}$ is given by Equation (\ref{eqn:psi-expression}).
\label{lemma:ab00-on-torus}
\end{lemma}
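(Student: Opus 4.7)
The plan is to apply the recursive strategy outlined in Section~\ref{sec:strategy}. Write $\serB^{(1)} = \serA^{(1)} + \hat{\serB}^{(1)}$, where $\serA^{(1)} = \serB^{(1)}|_{y=0}$ is the genus-$1$ part of the initial condition and $\hat{\serB}^{(1)}$ is the unique solution to Equation~(\ref{eqn:ab00-diff-recursion-2}) at $g=1$, namely
\[
\left( \frac{\partial}{\partial y} - C' \right) \hat{\serB}^{(1)} = J' \serB^{(0)}, \qquad \hat{\serB}^{(1)}|_{y=0} = 0.
\]

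First I would compute $J' \serB^{(0)}$ directly from $\serB^{(0)} = g_{\bdot}(\exp(xf_{(\wdot)}) - 1)$. The only $g$-type indeterminate appearing is $g_{\bdot}$ and the only $f$-type indeterminate is $f_{(\wdot)}$, so only the term in $J'$ indexed by $R = \bdot$ and $(S) = (\wdot)$ contributes. Here $R_1 = \bdot$ and the cyclic class $(\wdot)$ has the single representative $\wdot$, so the inner sum of $J'$ collapses to $g_{\bdot\bdot\wdot}$. Combined with $\partial^2 \serB^{(0)} / \partial g_{\bdot} \partial f_{(\wdot)} = x\exp(xf_{(\wdot)})$, this yields $J' \serB^{(0)} = g_{\bdot\bdot\wdot} \cdot x \exp(xf_{(\wdot)})$.

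Next I would solve the resulting equation. Since $\partial/\partial y - C'$ treats $x$ and all $f$-type indeterminates as constants and acts linearly, and since $\tau_{\bdot\wdot,0}$ satisfies $(\partial/\partial y - C') \tau_{\bdot\wdot,0} = g_{\bdot\bdot\wdot} \phi_0(y) = g_{\bdot\bdot\wdot}$ by Equation~(\ref{eqn:tauRk-definition}) together with $\phi_0 \equiv 1$, uniqueness of solutions forces
\[
\hat{\serB}^{(1)} = \tau_{\bdot\wdot,0}(y) \cdot x\exp(xf_{(\wdot)}).
\]
The initial condition $\hat{\serB}^{(1)}|_{y=0} = 0$ is inherited from $\tau_{\bdot\wdot,0}|_{y=0} = 0$, which follows from the explicit formula in Theorem~\ref{thm:tau-nonrecursive} because every $\phi_{k+i}$ with $k+i \geq 1$ vanishes at $y=0$ by Definition~\ref{defn:general-phi}.

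Finally I would identify $\serA^{(1)}$ using Equation~(\ref{eqn:Psi-form-2}) by extracting $[u^{2}]$:
\[
\serA^{(1)} = g_{\bdot} \sum_{n \geq 1} \frac{x^n}{n!} \sum_{\substack{\lambda \vdash n \\ \ell(\lambda) = n-2}} \psi_{\lambda} f_{\lambda}.
\]
The only partitions $\lambda \vdash n$ with exactly $n-2$ parts are $(3,1^{n-3})$ for $n \geq 3$ and $(2^2, 1^{n-4})$ for $n \geq 4$, whose $f$-monomials are $f_{(\wdot\wdot\wdot)} f_{(\wdot)}^{n-3}$ and $f_{(\wdot\wdot)}^2 f_{(\wdot)}^{n-4}$ respectively, matching the remaining two summands in the lemma.

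The main obstacle is the bookkeeping in the computation of $J' \serB^{(0)}$: one must carefully verify that no other indices $R$ or $(S)$ contribute and that the cyclic sum over representatives of $(\wdot)$ produces exactly the single term $g_{\bdot\bdot\wdot}$. Once that is in order, the linearity of $\partial/\partial y - C'$ with respect to $x$ and $f_{(\wdot)}$ makes the factorization $\hat{\serB}^{(1)} = x\exp(xf_{(\wdot)})\tau_{\bdot\wdot,0}$ immediate, and the enumeration of partitions with $n-2$ parts is elementary.
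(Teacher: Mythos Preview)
Your proposal is correct and follows essentially the same route as the paper: compute $J'\serB^{(0)} = g_{\bdot\bdot\wdot}\,x\exp(xf_{(\wdot)})$, recognize the solution of $(\partial/\partial y - C')\hat{\serB}^{(1)} = g_{\bdot\bdot\wdot}\,x\exp(xf_{(\wdot)})$ as $\tau_{\bdot\wdot,0}\,x\exp(xf_{(\wdot)})$, and then read off the $y=0$ part from the genus-$1$ term of $\serA$ by listing the two partitions of $n$ with $n-2$ parts. Your write-up is in fact a bit more explicit than the paper's in justifying why only $R=\bdot$, $(S)=(\wdot)$ contribute to $J'\serB^{(0)}$ and why $\tau_{\bdot\wdot,0}$ vanishes at $y=0$.
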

\begin{proof}
When solving for the genus 1 series, Equation (\ref{eqn:ab00-diff-recursion-2}) becomes
\begin{align*}
\left(\frac{\partial}{\partial y} - C'\right) \serB^{(1)} &= J g_{\bdot}(\exp(xf_{(\wdot)})-1) = g_{\bdot\bdot \wdot} x \exp(x f_{(\wdot)}),
\end{align*} 
with initial condition $\hat{\serB}^{(1)}|_{y=0}=0$. By Theorem~\ref{thm:tau-nonrecursive}, the solution to this equation is 
\[
\hat{\serB}^{(1)} = \tau_{\bdot\wdot,0} x\exp(x f_{(\wdot)}),
\]
where
\[
\tau_{\bdot\wdot,0} = \phi_1(y) g_{\bdot\bdot\wdot} + \phi_2(y) g_{\bdot\bdot} f_{(\bdot \wdot)}.
\]
The series $\serB^{(1)}|_{y=0}$ can be obtained from the initial condition given in Lemma~\ref{lemma:ab00-PDE}. Using the fact that genus 1 dipoles with $n$ edges must have half face-degree sequence either $(3,1^{n-3})$ or $(2^2,1^{n-4})$,
\begin{align*}
\serB^{(1)}|_{y=0} &= [u^2] g_{\bdot} \sum_{D \in \mathcal{D}} \frac{x^{n(D)}}{n(D)!} u^{2g(D)} f_{\lambda'(D)} \\
&= g_{\bdot} \left( \sum_{n\geq 3} \frac{x^n}{n!}  \psi_{(3,1^{n-3})} f_{(\wdot\wdot\wdot)}f_{(\wdot)}^{n-3} + \sum_{n\geq 4}  \psi_{(2^2,1^{n-4})} f_{(\wdot\wdot)}^2 f_{(\wdot)}^{n-4}\right).
\end{align*}
 Since $\serB^{(1)} = \hat{\serB}^{(1)} + \serB^{(1)}|_{y=0}$, the result has now been proven.
\end{proof}
Although $\psi_{\lambda}$ is can be determined using Equation (\ref{eqn:psi-expression}), it simplifies computations if it is left unevaluated while solving the differential equations for $(a,b,c,d)$-dipoles. This has the additional benefit  that the results obtained will appear as ``linear combinations of central problems.'' Furthermore, the $\psi_{\lambda}$'s allow us to identify which parts of the solutions arise from the central initial condition and which parts arise from the non-central aspects of the problem.

The generating series for $(a,b,0,0)$-dipoles in the torus may be used to determine the genus 1 solution to the $(a,b,c,d)$-dipole problem, as follows.
\begin{thm}
The generating series for $(a,b,c,d)$-dipoles in the torus is
\begin{align*}
\serC^{(1)} &= (wf_{(\bdot)} \tau_{\wdot\wdot, \bdot} + \tau_{\wdot\wdot,\epsilon}) x\exp(xf_{(\wdot)}) \\
&\quad+ (w^2 \tau_{\bdot\bdot,\bdot} + w^3 f_{(\bdot)} \tau_{\bdot\bdot,\bdot\bdot}) (\exp(xf_{(\wdot)})-1) \\
&\quad+ (wf_{(\bdot\bdot\wdot)} \tau_{\epsilon,\epsilon} + w f_{(\bdot)} \tau_{\bdot\wdot,\epsilon} + f_{(\bdot\wdot)} \tau_{\wdot,\epsilon}) \phi_1(y) x\exp(xf_{(\wdot)}) \\
&\quad+ (wf_{(\bdot\bdot)} \tau_{\epsilon,\epsilon}+ w f_{(\bdot)} \tau_{\bdot,\epsilon} ) f_{(\bdot\wdot)}\phi_2(y)x\exp(xf_{(\wdot)}) \\
&\quad+ wf_{(\bdot)} \tau_{\epsilon,\epsilon} \left( \sum_{n\geq 3} \frac{x^n}{n!}  \psi_{(3,1^{n-3})} f_{(\wdot\wdot\wdot)}f_{(\wdot)}^{n-3} + \sum_{n\geq 4}  \frac{x^n}{n!} \psi_{(2^2,1^{n-4})} f_{(\wdot\wdot)}^2 f_{(\wdot)}^{n-4}\right) + \serB^{(1)},
\end{align*}
where the functions $\tau_{R,S}$ are given by Theorem \ref{thm:sigma-nonrecursive}, and $\serB^{(1)}$ is given by Lemma \ref{lemma:ab00-on-torus}.
\label{thm:abcd-torus}
\end{thm}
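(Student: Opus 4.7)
The plan is to apply the genus-recursive framework of Section \ref{sec:genus-recursion}, specialized to $g=1$. Equation (\ref{eqn:psi-doubleprime-no-ic}) then reduces to
\[
\left(\frac{\partial}{\partial v} - C''\right)\hat{\serC}^{(1)} = J''\serC^{(0)} + C''\serB^{(1)},\qquad \hat{\serC}^{(1)}|_{v=0}=0,
\]
after which $\serC^{(1)} = \hat{\serC}^{(1)} + \serB^{(1)}$. The strategy is to compute the right-hand side explicitly, rewrite it as a sum of source terms of the form $g_{\bdot R}\phi_S(v)\cdot G$ with $G$ constant in $v$ and in every $g$-indeterminate, and then invert using Theorem \ref{thm:sigma-nonrecursive}, which gives the preimage as $\tau_{R,S}\cdot G$ by linearity of $\partial/\partial v - C''$.

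First I would compute $J''\serC^{(0)}$. Summing the defining series yields $\serC^{(0)} = g_{\bdot}(\exp(xf_{(\wdot)})-1)\exp(vwf_{(\bdot)})$, and since only $g_{\bdot}$, $f_{(\wdot)}$, and $f_{(\bdot)}$ occur, only $R=\bdot$ and $(S)\in\{(\wdot),(\bdot)\}$ contribute to the double sum defining $J''$. A direct application of the formula for $J''$ yields one term proportional to $g_{\bdot\wdot\wdot}\,x\exp(xf_{(\wdot)})\exp(vwf_{(\bdot)})$ and a second proportional to $w^2 g_{\bdot\bdot\bdot}\,v(\exp(xf_{(\wdot)})-1)\exp(vwf_{(\bdot)})$. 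Next I would compute $C''\serB^{(1)}$ by applying $C''$ separately to the three groups of monomials in Lemma \ref{lemma:ab00-on-torus}, namely those carrying $g_{\bdot\bdot\wdot}$, $g_{\bdot\bdot}$, and $g_{\bdot}$; since $\ell(R)\le 3$ in every case, this is routine bookkeeping directly from the definition of $C''$.

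The only non-routine step is translating the $v$-dependence produced by $J''\serC^{(0)}$ into $\phi_S(v)$-form. For this I would verify the two identities
\[
\exp(vwf_{(\bdot)}) = \phi_{\epsilon}(v) + wf_{(\bdot)}\phi_{\bdot}(v),\qquad v\exp(vwf_{(\bdot)}) = \phi_{\bdot}(v) + wf_{(\bdot)}\phi_{\bdot\bdot}(v),
\]
each by extracting $[v^n]$ from both sides and comparing with Definition \ref{defn:general-phi}; the $C''\serB^{(1)}$ terms carry only $\phi_1(y)$, $\phi_2(y)$ and constants in $v$, so they already present themselves with $\phi_\epsilon(v)$ as the $v$-factor. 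With the right-hand side now displayed as an explicit finite $\mathbb{C}$-linear combination of source terms $g_{\bdot R}\phi_S(v)\cdot G$, Theorem \ref{thm:sigma-nonrecursive} applied term by term yields $\hat{\serC}^{(1)}$, and adding $\serB^{(1)}$ produces the stated formula. The main obstacle is purely the accounting: one must verify that the five lines in the statement of Theorem \ref{thm:abcd-torus} exhaust all source terms, with the correct coefficients and without double-counting. Once each source term has been identified, the invocation of Theorem \ref{thm:sigma-nonrecursive} is mechanical.
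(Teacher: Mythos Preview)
Your proposal is correct and follows essentially the same route as the paper. The paper likewise computes $J''\serC^{(0)}$ and $C''\serB^{(1)}$ explicitly, writes the result as a finite sum of terms $g_{\bdot R}\phi_S(v)\cdot G$, and then replaces each such term by $\tau_{R,S}\cdot G$; the only cosmetic difference is that where you verify the identities $\exp(vwf_{(\bdot)})=1+wf_{(\bdot)}\phi_{\bdot}(v)$ and $v\exp(vwf_{(\bdot)})=\phi_{\bdot}(v)+wf_{(\bdot)}\phi_{\bdot\bdot}(v)$ directly, the paper cites Lemma~\ref{lem:phi-derivative} to the same effect.
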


\begin{proof}
From Equation (\ref{eqn:psi-doubleprime-no-ic}), to determine the generating series for $(a,b,c,d)$-dipoles on the torus, first, determine $J'' \serC^{(0)} + C''\serB^{(1)}$. Using Lemma \ref{lem:phi-derivative},
\begin{align*}
J'' \serC^{(0)}&= g_{\bdot\wdot\wdot}(wf_{(\bdot)}\phi_{\bdot}(v) + 1) x\exp(xf_{(\wdot)}) \\
&\quad+g_{\bdot\bdot\bdot}(w^2 \phi_{\bdot}(v) + w^3 f_{(\bdot)} \phi_{\bdot\bdot}(v)) (\exp(x f_{(\wdot)})-1).
\end{align*}
Using the expression for $\serB^{(1)}$ given in Lemma \ref{lemma:ab00-on-torus},
\begin{align*}
C''\serB^{(1)} &= (w g_{\bdot} f_{(\bdot\bdot\wdot)} + wg_{\bdot\bdot\wdot} f_{(\bdot)} + g_{\bdot\wdot} f_{(\bdot\wdot)}) \phi_1(y)x\exp(xf_{(\wdot)}) \\
&\quad+ (wg_{\bdot} f_{(\bdot\bdot)} + w g_{\bdot\bdot} f_{(\bdot)}) f_{(\bdot\wdot)} \phi_2(y)x\exp(xf_{(\wdot)}) \\
&\quad+ wg_{\bdot} f_{(\bdot)} \left( \sum_{n\geq 3} \frac{x^n}{n!}  \psi_{(3,1^{n-3})} f_{(\wdot\wdot\wdot)}f_{(\wdot)}^{n-3} + \sum_{n\geq 4} \frac{x^n}{n!} \psi_{(2^2,1^{n-4})} f_{(\wdot\wdot)}^2 f_{(\wdot)}^{n-4}\right).
\end{align*}
Expressing the solution to Equation (\ref{eqn:psi-doubleprime-no-ic}) in terms of the functions $\tau_{R,S}$ and adding the initial condition gives the result.
\end{proof}

When information about face structure is forgotten, the following series is obtained.

\begin{cor}
\begin{align*}
\langle \serC^{(1)}\rangle &= xe^x(\langle \phi_{1,0 }\rangle + \langle \phi_{2,0 }\rangle + 2w \langle \phi_{1,1 }\rangle + 2w\langle \phi_{2,1 }\rangle + w^2\langle \phi_{1,2}\rangle + w^2\langle \phi_{2,2 }\rangle )\\
&\quad+ (e^x-1) ( w^2 \langle \phi_{0,2 }\rangle + 3w^3\langle \phi_{0,3}\rangle + 3w^4 \langle \phi_{0,4}\rangle + w^5 \langle \phi_{0,5}\rangle) \\
&\quad+ xe^x \langle \phi_1(y)\rangle (\langle \phi_{1,0}\rangle + 2w \langle \phi_{0,1 }\rangle + 2w\langle \phi_{1,1}\rangle + w^2\langle \phi_{0,2}\rangle + w^2 \langle \phi_{1,2}\rangle) \\
&\quad+ xe^x \langle \phi_2(y)\rangle (2w\langle \phi_{0,1}\rangle + w^2 \langle \phi_{0,2}\rangle) \\
&\quad+ w\langle \phi_{0,1}\rangle \left( \sum_{n\geq 3} \frac{x^n}{n!}  \psi_{(3,1^{n-3})}  + \sum_{n\geq 4}  \frac{x^n}{n!} \psi_{(2^2,1^{n-4})} \right) + \langle \serB^{(1)}\rangle.
\end{align*}
\label{cor:abcd-torus-specialized}
\end{cor}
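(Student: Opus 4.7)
The corollary is a direct specialization of Theorem~\ref{thm:abcd-torus}: we apply the operator $\langle\,\cdot\,\rangle$, which sets every $g_R=1$ and every $f_{(S)}=1$, to both sides of the expression for $\serC^{(1)}$ given there. The plan is to evaluate each factor on the right-hand side of that theorem under $\langle\,\cdot\,\rangle$ and then collect terms.

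The main step is to compute $\langle \tau_{R,S}\rangle$ for each of the eight preimage series appearing in Theorem~\ref{thm:abcd-torus}, namely $\tau_{R,S}$ for $R\in\{\epsilon,\bdot,\wdot,\bdot\bdot,\wdot\wdot,\bdot\wdot\}$ and the appropriate $S$. For this I would use the closed form in Theorem~\ref{thm:sigma-nonrecursive}. Observe that $\Omega(\strc)$ and $g_{\bdot \strc_k}$ (and $g_{\bdot}f_{(\bdot\strc_k)}$) are products purely of $f$- and $g$-type indeterminates and so collapse to $1$ under $\langle\,\cdot\,\rangle$; what survives from each summand is only the $w$-weight $w^{\iota(\strc)}$ (or $w^{\iota(\strc)+1}$) and the factor $\langle\phi_T\rangle$, where $T$ is determined by the string composition $\strc$ and by $S$. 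Since $\langle\phi_S\rangle$ depends only on the multiset of symbols of $S$, it equals $\langle\phi_{i,j}\rangle$ with $i$ the number of white dots and $j$ the number of black dots in $T$; all of the compositions with the same leading symbols therefore collapse to a single term in $\langle\phi_{i,j}\rangle$. Because the strings $R$ here have length at most $2$, the relevant compositions can be enumerated by hand: for $R=\epsilon$ there is one trivial contribution from Theorem~\ref{thm:sigma-nonrecursive}'s boundary case $\tau_{\epsilon,S}=g_{\bdot}\phi_{\bdot S}$, for $R$ of length $1$ there is one composition, and for $R$ of length $2$ there are two compositions (namely $(R)$ and $(R_1,R_2)$), distinguished by whether $\iota$ picks up a $w$ from $R_2=\bdot$ or not.

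Having tabulated each $\langle \tau_{R,S}\rangle$ as a polynomial in $w$ whose coefficients are $\langle\phi_{i,j}\rangle$'s, I would substitute into the expression of Theorem~\ref{thm:abcd-torus}, using $\langle f_{(\bdot)}\rangle=\langle f_{(\bdot\bdot)}\rangle=\langle f_{(\bdot\wdot)}\rangle=\langle f_{(\bdot\bdot\wdot)}\rangle=1$ and $\langle\exp(xf_{(\wdot)})\rangle=e^x$. Collecting terms then yields line by line the statement of the corollary: the first line comes from combining $\langle w f_{(\bdot)}\tau_{\wdot\wdot,\bdot}+\tau_{\wdot\wdot,\epsilon}\rangle$, the second from $\langle w^2\tau_{\bdot\bdot,\bdot}+w^3f_{(\bdot)}\tau_{\bdot\bdot,\bdot\bdot}\rangle$, the third and fourth from the two $\phi_i(y)x\exp(xf_{(\wdot)})$ terms, and the fifth from $w\langle\tau_{\epsilon,\epsilon}\rangle$ applied to the $\serB^{(1)}|_{y=0}$ part of Lemma~\ref{lemma:ab00-on-torus}. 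The trailing $\langle\serB^{(1)}\rangle$ is simply preserved from the last summand of Theorem~\ref{thm:abcd-torus}.

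The hard part is not conceptual: it is the bookkeeping of the $w$-weights and of the induced strings $\strc_{k,1}\cdots\strc_{1,1}S$. The one small subtlety to watch is that the $\iota$-count may differ between the two compositions of a length-2 string, so for instance the contributions of the mixed case $\tau_{\bdot\wdot,\epsilon}$ and $\tau_{\wdot\bdot,\epsilon}$ are not interchangeable and must each be computed individually before being added to their respective coefficients in Theorem~\ref{thm:abcd-torus}. Once this tabulation is carried out correctly, the identification with the stated right-hand side is mechanical.
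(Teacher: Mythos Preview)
Your approach is correct and is precisely what the paper intends: the corollary is stated without proof because it is obtained by applying $\langle\,\cdot\,\rangle$ to Theorem~\ref{thm:abcd-torus} and evaluating each $\langle\tau_{R,S}\rangle$ via Theorem~\ref{thm:sigma-nonrecursive} (the paper even supplies the relevant small cases in Table~\ref{table:tau-contributions}). Your description of the bookkeeping, including the role of $\iota$ and the fact that only the $w$-weight and the $\langle\phi_{i,j}\rangle$ survive from each string composition, is accurate.
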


\subsubsection{Extracting Coefficients from the $(a,b,0,0)$-dipole series}

In order to obtain the generating series with respect to the weights $a, b, c, d, n,$ and $g$ (but not with respect to face-degree sequence), we set all $f$- and $g$-type indeterminates to 1. Recall that, given a formal power series $F$, we use $\langle F \rangle$ to denote the series so obtained.

In order to extract coefficients from the series appearing in Lemma \ref{lemma:ab00-on-torus}, the first step is to determine the coefficients of $\langle \tau_{S,k}\rangle$. The functions $\langle \tau_{S,k} \rangle$ are expressed in terms of $\langle \phi_i \rangle$, whose coefficients are, from definition, given by the following.
\begin{lemma}
 \[
 \left[ \frac{y^n}{n!}\right] \langle \phi_i \rangle = [t^{n-i}] (1-t )^{-i} = \begin{cases}
 \binom{n-1}{n-i}& \text{ if } n\geq i, \\
 0 & \text{ if } n<i.
 \end{cases}
 \]
 \label{lemma:phi-coefficient-1}
\end{lemma}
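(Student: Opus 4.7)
The plan is a direct unpacking of definitions, using the standard generating function for complete symmetric functions. First I would invoke the definition of $\phi_i(y)$ to write
\[
\phi_i(y) = \sum_{n \geq i} h_{n-i}(x_1,\ldots,x_i)\Big|_{x_1=\cdots=x_i=f_{(\bdot)}} \frac{y^n}{n!}.
\]
Applying the bracket $\langle \cdot \rangle$ sets $f_{(\bdot)} = 1$, so the coefficient of $y^n/n!$ in $\langle \phi_i \rangle$ is $h_{n-i}(1,1,\ldots,1)$ (with $i$ arguments), provided $n \geq i$, and is zero otherwise.

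The key identity I would use is the well-known generating function
\[
\sum_{k \geq 0} h_k(x_1,\ldots,x_i)\, t^k = \prod_{j=1}^{i} \frac{1}{1 - tx_j},
\]
which at $x_1 = \cdots = x_i = 1$ specializes to $(1-t)^{-i}$. Extracting the coefficient of $t^{n-i}$ gives $h_{n-i}(1,\ldots,1) = [t^{n-i}](1-t)^{-i}$, establishing the middle equality. The final equality then follows from the negative binomial expansion $(1-t)^{-i} = \sum_{k \geq 0} \binom{k+i-1}{k} t^k$, with $k = n-i$ yielding $\binom{n-1}{n-i}$ when $n \geq i$.

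There is no real obstacle here: the statement is essentially a restatement of the definition of $\phi_i$ combined with the standard evaluation $h_k(1^i) = \binom{k+i-1}{k}$. The only point requiring a moment's care is the boundary case $n < i$, where the defining sum for $\phi$ has no contribution (since $h_j$ is taken to be $0$ for $j < 0$, per the convention stated immediately after Definition~\ref{defn:general-phi}), matching the vanishing of $[t^{n-i}](1-t)^{-i}$ for negative exponents.
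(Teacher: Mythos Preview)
Your proposal is correct and matches the paper's approach: the paper states that the lemma follows ``from definition'' and gives no further proof, and your argument is exactly the natural unpacking of that definition via the generating function for $h_k$ and the negative binomial expansion.
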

When setting the $f$- and $g$-type indeterminates to 1, $\tau_{R,k}$ has the following more explicit form.
\begin{cor}
\label{cor:tau-coefficients}
Let $R \in \{\bdot,\wdot\}^*\setminus \epsilon$, and $k\geq 0$. Then
\[
\langle \tau_{R,k} \rangle = \sum_{i\geq 1} \binom{\ell(R)-1}{i-1} \langle \phi_{k+i}(x) \rangle 
\text{ and }
\left[ \frac{y^n}{n!}\right] \langle \tau_{R,k}\rangle = \binom{\ell(R)+b-2}{\ell(R)+k-1}.
\]
\end{cor}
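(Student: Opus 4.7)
The plan is to start from the closed form for $\tau_{R,k}$ provided by Theorem \ref{thm:tau-nonrecursive},
\[
\tau_{R,k} = \sum_{i\geq 1} \phi_{k+i} \sum_{(\strc_1,\ldots,\strc_i) \in\mathcal{C}_i(R)} g_{\bdot \strc_1} \prod_{2\leq j\leq i} f_{(\bdot \strc_j)},
\]
and apply $\langle\,\cdot\,\rangle$ term by term. Since the operator $\langle\,\cdot\,\rangle$ sets every $g$- and $f$-type indeterminate equal to $1$, each monomial $g_{\bdot\strc_1}\prod_{j\geq 2} f_{(\bdot\strc_j)}$ contributes a factor of $1$, and the inner sum collapses to the cardinality $|\mathcal{C}_i(R)|$.

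The set $\mathcal{C}_i(R)$ of string compositions of $R$ into $i$ non-empty parts is in an obvious bijection with the choices of $i-1$ internal division points among the $\ell(R)-1$ gaps between consecutive symbols of $R$. Hence $|\mathcal{C}_i(R)| = \binom{\ell(R)-1}{i-1}$, which yields the first claimed identity
\[
\langle \tau_{R,k}\rangle = \sum_{i\geq 1}\binom{\ell(R)-1}{i-1}\langle\phi_{k+i}\rangle.
\]

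For the coefficient extraction, I would combine this with Lemma~\ref{lemma:phi-coefficient-1}, which gives $[y^n/n!]\langle\phi_{k+i}\rangle = \binom{n-1}{n-k-i}$, to obtain
\[
\left[\frac{y^n}{n!}\right]\langle \tau_{R,k}\rangle
= \sum_{i\geq 1}\binom{\ell(R)-1}{i-1}\binom{n-1}{n-k-i}.
\]
Re-indexing by $j = i-1$ and invoking the Vandermonde convolution
\[
\sum_{j\geq 0}\binom{\ell(R)-1}{j}\binom{n-1}{(n-k-1)-j} = \binom{\ell(R)+n-2}{n-k-1},
\]
and then rewriting $\binom{\ell(R)+n-2}{n-k-1} = \binom{\ell(R)+n-2}{\ell(R)+k-1}$ by the symmetry of binomial coefficients, produces the stated formula (reading the $b$ in the statement as $n$, the variable in which the coefficient is extracted).

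No step here is a serious obstacle: the entire argument is routine bookkeeping once one recognises the combinatorial meaning of $\langle\tau_{R,k}\rangle$ as counting string compositions weighted by a $\phi$-coefficient. The only place where care is required is aligning the indexing conventions --- in particular confirming that $|\mathcal{C}_i(R)| = \binom{\ell(R)-1}{i-1}$ rather than off by one, and tracking the shift $i\mapsto j+1$ so that Vandermonde applies cleanly.
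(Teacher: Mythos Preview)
Your proposal is correct and follows essentially the same approach as the paper: invoke Theorem~\ref{thm:tau-nonrecursive}, observe that $|\mathcal{C}_i(R)|=\binom{\ell(R)-1}{i-1}$, extract coefficients via Lemma~\ref{lemma:phi-coefficient-1}, and finish with Vandermonde. Your remark that the $b$ in the displayed formula should be read as $n$ is also consistent with the paper's own proof, which writes $n$ throughout.
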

\begin{proof}
The first expression follows from Theorem \ref{thm:tau-nonrecursive}, along with the observation that the number of string compositions of $R$ into $i$ parts is equal to the number of integer compositions of $\ell(R)$ into $i$ parts, which is $\binom{\ell(R)-1}{i-1}$. Extracting the coefficient of $\frac{y^n}{n!}$, 
\[
\left[ \frac{y^n}{n!}\right] \langle \tau_{R,k}\rangle = \sum_{i\geq 1} \binom{\ell(R)-1}{i-1} \binom{n-1}{n-k-i} 
= \binom{\ell(R)+n-2}{\ell(R)+k-1},
\]
by Vandermonde's identity. 
\end{proof}
This result may be used with Lemma \ref{lemma:ab00-on-torus}, to determine the number of $(a,b,0,0)$-dipoles on the torus when $b\geq 1$:
\begin{equation}
\label{eqn:ab00-torus-coeff}
\left[\frac{y^b}{b!} \frac{x^{a+1}}{(a+1)!}\right] \langle \serB^{(1)} \rangle = \langle \tau_{\bdot\wdot,0}\rangle x\exp(x) = b(a+1).
\end{equation}

\subsubsection{Extracting Coefficients from the $(a,b,c,d)$-dipole series}

The number of $(a,b,c,d)$-dipoles on the torus may be obtained by extracting coefficients from the expression for $\langle\serC^{(1)}\rangle$ given in Corollary \ref{cor:abcd-torus-specialized}. Suppose $c+d>0$. Then the term $\langle \serB^{(1)}\rangle$ in $\langle\serC^{(1)}\rangle$ may be disregarded. 
Extracting the coefficients of $x^{a+1}$ and $y^b$ may be done as in the $(a,b,0,0)$ case:
\begin{align*}
\left[ \frac{x^{a+1}y^b}{(a+1)!b!}\right] \langle \serC^{(1)} \rangle &= (a+1)\delta_{b,0}(\langle \phi_{1,0 }\rangle + \langle \phi_{2,0 }\rangle + 2w \langle \phi_{1,1 }\rangle + 2w\langle \phi_{2,1 }\rangle + w^2\langle \phi_{1,2}\rangle + w^2\langle \phi_{2,2 }\rangle )\\
&\quad+ \delta_{b,0}( w^2 \langle \phi_{0,2 }\rangle +  3w^3\langle \phi_{0,3}\rangle + 3w^4 \langle \phi_{0,4}\rangle + w^5 \langle \phi_{0,5}\rangle) \\
&\quad+ (a+1)(1-\delta_{b,0}) (\langle \phi_{1,0}\rangle + 2w \langle \phi_{0,1 }\rangle + 2w\langle \phi_{1,1}\rangle + w^2\langle \phi_{0,2}\rangle + w^2 \langle \phi_{1,2}\rangle) \\
&\quad+ (a+1)(1-\delta_{b,0})(b-1)  (2w\langle \phi_{0,1}\rangle + w^2 \langle \phi_{0,2}\rangle) \\
&\quad+ w\langle \phi_{0,1}\delta_{b,0} \rangle ( \psi_{(3,1^{a-2})}  + \psi_{(2^2,1^{a-3})} ).
\end{align*}
To continue, it is necessary to extract coefficients of the form $\left[ w^d \frac{v^{c+d}}{(c+d)!}\right] w^k \langle \phi_{i,j}\rangle.$
From the definition, $\left[\frac{v^{c+d}}{(c+d)!}\right]  \langle \phi_{i,j}\rangle$ is the complete symmetric function $h_{c+d-i-j}$ in $i+j$ indeterminates, with $i$ indeterminates set to 1 and $j$ indeterminates set to $w$. Thus, we have the following.
\begin{lemma}
Let $i,j>0$. Then
\begin{align*}
\left[ w^d \frac{v^{c+d}}{(c+d)!}\right] w^k \langle \phi_{i,j}\rangle &= [w^{d-k}] [t^{c+d-i-j}] (1-t)^{-i}(1-wt)^{-j} \\
&= \begin{cases}
 \binom{c+k-j-1}{c+k-j-i} \binom{d-k+j-1}{d-k} & \text{ if } d\geq k \text{ and } c\geq i+j-k,\\
 0 & \text{ otherwise.}
 \end{cases}
\end{align*}
When $i=0$,
\[
\left[ w^d \frac{v^{c+d}}{(c+d)!}\right] w^k \langle \phi_{0,j}\rangle = \begin{cases} \binom{d-k+j-1}{d-k} & \text{ if } c = j-k, \\
0 &\text{ otherwise,}
\end{cases}
\]
and for $j=0$,
\[
\left[ w^d \frac{v^{c+d}}{(c+d)!}\right] w^k \langle \phi_{i,0}\rangle = \begin{cases} 
\binom{c+d-1}{c+d-i} & \text{ if } d=k, \\
0 & \text{ otherwise.}
\end{cases}
\]
\label{lem:coefficient-extraction}
\end{lemma}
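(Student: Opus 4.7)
The plan is to translate $\langle \phi_{i,j}\rangle$ into a bivariate generating function via the standard formula for complete symmetric functions, and then perform coefficient extraction in stages.

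By the definition of $\phi_S$ in Section \ref{sec:phi-functions} and the specialization $\langle \cdot \rangle$, which sends $f_{(\wdot)}, f_{(\bdot)} \mapsto 1$, we have for $i,j > 0$:
\[
\langle \phi_{i,j}\rangle = \sum_{n \geq i+j} h_{n-i-j}\bigl(\underbrace{1,\ldots,1}_{i},\underbrace{w,\ldots,w}_{j}\bigr) \frac{v^n}{n!}.
\]
The classical identity $\sum_{m\geq 0} h_m(x_1,\ldots,x_{i+j}) t^m = \prod_{\ell}(1-tx_\ell)^{-1}$, evaluated at the above specialization, immediately yields
\[
\left[\frac{v^{c+d}}{(c+d)!}\right]\langle \phi_{i,j}\rangle = [t^{c+d-i-j}](1-t)^{-i}(1-wt)^{-j},
\]
which gives the first stated equality after multiplying by $w^k$ and extracting $[w^{d-k}]$.

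For the closed form, the plan is to extract coefficients in two stages. First, $[w^{d-k}](1-wt)^{-j} = \binom{d-k+j-1}{j-1} t^{d-k}$, which is nonzero exactly when $d \geq k$. Substituting this leaves $t^{d-k}(1-t)^{-i}$, from which extracting $[t^{c+d-i-j}]$ via the negative binomial expansion gives $\binom{c+k-j-1}{i-1}$ provided $c+d-i-j \geq d-k$, i.e.\ $c \geq i+j-k$. Rewriting $\binom{c+k-j-1}{i-1} = \binom{c+k-j-1}{c+k-j-i}$ produces the advertised product.

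The edge cases $i=0$ and $j=0$ reduce to the same computation with one factor degenerating to $1$: when $i=0$, the constraint $[t^{c+d-i-j}]t^{d-k} \neq 0$ forces $c = j-k$; when $j=0$, the absence of $w$ in $(1-t)^{-i}$ forces $d = k$. In each such case a single binomial coefficient survives, and direct substitution matches the stated formulas. The whole argument is mechanical; the only real care required is tracking the ranges of validity so that the zero cases are identified correctly, and I do not anticipate any substantive obstacle.
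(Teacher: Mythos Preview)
Your proof is correct and follows essentially the same approach as the paper: both recognize that $\left[\frac{v^{c+d}}{(c+d)!}\right]\langle \phi_{i,j}\rangle$ is the complete symmetric function $h_{c+d-i-j}$ in $i+j$ variables specialized to $i$ copies of $1$ and $j$ copies of $w$, and then read off coefficients via the generating function $(1-t)^{-i}(1-wt)^{-j}$. The paper's proof is terse (little more than that observation), whereas you have carefully spelled out the two-stage extraction and the degenerate cases $i=0$, $j=0$; there is no substantive difference in method.
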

This leads to the following.
\begin{thm}
Suppose $c+d>0$. When $b\geq 1$, the number of $(a,b,c,d)$-dipoles on the torus is given by
\[
 \begin{cases}
(a+1)b(d+1)+ \psi_{3,1^{a-1}} + \psi_{2^2,1^{a-4}}  & \text{ if } c=0, \\
(a+1)(d+1) & \text{ if } c\geq 1.
\end{cases}
\]
When $b=0$, the number of $(a,b,c,d)$-dipoles on the torus is given by
\[
\begin{cases}
\binom{d+2}{d-2} + \psi_{3,1^{a-1}} + \psi_{2^2,1^{a-4}} & \text{ if } c=0, \\
(a+1)(d+1) & \text{ if } c=1, \\
(a+1)c(d+1) &\text { if } c\geq 2.
\end{cases}
\]
When $c=d=0$, then the number of $(a,b,c,d)$-dipoles on the torus is given by Equation (\ref{eqn:ab00-torus-coeff}).
\end{thm}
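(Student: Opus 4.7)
The plan is to take the expression for $\left[\frac{x^{a+1}y^b}{(a+1)!b!}\right] \langle \serC^{(1)}\rangle$ displayed immediately before the theorem and apply Lemma~\ref{lem:coefficient-extraction} term-by-term to extract the coefficient of $w^d\frac{v^{c+d}}{(c+d)!}$. The case $c=d=0$ is trivial: since the series $\serC|_{v=0}=\serB$, the coefficient reduces to that of $\serB^{(1)}$, which is Equation~(\ref{eqn:ab00-torus-coeff}). Thus the genuine work lies in analyzing $c+d>0$.

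For $c+d>0$ the term $\langle \serB^{(1)}\rangle$ contributes nothing since it is independent of $v$, and we can discard it. The remaining six lines of the coefficient formula fall naturally into two blocks: those multiplied by $\delta_{b,0}$ (to be used when $b=0$) and those multiplied by $1-\delta_{b,0}$ (to be used when $b\geq 1$). For each block, I would apply Lemma~\ref{lem:coefficient-extraction} to each summand $w^k\langle\phi_{i,j}\rangle$. The lemma's cases split sharply according to whether $i=0$, $j=0$, or both are positive, producing for each $(i,j,k)$ either $0$, a single binomial in one variable (when $i$ or $j$ is zero, forcing a relation between $c$ and $k$), or the product $\binom{c+k-j-1}{c+k-j-i}\binom{d-k+j-1}{d-k}$.

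For $b\geq 1$, the surviving contributions come from the two lines proportional to $(a+1)$ and $(1-\delta_{b,0})$. A short calculation shows that all terms containing $\langle\phi_{1,0}\rangle$ or $\langle\phi_{2,0}\rangle$ force $d=0$, so when $c=0$ they combine with the initial-condition term $w\langle\phi_{0,1}\rangle(\psi_{(3,1^{a-2})}+\psi_{(2^2,1^{a-3})})$ (from the last displayed line before the theorem) to yield $(a+1)b(d+1)+\psi_{(3,1^{a-1})}+\psi_{(2^2,1^{a-4})}$, while for $c\geq 1$ the binomial identities collapse to $(a+1)(d+1)$ after summing the $\phi_{1,1},\phi_{1,2},\phi_{0,1},\phi_{0,2}$ contributions with their $w$-weights. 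For $b=0$, only the first two lines and the $\psi$-line contribute; the second line $w^2\langle\phi_{0,2}\rangle+3w^3\langle\phi_{0,3}\rangle+3w^4\langle\phi_{0,4}\rangle+w^5\langle\phi_{0,5}\rangle$ contributes only when $c=0$, and by Lemma~\ref{lem:coefficient-extraction} evaluates to $\binom{d+1}{d-1}+3\binom{d+1}{d-2}+3\binom{d+1}{d-3}+\binom{d+1}{d-4}=\binom{d+2}{d-2}$ after a Vandermonde simplification.

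The main obstacle will be the careful bookkeeping needed to verify that the many $\phi_{i,j}$ terms combine into the advertised closed forms across the separate sub-cases $c=0$, $c=1$, $c\geq 2$ — in particular for $b=0, c\geq 2$, where nonzero contributions come only from $(a+1)\langle\phi_{1,1}\rangle$, $(a+1)w^2\langle\phi_{1,2}\rangle$ and $(a+1)\langle\phi_{2,0}\rangle$, $(a+1)\langle\phi_{2,1}\rangle$, $(a+1)w^2\langle\phi_{2,2}\rangle$, which must cancel and reassemble to $(a+1)c(d+1)$. Once the binomial identities are verified in each case, the theorem follows by collecting the pieces.
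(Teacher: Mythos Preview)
Your proposal is correct and follows essentially the same approach as the paper: the paper's own proof is simply ``use Lemma~\ref{lem:coefficient-extraction} to extract coefficients from the expression for $\serC^{(1)}$ given by Theorem~\ref{thm:abcd-torus}, and perform routine simplification,'' and what you have written is a more detailed sketch of exactly that calculation. One small point of bookkeeping to watch: in the displayed formula for $\left[\frac{x^{a+1}y^b}{(a+1)!\,b!}\right]\langle\serC^{(1)}\rangle$ the $\psi$-line carries a factor $\delta_{b,0}$, so be careful about how the $\psi$ terms enter the $b\geq 1$ case --- your narrative attributes them to that line, which as written would vanish for $b\geq 1$.
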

\begin{proof}
The number of $(a,b,c,d)$-dipoles on the torus is given by
\[
\left[ \frac{x^{a+1}}{(a+1)!}\frac{y^b}{b!}\frac{v^{c+d}}{(c+d)!} w^d\right] \langle \serC^{(1)} \rangle.
\]
The results are obtained by using Lemma \ref{lem:coefficient-extraction} to extract coefficients from the expression for $\serC^{(1)}$ given by Theorem \ref{thm:abcd-torus}, and performing routine simplification.
\end{proof}

\subsection{Solutions for the Double Torus}
\label{subsec:double-torus-solution}

In order to determine the genus 2 solution for $\serB$, the first step is to determine $J'\serB^{(1)}$. 

Using this lemma, the following computation may be done:
\begin{align*}
J'\serB^{(1)} &= x\exp(xf_{(\wdot)}) ( g_{\bdot\bdot \wdot\bdot\bdot}\phi_2(x) + xg_{\bdot\bdot\wdot\bdot\wdot} \phi_1(x) + 2g_{\bdot\bdot\bdot\bdot}f_{(\bdot\wdot)}\phi_3(x) + g_{\bdot\bdot\bdot\bdot\wdot}\phi_2(x)  \\
&\qquad\qquad\qquad+ g_{\bdot\bdot\bdot\wdot\bdot} \phi_2(x) + xg_{\bdot\bdot\bdot\wdot} f_{(\bdot\wdot)} \phi_2(x)) \\
&+ \sum_{n\geq 3} \frac{x^n}{n!}  \psi_{(3,1^{n-3})} (3g_{\bdot\bdot\wdot\wdot\wdot}f_{(\wdot)}^{n-3} + (n-3)g_{\bdot\bdot \wdot} f_{(\wdot\wdot\wdot)} f_{(\wdot)}^{n-4}) \\
&+ \sum_{n\geq 4} \frac{x^n}{n!} \psi_{(2^2,1^{n-4})} (4g_{\bdot\bdot\wdot\wdot} f_{(\wdot\wdot)}f_{(\wdot)}^{n-4} + (n-4) g_{\bdot\bdot\wdot} f_{(\wdot\wdot)}^2 f_{(\wdot)}^{n-5}).
\end{align*}
Proceeding as in the genus 1 case leads to the following.
\begin{lemma}
The generating series for $(a,b,0,0)$-dipoles on the double torus is
\begin{align*}
\serB^{(2)} &= x\exp(xf_{(\wdot)}) ( \tau_{\bdot\wdot\bdot\bdot,2} + x\tau_{\bdot\wdot\bdot\bdot,1} + 2 f_{(\bdot\wdot)} \tau_{\bdot\bdot\bdot,3} + \tau_{\bdot\bdot\bdot\wdot,2} + \tau_{\bdot\bdot\wdot\bdot,2} + xf_{(\bdot\wdot)} \tau_{\bdot\bdot\wdot,2}) \\
&+ 3\tau_{\bdot\wdot\wdot\wdot,0} \sum_{n\geq 3} \frac{x^n}{n!} \psi_{(3,1^{n-3})} f_{(\wdot)}^{n-3} + \tau_{\bdot\wdot,0}f_{(\wdot\wdot\wdot)} \sum_{n\geq 4} \frac{(n-3)x^n}{n!}\psi_{(3,1^{n-3})} f_{(\wdot)}^{n-4} \\
&+ 4\tau_{\bdot\wdot\wdot,0} f_{(\wdot\wdot)} \sum_{n\geq 4} \frac{x^n}{n!} \psi_{(2^2,1^{n-4})} f_{(\wdot)}^{n-4} + \tau_{\bdot\wdot,0} f_{(\wdot\wdot)}^2 \sum_{n\geq 5} \frac{(n-4)x^n}{n!}\psi_{(2^2,1^{n-4})} f_{(\wdot)}^{n-5} \\
&+ [u^4] g_{\bdot} \sum_{D \in \mathcal{D}} \frac{x^{n(D)}}{n(D)!} u^{2g(D)} f_{\lambda'(D)}.
\end{align*}
\label{lemma:ab00-double-torus}
\end{lemma}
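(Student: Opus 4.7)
The plan is to apply exactly the same machinery used to prove Lemma~\ref{lemma:ab00-on-torus}, one genus higher. Specializing the recursion~(\ref{eqn:ab00-diff-recursion-2}) to $g=2$ yields
\[
\left(\tfrac{\partial}{\partial y} - C'\right)\hat{\serB}^{(2)} \;=\; J'\serB^{(1)},\qquad \hat{\serB}^{(2)}|_{y=0}=0,
\]
where $\hat{\serB}^{(2)}=\serB^{(2)}-\serB^{(2)}|_{y=0}$. Since $\serB^{(1)}$ is already known from Lemma~\ref{lemma:ab00-on-torus}, the task reduces to (a) computing $J'\serB^{(1)}$ explicitly, and (b) inverting $\partial/\partial y-C'$ on the result and adding back the initial condition.

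For step (a), I would apply $J'$ termwise to the three pieces of $\serB^{(1)}$: the $\tau_{\bdot\wdot,0}\,x\exp(xf_{(\wdot)})$-term, the $\psi_{(3,1^{n-3})}$-sum, and the $\psi_{(2^2,1^{n-4})}$-sum. Since $J'$ differentiates only in the $g$- and $f$-type indeterminates (and leaves $x$, $y$, $u$ untouched), its action on the $\phi_i(y)$-factors inside $\tau_{\bdot\wdot,0}$ is governed by Lemma~\ref{lem:phi-derivative}, while its action on the remaining $g$- and $f$-type monomials is a direct combinatorial expansion using the defining formula for $J'$ from Lemma~\ref{lemma:ab00-PDE}. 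After gathering terms one arrives at the expression for $J'\serB^{(1)}$ displayed in the excerpt: a linear combination of products of the form $g_{\bdot R}\,\phi_k(y)\cdot M$, where $M$ is a polynomial in $x$ and in the $f$-type indeterminates but is independent of $y$.

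For step (b), every such product lies in the range of $\partial/\partial y-C'$ in the simplest possible way: because $C'$ acts only on $g$-type indeterminates and $M$ is independent of $y$, the preimage of $g_{\bdot R}\phi_k(y)\cdot M$ satisfying a zero initial condition at $y=0$ is just $\tau_{R,k}\cdot M$, by the defining equation~(\ref{eqn:tauRk-definition}) for $\tau_{R,k}$. Applying this termwise and invoking linearity gives $\hat{\serB}^{(2)}$ as the linear combination of $\tau_{R,k}$-functions shown in the first four lines of the claimed formula. Theorem~\ref{thm:tau-nonrecursive} provides an explicit combinatorial expansion of each $\tau_{R,k}$ should one wish to expand further. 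Finally, the initial condition $\serB^{(2)}|_{y=0}$ is supplied by the initial condition $\serB|_{y=0}=\serA$ from Lemma~\ref{lemma:ab00-PDE}, yielding the final $[u^4]$-coefficient term in the statement.

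I expect the main obstacle to be bookkeeping rather than anything conceptual. The join operator $J'$ produces an intricate collection of $g_{\bdot R}$-terms of lengths up to five from the $\tau_{\bdot\wdot,0}$-part of $\serB^{(1)}$, and one must carefully track the ``doubled'' first symbol $R_1$ inserted by $J'$ together with the cyclic-shift sums over $S\in(S)$ that appear when a non-root face is merged into the root face. Pairing up the $f_{(\wdot)}$-derivative contributions with the correct $\phi$-coefficient corrections from Lemma~\ref{lem:phi-derivative} across the three pieces of $\serB^{(1)}$ is where computational errors are most likely to occur, but the procedure itself is algorithmic and requires no new techniques beyond those developed in Section~\ref{sec:genus-recursion}.
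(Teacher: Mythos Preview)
Your proposal is correct and follows essentially the same approach as the paper: compute $J'\serB^{(1)}$ explicitly from the genus~$1$ expression, replace each resulting $g_{\bdot R}\phi_k(y)$ by $\tau_{R,k}$ using the defining equation~(\ref{eqn:tauRk-definition}), and then add back the initial condition $\serB^{(2)}|_{y=0}=[u^4]\serA$. The paper's own argument is exactly this, stated as ``Proceeding as in the genus~1 case.''
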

The coefficients for $(a,b,0,0)$ dipoles on the double torus may now be obtained as follows.
\begin{cor}
\begin{align*}
\left[\frac{y^b}{b!} \frac{x^{a+1}}{(a+1)!}\right] \langle \serB^{(2)} \rangle &= 3(a+1)\binom{b+2}{5} + a(a+1)\binom{b+2}{4} + 2(a+1)\binom{b+1}{5} \\
&\qquad + a(a+1)\binom{b+1}{4}  + \left(3 \binom{b+2}{3}  + (a-2)b\right) \psi_{(3,1^{a-2})} \\
&\qquad + \left(4 \binom{b+1}{2}  + (a-3) b  \right)\psi_{(2^2,1^{a-3})},
\end{align*}
adopting the convention that $\psi_{(3,1^{a-2})}=0$ when $a<2$, and $\psi_{(2^2,1^{a-3})}=0$ when $a<3$.
\end{cor}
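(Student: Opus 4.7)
The plan is a direct coefficient extraction from the closed form of $\serB^{(2)}$ supplied by Lemma~\ref{lemma:ab00-double-torus}. First I would apply $\langle\cdot\rangle$ to that expression, which sends every $f$- and $g$-indeterminate to $1$ and thus collapses each factor $x\exp(xf_{(\wdot)})$ to $xe^x$ and each auxiliary $f_{(\bdot\wdot)}$, $f_{(\wdot\wdot)}$, $f_{(\wdot\wdot\wdot)}$ (and powers of $f_{(\wdot)}$) to $1$. After this the series is a linear combination, with coefficients in $\mathbb{Q}[\psi_{(3,1^{\cdot})},\psi_{(2^2,1^{\cdot})}]$, of the four basic products $xe^x\langle\tau_{R,k}\rangle$, $x^2e^x\langle\tau_{R,k}\rangle$, $\langle\tau_{R,0}\rangle\sum_n \tfrac{x^n}{n!}\psi_{(3,1^{n-3})}$ (with or without an $(n-3)$ weight), and the analogous $\psi_{(2^2,1^{n-4})}$ sums, plus the $y$-free initial term $[u^4]g_{\bdot}\sum_D \tfrac{x^{n(D)}}{n(D)!}u^{2g(D)}f_{\lambda'(D)}$.

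Next I would extract the two coefficients separately. For the $x$-variable, the elementary identities
\[
\left[\tfrac{x^{a+1}}{(a+1)!}\right]xe^x=a+1,\qquad \left[\tfrac{x^{a+1}}{(a+1)!}\right]x^2e^x=(a+1)a,
\]
together with $\left[\tfrac{x^{a+1}}{(a+1)!}\right]\sum_{n\geq 3}\tfrac{x^n}{n!}\psi_{(3,1^{n-3})}=\psi_{(3,1^{a-2})}$ and its weighted and $(2^2,1^\cdot)$-analogues, take care of all the $x$-dependence. For the $y$-variable, the key input is Corollary~\ref{cor:tau-coefficients}, which supplies
\[
\left[\tfrac{y^b}{b!}\right]\langle\tau_{R,k}\rangle=\binom{\ell(R)+b-2}{\ell(R)+k-1}
\]
and in particular $\binom{b+2}{5},\binom{b+2}{4},\binom{b+1}{5},\binom{b+1}{4},\binom{b+2}{3},\binom{b+1}{2},b$ for the six $(\ell,k)$ pairs $(4,2),(4,1),(3,3),(3,2),(4,0),(3,0),(2,0)$ that actually occur in Lemma~\ref{lemma:ab00-double-torus}. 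The final term $[u^4]g_{\bdot}\sum_D\tfrac{x^{n(D)}}{n(D)!}u^{2g(D)}f_{\lambda'(D)}$ is independent of $y$, so under the standing assumption that $b\geq 1$ (tacit in the stated formula, since the formula manifestly vanishes at $b=0$ and that case is covered by the $\psi$-initial condition) it contributes nothing.

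Finally I would assemble the six $xe^x$-contributions term-by-term: $\tau_{\bdot\wdot\bdot\bdot,2},\tau_{\bdot\bdot\bdot\wdot,2},\tau_{\bdot\bdot\wdot\bdot,2}$ each yield $(a+1)\binom{b+2}{5}$, summing to $3(a+1)\binom{b+2}{5}$; $x\tau_{\bdot\wdot\bdot\bdot,1}$ gives $a(a+1)\binom{b+2}{4}$; $2\tau_{\bdot\bdot\bdot,3}$ gives $2(a+1)\binom{b+1}{5}$; and $x\tau_{\bdot\bdot\wdot,2}$ gives $a(a+1)\binom{b+1}{4}$. The two $\psi_{(3,1^{n-3})}$ pieces give $3\binom{b+2}{3}\psi_{(3,1^{a-2})}$ and $(a-2)b\,\psi_{(3,1^{a-2})}$, while the two $\psi_{(2^2,1^{n-4})}$ pieces give $4\binom{b+1}{2}\psi_{(2^2,1^{a-3})}$ and $(a-3)b\,\psi_{(2^2,1^{a-3})}$, which combine to the two bracketed expressions in the statement. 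No identity beyond $\binom{n}{k}$-recognition is required.

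The proof is essentially bookkeeping rather than mathematics; the main obstacle is the purely clerical one of reading off the lengths $\ell(R)$ and indices $k$ correctly for each of the seven $\tau_{R,k}$ appearing in Lemma~\ref{lemma:ab00-double-torus} and matching them, with their multiplicities and additional $x$-factors, against the six summands in the target formula. A convenient safeguard is to check the identity at $b=1$ and $b=2$ against direct enumeration before writing up the general case.
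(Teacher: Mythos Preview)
Your proposal is correct and follows precisely the route the paper intends: the corollary is stated in the paper without an explicit proof, but it is meant to follow by direct coefficient extraction from Lemma~\ref{lemma:ab00-double-torus} via Corollary~\ref{cor:tau-coefficients} and the elementary identities $[\tfrac{x^{a+1}}{(a+1)!}]x^je^x=(a+1)_j$, exactly as you outline. Your observation that the stated formula tacitly assumes $b\geq 1$ (the $y$-free initial term $[u^4]g_{\bdot}\sum_D\cdots$ from Lemma~\ref{lemma:ab00-double-torus} is dropped) is a useful clarification; one trivial slip is that you list seven $(\ell,k)$ pairs but call them ``six,'' which is harmless.
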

This result is of interest not only for verification purposes, but also as an example, much like Equation (\ref{eqn:ab00-torus-coeff}), of an explicit result that can be obtained in the special case $c=d=0$. Lemma \ref{lemma:ab00-double-torus} can be used to determine the generating series for $(a,b,c,d)$-dipoles on the double torus in the following manner.
\begin{enumerate}
\item
Use Theorem \ref{thm:tau-nonrecursive} and Theorem \ref{thm:sigma-nonrecursive} to write the expressions for $\serB^{(2)}$ (given in Lemma \ref{lemma:ab00-double-torus}) and $\serC^{(1)}$ (given in Theorem \ref{thm:abcd-torus}) in terms of the $g$-type indeterminates.
\item
Determine $C'' \serB^{(2)} + J''\serC^{(1)}$. 
\item
In the expression obtained in the preceding step, replace every instance of $g_{\bdot R} \phi_S$ with $\tau_{R,S}$. 
\item
Set all $f$- and $g$-type indeterminates equal to 1.
\end{enumerate}
The result of applying this process is the expression for $\langle \serC^{(2)}\rangle$ is given in Appendix \ref{sec:appendix-A}. Coefficients may be obtained from this series using the expression for $\tau_{R,S}$ given in Theorem \ref{thm:sigma-nonrecursive}, along with Lemmas \ref{lemma:phi-coefficient-1} and \ref{lem:coefficient-extraction}. Series for surfaces of higher genera may be obtained with the assistance of a computer.

%========================================================================
\section{References}
%========================================================================

\bibliographystyle{amsplain}
\bibliography{dipole-paper}

\providecommand{\bysame}{\leavevmode\hbox to3em{\hrulefill}\thinspace}
\providecommand{\MR}{\relax\ifhmode\unskip\space\fi MR }
% \MRhref is called by the amsart/book/proc definition of \MR.
\providecommand{\MRhref}[2]{%
  \href{http://www.ams.org/mathscinet-getitem?mr=#1}{#2}
}
\providecommand{\href}[2]{#2}
\begin{thebibliography}{10}

\bibitem{ConstableFreedmanHeadrick:2002}
N.~R. Constable, D.~Z. Freedman, M.~Headrick, S.~Minwalla, L.~Motl,
  A.~Postnikov, and W.~Skiba, \emph{{PP}-wave string interactions from
  perturbative {Y}ang-{M}ills theory}, J. High Enegry Phys. \textbf{7} (2002),
  56 pp.

\bibitem{Goulden:1994}
I.~P. Goulden, \emph{A differential operator for symmetric functions and the
  combinatorics of multiplying transpositions}, Trans. Amer. Math. Soc.
  \textbf{344} (1994), 421--440.

\bibitem{GouldenJackson:2007}
I.~P. Goulden and D.~M. Jackson, \emph{Transitive powers of
  {Y}oung-{J}ucys-{M}urphy elements are central}, J. Algebra \textbf{321}
  (2009), no.~7, 1826 -- 1835.

\bibitem{GouldenJacksonVakil:2005}
I.~P. Goulden, D.~M. Jackson, and R.~Vakil, \emph{Towards the geometry of
  double {H}urwitz numbers}, Adv. Math. \textbf{198} (2005), 43 -- 92.

\bibitem{JacksonSloss:2011}
D.~M. Jackson and C.~A. Sloss, \emph{Character-theoretic techniques for
  near-central enumerative problems}, Submitted (2011).

\bibitem{Kwak:1992}
J.~H. Kwak and J.~Lee, \emph{Genus polynomials of dipoles}, Kyungpook Math. J.
  \textbf{33} (1993), 115--125.

\bibitem{KwakShim:2002}
J.~H. Kwak and S.~H. Shim, \emph{Total embedding distributions for bouquets of
  circles}, Discrete Math. \textbf{248} (2002), 93 -- 108.

\bibitem{Strahov:2007}
E.~Strahov, \emph{Generalized characters of the symmetric group}, Adv. Math.
  \textbf{212} (2007), 109--142.

\bibitem{Tutte:1984}
W.~T. Tutte, \emph{Graph {T}heory}, Encyclopedia of Mathematics and its
  Applications, vol.~21, Addison-Wesley, 1984.

\bibitem{VisentinWieler:2007}
T.~I. Visentin and S.~W. Wieler, \emph{On the genus distribution of
  $(p,q,n)$-dipoles}, Electron. J. Combin. \textbf{14} (2007).

\end{thebibliography}

%========================================================================
\pagebreak
\appendix
\section{Coefficients of $\langle \tau_{R,i,j}\rangle$ in the $(a,b,c,d)$-dipole series $\langle \serC^{(2)}\rangle$ on the double torus}
\label{sec:appendix-A}
%========================================================================

 In the tables describing the coefficients of $\langle \serC^{(2)}\rangle$, the following notation is used:
 \[
\begin{array}{rlrl}
D_{3} &:= \sum_{n\geq 3} \psi_{(3,1^{n-3})} \frac{x^n}{n!},
&\quad D_{2,2} &:= \sum_{n\geq 4} \psi_{(2,2,1^{n-4})} \frac{x^n}{n!}, \\
D_3^* &:= \sum_{n\geq 4} (n-3) \psi_{(3,1^{n-3})} \frac{x^n}{n!}, 
&\quad D_{2,2}^* &:= \sum_{n\geq 5} (n-4) \psi_{(2,2,n^{n-4})} \frac{x^n}{n!},
\end{array}
\]
where $\psi_{\lambda}$ is the number of rooted dipoles with face degree sequence $2\lambda$. The coefficients of each $\tau$ function appearing in $\langle \serC^{(2)}\rangle $ are as follows.
\[
\begin{array}{c|l}
\tau\text{-function} & \text{Coefficient in } \langle \serC^{(2)}\rangle\\
\hline
\langle \tau_{\epsilon,0,0} \rangle & wxe^{x} (5\langle \phi_6\rangle + (13+2x)\langle \phi_5\rangle + (11+5x)\langle \phi_4\rangle + (3+4x) \langle \phi_3\rangle + x \langle \phi_2\rangle)\\
& \quad+ 3w(\langle \phi_1\rangle+3\langle \phi_2\rangle + 3\langle \phi_3\rangle + \langle \phi_4 \rangle) D_3+ w(\langle \phi_1\rangle + \langle \phi_2\rangle) (D_3^*+D_{2,2}^*) \\
&\quad+ 4w(\langle \phi_1\rangle + 2\langle \phi_2\rangle + \langle \phi_3\rangle ) D_{2,2} + w[u^4] \sum_{D \in \mathcal{D}} u^{2g(D)} \frac{x^{n(D)}}{n(D)!} \\
\langle \tau_{\wdot,0,0} \rangle & xe^x( \langle \phi_5\rangle + (2+x) \langle \phi_4\rangle +(1+ 2x) \langle \phi_3\rangle)+ 3(\langle \phi_1\rangle +2\langle \phi_2\rangle + \langle \phi_3\rangle) D_3  \\
&\quad + 4(\langle \phi_1\rangle + \langle \phi_2\rangle) D_{2,2} \\
\langle \tau_{\bdot,0,0} \rangle & wxe^{x} (5\langle \phi_6\rangle + (12+2x)\langle \phi_5\rangle + (9+4x)\langle \phi_4\rangle + (2+2x)\langle \phi_3\rangle + x \langle \phi_2\rangle) \\
& \quad+3w(\langle \phi_2\rangle+ 2\langle \phi_3\rangle + \langle \phi_4\rangle) D_3 + w(\langle \phi_1\rangle + \langle \phi_2\rangle) (D_3^*+D_{2,2}^*)\\
& \quad+4w(\langle \phi_2\rangle + \langle \phi_3\rangle) D_{2,2} \\
\langle \tau_{\wdot\wdot,0,0} \rangle & 3(\langle \phi_1\rangle+\langle \phi_2\rangle) D_3 + 4\langle \phi_1\rangle D_{2,2} + D_3^* + D_{2,2}^*\\
\langle \tau_{\wdot\wdot,0,1} \rangle & wx^2e^x( \langle \phi_1\rangle + \langle \phi_2\rangle)+ w(D_3^*+ D_{2,2}^*) \\
\langle \tau_{\wdot\wdot,0,2} \rangle & w^2x^2e^x (\langle \phi_1\rangle+ \langle \phi_2\rangle) \\
\langle \tau_{\wdot\wdot,0,3} \rangle & w^3xe^x \\
\langle \tau_{\wdot\wdot,0,4} \rangle & 2w^4xe^x \\
\langle \tau_{\wdot\wdot,0,5} \rangle & w^5xe^x \\
\langle \tau_{\wdot\wdot,1,1} \rangle & wx^2e^x(1 + \langle \phi_1\rangle) \\
\langle \tau_{\wdot\wdot,1,2} \rangle & w^2x^2e^x(1+ \langle \phi_1\rangle) \\
\langle \tau_{\wdot\wdot,2,1} \rangle & wxe^x(1+ \langle \phi_1\rangle) + wx^2e^x \\
\langle \tau_{\wdot\wdot,2,2} \rangle & w^2xe^x(1+ \langle \phi_1\rangle) + w^2x^2e^x \\
\langle \tau_{\wdot\wdot,3,1} \rangle & 2wxe^x \\
\langle \tau_{\wdot\wdot,3,2} \rangle & 2w^2xe^x \\
\langle \tau_{\wdot\bdot,0,0} \rangle & xe^x (\langle \phi_4\rangle + (1+x)\langle \phi_3\rangle) + \langle \phi_1\rangle (D_3^* + D_{2,2}^*)\\
\langle \tau_{\bdot\wdot,0,0} \rangle & wxe^x (\langle \phi_5\rangle + (2+x) \langle \phi_4\rangle + (1+2x) \langle \phi_3\rangle) + 3w(\langle \phi_2\rangle+ \langle \phi_3\rangle) D_3 \\
& \quad+ 4w\langle \phi_2 \rangle D_{2,2} \\
\langle \tau_{\bdot\bdot,0,0} \rangle & wxe^{x} (4\langle \phi_5\rangle + (5+x) \langle \phi_4\rangle + \langle \phi_3\rangle + x \langle \phi_2\rangle) \\
\langle \tau_{\bdot\bdot,0,1} \rangle & w^2xe^x (\langle \phi_2\rangle+2\langle \phi_3\rangle) + w^2(D_3+ D_{2,2}) \\
\langle \tau_{\bdot\bdot,0,2} \rangle & 2w^3xe^x(\langle \phi_1\rangle + 2\langle \phi_2 \rangle+2\langle \phi_3\rangle)+ w^3 (D_3+ D_{2,2})  \\
\langle \tau_{\bdot\bdot,0,3} \rangle & 2w^4xe^x (\langle \phi_1\rangle+ \langle \phi_2\rangle) \\
\langle \tau_{\bdot\bdot,0,4} \rangle & 4w^5(e^x-1) \\
\langle \tau_{\bdot\bdot,0,5} \rangle & 9w^6(e^x-1) \\
\langle \tau_{\bdot\bdot,0,6} \rangle & 5w^7(e^x-1) \\
\langle \tau_{\bdot\bdot,1,1} \rangle & w^2xe^x \langle \phi_2\rangle \\
\langle \tau_{\bdot\bdot,1,2} \rangle & (2+ 2\langle \phi_1\rangle + \langle \phi_2\rangle )w^3xe^x \\
\langle \tau_{\bdot\bdot,1,3} \rangle & 2w^4xe^x(1 + \langle \phi_1\rangle) \\
\langle \tau_{\bdot\bdot,2,2} \rangle & 2w^3xe^x \\
\langle \tau_{\bdot\bdot,2,3} \rangle & 2w^4xe^x 
\end{array}
\]

\pagebreak

\[
\begin{array}{cc}
\begin{array}{c|l}
\tau\text{-function} & \text{Coefficient in } \langle \serC^{(2)}\rangle\\
\hline
\langle \tau_{\wdot\wdot\wdot,0,0} \rangle & 3w\langle \phi_1\rangle D_3 + 4D_{2,2} \\
\langle \tau_{\wdot\wdot\wdot,0,1} \rangle & 4wD_{2,2} \\
\langle \tau_{\wdot\wdot\wdot,1,0} \rangle & x^2e^x \langle \phi_1\rangle \\
\langle \tau_{\wdot\wdot\wdot,1,1} \rangle & wx^2e^x \langle \phi_1 \rangle \\
\langle \tau_{\wdot\wdot\wdot,2,0} \rangle & x^2e^x + xe^x \langle \phi_1\rangle \\
\langle \tau_{\wdot\wdot\wdot,2,1} \rangle & wx^2e^x + (2+ \langle \phi_1\rangle) wxe^x\\
\langle \tau_{\wdot\wdot\wdot,2,2} \rangle & 2w^2xe^x \\
\langle \tau_{\wdot\wdot\wdot,3,0} \rangle & 2xe^x \\
\langle \tau_{\wdot\wdot\wdot,3,1} \rangle & 2wxe^x \\
\langle \tau_{\wdot\wdot\bdot,0,0} \rangle & x^2e^x \langle \phi_2\rangle \\
\langle \tau_{\wdot\wdot\bdot,0,1} \rangle & wx^2e^x \langle \phi_2\rangle \\
\langle \tau_{\wdot\wdot\bdot,0,3} \rangle & w^3xe^x \\
\langle \tau_{\wdot\wdot\bdot,0,4} \rangle & w^4xe^x \\
\langle \tau_{\wdot\bdot\wdot,0,1} \rangle & wxe^x\langle \phi_2\rangle \\
\langle \tau_{\wdot\bdot\wdot,0,2} \rangle & w^2xe^x \langle \phi_2\rangle \\
\langle \tau_{\wdot\bdot\wdot,1,1} \rangle & 2wxe^x\langle \phi_1\rangle \\
\langle \tau_{\wdot\bdot\wdot,1,2} \rangle & w^2xe^x \langle \phi_1\rangle \\
\langle \tau_{\wdot\bdot\wdot,2,1} \rangle & wxe^x \\
\langle \tau_{\wdot\bdot\wdot,2,2} \rangle & w^2xe^x \\
\langle \tau_{\bdot\wdot\wdot,0,0} \rangle& 3w\langle \phi_2\rangle D_3 + 4w\langle \phi_1\rangle D_{2,2}\\
\langle \tau_{\wdot\bdot\bdot,0,0} \rangle &  xe^{x} (\langle \phi_3\rangle + x \langle \phi_2\rangle) \\
\langle \tau_{\bdot\wdot\bdot,0,0} \rangle & wxe^x (\langle \phi_4\rangle +(1+ x)\langle \phi_3\rangle) \\
\langle \tau_{\bdot\wdot\bdot,0,1} \rangle & w^2xe^x \langle \phi_2\rangle \\
\langle \tau_{\bdot\wdot\bdot,0,2} \rangle & w^3xe^x \langle \phi_2\rangle \\
\langle \tau_{\bdot\wdot\bdot,1,1} \rangle & 2w^2xe^x \langle \phi_1\rangle \\
\langle \tau_{\bdot\wdot\bdot,1,2} \rangle & 2w^3xe^x\langle \phi_1\rangle \\
\langle \tau_{\bdot\wdot\bdot,2,1} \rangle & w^2xe^x \\
\langle \tau_{\bdot\wdot\bdot,2,2} \rangle & w^3xe^x \\
\langle \tau_{\bdot\bdot\wdot,0,0} \rangle & wxe^x (\langle \phi_4\rangle + (1+x)\langle \phi_3 \rangle) \\
\langle \tau_{\bdot\bdot\wdot,1,0} \rangle & wxe^x\langle \phi_2 \rangle \\
\langle \tau_{\bdot\bdot\wdot,1,1} \rangle & w^2 xe^x (\langle \phi_1 \rangle + \langle \phi_2 \rangle) \\
\langle \tau_{\bdot\bdot\wdot,1,2} \rangle & w^3 xe^x \langle \phi_1 \rangle \\
\langle \tau_{\bdot\bdot\wdot,2,1} \rangle & w^2xe^x \\
\langle \tau_{\bdot\bdot\wdot,2,2} \rangle & w^3xe^x \\
\langle \tau_{\bdot\bdot\bdot,0,0} \rangle & wxe^x (2\langle \phi_3\rangle + 3\langle \phi_4 \rangle) \\
\langle \tau_{\bdot\bdot\bdot,0,1} \rangle & w^2xe^x( 3\langle \phi_2\rangle+ 2\langle \phi_3\rangle) \\
\langle \tau_{\bdot\bdot\bdot,0,2} \rangle & 3w^3xe^x \langle \phi_2\rangle \\
\langle \tau_{\bdot\bdot\bdot,0,4} \rangle & 10w^5(e^x-1) \\
\langle \tau_{\bdot\bdot\bdot,0,5} \rangle & 8w^6(e^x-1) 
\end{array} 

& 

\begin{array}{c|l}
\tau\text{-function} & \text{Coefficient in } \langle \serC^{(2)}\rangle\\
\hline
\langle \tau_{\wdot\wdot\wdot\wdot,0,0} \rangle & 3D_3 \\
\langle \tau_{\wdot\wdot\wdot\wdot,0,1} \rangle & 3wD_3 \\
\langle \tau_{\wdot\wdot\wdot\wdot,1,0} \rangle & x^2e^x \\
\langle \tau_{\wdot\wdot\wdot\wdot,2,0} \rangle & 3xe^x \\
\langle \tau_{\wdot\wdot\wdot\wdot,1,1} \rangle & wx^2 e^x \\
\langle \tau_{\wdot\wdot\wdot\wdot,2,1} \rangle & 3wxe^x \\
\langle \tau_{\wdot\wdot\bdot\wdot,0,0} \rangle & x^2e^x\langle \phi_1\rangle \\
\langle \tau_{\wdot\wdot\bdot\wdot,0,1} \rangle & wx^2e^x \langle \phi_1\rangle \\
\langle \tau_{\wdot\wdot\bdot\wdot,1,1} \rangle & wxe^x \\
\langle \tau_{\wdot\wdot\bdot\wdot,1,2} \rangle & w^2xe^x \\
\langle \tau_{\wdot\wdot\bdot\bdot,0,2} \rangle & xw^2e^x \\
\langle \tau_{\wdot\wdot\bdot\bdot,0,3} \rangle & w^3xe^x \\
\langle \tau_{\wdot\bdot\wdot\wdot,1,0} \rangle & xe^x \langle \phi_1\rangle \\
\langle \tau_{\wdot\bdot\wdot\wdot,1,1} \rangle & wxe^x(1 + \langle \phi_1\rangle ) \\
\langle \tau_{\wdot\bdot\wdot\wdot,1,2} \rangle & w^2xe^x \\
\langle \tau_{\wdot\bdot\wdot\bdot,0,0} \rangle & xe^x\langle \phi_2\rangle \\
\langle \tau_{\wdot\bdot\wdot\bdot,0,1} \rangle & wxe^x \langle \phi_2\rangle \\
\langle \tau_{\wdot\bdot\bdot\wdot,0,1} \rangle & wxe^x\langle \phi_1\rangle \\
\langle \tau_{\wdot\bdot\bdot\wdot,0,2} \rangle & w^2xe^x \langle \phi_1\rangle \\
\langle \tau_{\bdot\wdot\wdot\wdot,0,0} \rangle & 3\langle \phi_1 \rangle D_3 \\
\langle \tau_{\bdot\wdot\wdot\bdot,1,1} \rangle & w^2xe^x \\
\langle \tau_{\bdot\wdot\wdot\bdot,1,2} \rangle & w^3xe^x \\
\langle \tau_{\bdot\wdot\bdot\wdot,1,0} \rangle & wxe^x \langle \phi_1\rangle \\
\langle \tau_{\bdot\wdot\bdot\wdot,1,1} \rangle & w^2xe^x \langle \phi_1\rangle \\
 \langle \tau_{\bdot\wdot\bdot\bdot,0,0} \rangle & wxe^{x}(\langle \phi_3 \rangle + (1+x)\langle \phi_2\rangle) \\
 \langle \tau_{\bdot\wdot\bdot\bdot,0,1} \rangle & w^2xe^x (\langle \phi_1 \rangle +\langle \phi_2\rangle) \\
 \langle \tau_{\bdot\wdot\bdot\bdot,0,2} \rangle & w^3 xe^x \langle \phi_1\rangle \\
 \langle \tau_{\bdot\bdot\wdot\wdot,1,1} \rangle & w^2xe^x \\
 \langle \tau_{\bdot\bdot\wdot\wdot,1,2} \rangle & w^3 xe^x \\
 \langle \tau_{\bdot\bdot\wdot\bdot,0,0} \rangle & wxe^x (\langle \phi_2 \rangle + \langle \phi_3 \rangle) \\
 \langle \tau_{\bdot\bdot\wdot\bdot,0,1} \rangle & w^2xe^x \langle \phi_1 \rangle \\
 \langle \tau_{\bdot\bdot\wdot\bdot,0,2} \rangle & w^3 xe^x \langle \phi_1\rangle \\
 \langle \tau_{\bdot\bdot\bdot\wdot,0,0} \rangle & wxe^x \langle \phi_3 \rangle \\
 \langle \tau_{\bdot\bdot\bdot\wdot,0,1} \rangle & w^2xe^x(\langle \phi_1\rangle + \langle \phi_2\rangle) \\
 \langle \tau_{\bdot\bdot\bdot\wdot,0,2} \rangle & w^3xe^x \langle \phi_1\rangle \\
 \langle \tau_{\bdot\bdot\bdot\bdot,0,3} \rangle & 8w^4(e^x-1)  \\
 \langle \tau_{\bdot\bdot\bdot\bdot,0,4} \rangle & 6w^5(e^x-1) 
 \end{array}
 
 \end{array}
 \]

\end{document}